%!TEX TS-program =  pdflatex 

%Changes since submission to SLC are marked with:
%SinceSLC:

\documentclass{amsart}
\usepackage{graphicx,verbatim, amsmath, amssymb, amsthm, amsfonts, epsfig, ifthen,mathtools,mathabx,enumerate}%,epstopdf}	
%\epstopdfsetup{suffix=}
%\DeclareGraphicsExtensions{.ps}
%\DeclareGraphicsRule{.ps}{pdf}{.pdf}{`ps2pdf -dEPSCrop -dNOSAFER #1 \noexpand\OutputFile}

\newtheorem{proposition}{Proposition}[section]
\newtheorem{theorem}[proposition]{Theorem}

\newtheorem{lemma}[proposition]{Lemma}
\newtheorem{prop}[proposition]{Proposition}
\newtheorem{cor}[proposition]{Corollary}

\theoremstyle{definition}

\theoremstyle{remark}
\newtheorem{remark}[proposition]{Remark}

\numberwithin{equation}{section}
\usepackage[usenames]{color}

% commands for marginal notes below
% to make a marginal note, insert
% \margin{Your comment here.} in the text.
% Things will start to look ugly if you have more than 99
%  marginal notes.
%  This uses the command \marginpar
%  defined, I think, in verbatim
%  The circled numbers will screw up the formatting slightly.

% Sometimes, if you terminate a run of LaTeX with ``X" while using this macro, the next time you compile you will get the error "! File ended while scanning use of \@newl@bel.". The solution is to delete the .aux file (and fix whatever made you abort the run in the first place) and run LaTeX again.

\usepackage[usenames]{color}

% to set color for comment numbers, use any of the 68 colors from 
% the color package in the command below

% control the width of your comments
\addtolength{\marginparwidth}{3mm}

\newcounter{margincounter}
\setcounter{margincounter}{1}

%  to remove marginal notes, uncomment the following:
%  \renewcommand{\margin}[1]{}
%  to remove just the circled numbers in the text, uncomment the following:
%  \renewcommand{\marginnum}{}
%  For final versions of a paper, it's probably best to remove all the \margin
%  commands.  Much to my annoyance, they mess up the typesetting.

% This is for setting off words we define in a separate typeface.
\newcommand{\newword}[1]{\textbf{\emph{#1}}}

\newcommand{\integers}{\mathbb Z}
\newcommand{\rationals}{\mathbb Q}

\newcommand{\reals}{\mathbb R}

\newcommand{\cl}{\operatorname{cl}}
\newcommand{\cw}{\operatorname{cw}}
\newcommand{\ccw}{\operatorname{ccw}}
\newcommand{\sgn}{\operatorname{sgn}}
\newcommand{\vsgn}{\mathbf{sgn}}

\newcommand{\set}[1]{{\left\lbrace #1 \right\rbrace}}

\newcommand{\A}{{\mathcal A}}

\newcommand{\F}{{\mathcal F}}

\renewcommand{\th}{^\mathrm{th}}
\newcommand{\nd}{^\mathrm{nd}}

\newcommand{\1}{{\hat{1}}}

\newcommand{\g}{\mathbf{g}}
\renewcommand{\d}{\mathbf{d}}
\renewcommand{\b}{\mathbf{b}}
\renewcommand{\k}{\mathbf{k}}
\renewcommand{\a}{\mathbf{a}}
\newcommand{\e}{\mathbf{e}}
\newcommand{\x}{\mathbf{x}}

\renewcommand{\t}{\mathbf{t}}
\renewcommand{\v}{\mathbf{v}}
\renewcommand{\u}{\mathbf{u}}
\newcommand{\s}{\mathbf{s}}
\newcommand{\w}{\mathbf{w}}
\newcommand{\tB}{\tilde{B}}

\newcommand{\M}{\mathcal{M}}

\renewcommand{\S}{\mathbf{S}}
\renewcommand{\M}{\mathbf{M}}
\renewcommand{\r}{\mathtt{r}}
\renewcommand{\t}{\mathtt{t}}
\renewcommand{\c}{\mathtt{c}}

\title{Universal geometric coefficients for the once-punctured torus}
\author{Nathan Reading}
%\date{}                                           % Activate to display a given date or no date
\thanks{This material is based upon work partially supported by the Simons Foundation under Grant Number 209288 and by the National Science Foundation under Grant Number DMS-1101568.}
\subjclass[2010]{13F60, 57Q15}

\begin{document}

\begin{abstract}
We construct universal geometric coefficients, over $\integers$, $\rationals$, and $\reals$, for cluster algebras arising from the once-punctured torus.
We verify that the once-punctured torus has a property called the Null Tangle Property.
The universal geometric coefficients over $\integers$ and $\rationals$ are then given by the shear coordinates of certain ``allowable'' curves in the torus.
The universal geometric coefficients over $\reals$ are given by the shear coordinates of allowable curves together with the normalized shear coordinates of certain other curves each of which is dense in the torus.
We also construct the mutation fan for the once-punctured torus and recover a result of N\'{a}jera on $\g$-vectors.
\end{abstract}
\maketitle

\setcounter{tocdepth}{1}
\tableofcontents

\section{Introduction}\label{intro}
Given an exchange matrix $B$, a universal geometric cluster algebra for $B$ is a cluster algebra that is universal, in the sense of coordinate specialization, among cluster algebras of geometric type (broadly defined in the sense of \cite{universal}) associated to $B$.
A universal geometric cluster algebra is specified by an extended exchange matrix (again broadly defined) whose coefficient rows are called the universal geometric coefficients for $B$.
The polyhedral geometry and ``mutation-linear algebra'' of these universal geometric coefficients is worked out in \cite{universal}.
In the case where $B$ arises from a marked surface, universal geometric coefficients and the closely related mutation fans can be approached through a variant of the laminations that appear in \cite{cats2}.
In \cite{unisurface}, this approach yields a construction of the mutation fan for all marked surfaces except the once-punctured surfaces without boundary, and a construction of universal geometric coefficients for a smaller family of surfaces.

In this paper, we carry out the construction of universal geometric coefficients and the mutation fan for the simplest case not handled in \cite{unisurface}:  the once-punctured torus.
The essential step in the constructions of this paper is to establish the Null Tangle Property for the once-punctured torus.
The Null Tangle Property implies that the universal geometric coefficients (over $\integers$ or $\rationals$) are given by the shear coordinates of certain curves.
Universal geometric coefficients over $\rationals$ were already known to exist, by a non-constructive proof \cite[Corollary~4.7]{universal}, but even the existence of universal geometric coefficients over $\integers$ was not previously known for the once-punctured torus.
Our proof of the Null Tangle Property uses tools that are not surprising in the context of the once-punctured torus.
Similar ideas have appeared, for example, in \cite{BonahonZhu,FrohGel,Minsky,Najera,Series}, and our treatment draws on ideas from these earlier works.
We realize shear coordinates of curves in the once-punctured torus in terms of Farey triples.

We take the results for the once-punctured torus farther than the results for other surfaces in \cite{unisurface} by also computing universal geometric coefficients over~$\reals$.
The universal geometric coefficients over $\reals$ are given by the (normalized) shear coordinates of a larger collection of curves, some of which are dense in the torus.

The following theorem gives the universal geometric coefficients explicitly.
\begin{theorem}\label{markov main}
Let $B=\begin{bsmallmatrix*}[r]0&2&-2\\-2&0&2\\2&-2&0\\\end{bsmallmatrix*}$.
A universal extended exchange matrix for $B$ over $\integers$ or $\rationals$ has the following coefficient rows:
\begin{enumerate}
\item \label{markov gs}
The three cyclic permutations of $[1-b,\,a+1,\,b-a-1]$, for all (possibly infinite) positive slopes with standard form $b/a$.
\item \label{markov opp gs}
The three cyclic permutations of $[-1-b,\,a-1,\,b-a+1]$, for all finite nonnegative slopes with standard form $b/a$.
\item \label{markov equatorial}
All integer vectors $[x,\,y,\,z]$ with $x+y+z=0$ such that $x$, $y$, and $z$ have no common factors.
\end{enumerate}
A universal extended exchange matrix for $B$ over $\reals$ has the coefficient rows described in \eqref{markov gs} and \eqref{markov opp gs} above, as well as 
\begin{enumerate}
\item[($3'$)] \label{extra equatorial}
Exactly one nonzero vector in $\rho$ for each ray $\rho$ contained in the plane given by $x+y+z=0$.
\end{enumerate}
\end{theorem}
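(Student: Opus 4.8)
The plan is to deduce Theorem~\ref{markov main} from the Null Tangle Property for the once-punctured torus, together with the mutation-linear machinery of \cite{universal,unisurface}. I would begin by fixing the triangulation $T_0$ of the once-punctured torus whose signed adjacency matrix is $B$, and setting up the dictionary between curves and their shear coordinate vectors $\b^T(\gamma)$ relative to a triangulation $T$. The organizing device is the Farey tessellation: simple closed curves correspond to Farey vertices, i.e.\ to slopes in $\rationals\cup\{\infty\}$; triangulations of the torus correspond to Farey triangles; mutations correspond to Farey flips; and each slope acquires a canonical standard form $b/a$ from the path in the Farey tessellation running from $T_0$ toward that slope. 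Tracing the relevant curves through $T_0$ should then produce the vectors claimed in the theorem: the allowable curves that spiral into the puncture contribute the cyclic permutations of $[1-b,\,a+1,\,b-a-1]$ and of $[-1-b,\,a-1,\,b-a+1]$, the two families differing by the handedness of the spiral, which also accounts for the $\pm 1$ in the coordinate sums and for the asymmetric roles of the boundary slopes $0$ and $\infty$; the remaining allowable curves, including the simple closed curves of every rational slope, have shear coordinates ranging over the primitive integer vectors in the plane $x+y+z=0$; and the irrational-slope geodesics, which are dense in the torus, have normalized shear coordinates filling in the remaining rays of that plane. This step is essentially bookkeeping with the Farey tessellation, but it requires care with the tagging conventions at the puncture and with the normalization used for the dense curves.

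The heart of the proof is the Null Tangle Property over $\integers$ and $\rationals$: that the shear coordinate vectors of any finite collection of distinct allowable curves admit no nontrivial mutation-linear relation. I would argue by contradiction. Given allowable curves $\gamma_1,\dots,\gamma_k$ and a supposed relation $\sum_i c_i\,\b^T(\gamma_i)=0$ holding for every triangulation $T$ in the mutation class of $T_0$, with some $c_j\neq 0$, I would choose a slope $s$ that is extremal for the collection --- realized by $\gamma_j$ and not shared by a curve whose shear coordinates could cancel those of $\gamma_j$ --- and then mutate $T_0$ along a long sequence of Farey flips converging to $s$. Along such a sequence the shear coordinates of $\gamma_j$ grow in a controlled spiral pattern that eventually dominates, or is linearly independent from, those of every other $\gamma_i$, so that $\sum_i c_i\,\b^T(\gamma_i)$ cannot vanish unless $c_j=0$, a contradiction. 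Making this precise is where intersection-number estimates and the combinatorics of the modular group acting on the Farey tessellation enter, in the spirit of \cite{BonahonZhu,Minsky,Najera,Series}. I expect this to be the main obstacle: one must treat simple closed curves, non-simple closed curves, and curves spiraling into the puncture uniformly and rule out all accidental cancellations --- precisely the difficulty that kept the once-punctured torus out of reach of the general arguments in \cite{unisurface}.

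Granting the Null Tangle Property, the $\integers$ and $\rationals$ cases follow from \cite{unisurface}: the Null Tangle Property implies that the universal geometric coefficients over $\integers$ (respectively $\rationals$) are exactly the shear coordinates of the allowable curves, taken up to positive scaling, and the computation above identifies these with the vectors listed in \eqref{markov gs}, \eqref{markov opp gs}, and \eqref{markov equatorial}. For the $\reals$ case I would use the correspondence from \cite{universal} between universal $\reals$-bases and the rays of the mutation fan. The rays coming from \eqref{markov gs} and \eqref{markov opp gs} lie in the planes $x+y+z=\pm 1$ and are rays of full-dimensional cones, so they are forced in over $\reals$ exactly as over $\rationals$. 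Inside the plane $x+y+z=0$, the Null Tangle Property applied to the simple closed curves, together with the density of the rational slopes and a limiting argument as rational slopes approach an irrational one, should show that the mutation fan, restricted to this plane, has every ray of the plane as a maximal cone; hence a universal $\reals$-basis requires exactly one nonzero vector on each such ray, which is ($3'$). The shear coordinates computed above supply such representatives on the rational rays, and the normalized shear coordinates of the dense geodesics supply them on the irrational rays, completing the proof.
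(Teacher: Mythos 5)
Your overall architecture matches the paper's: explicit shear coordinates with respect to a fixed triangulation $T_0$, the Null Tangle Property, the basis criterion of Theorem~\ref{basis univ}, and (for the $\reals$ case) the mutation fan together with normalized shear coordinates of dense geodesics. The explicit computations you sketch in your first paragraph and the reductions in your last paragraph are essentially what the paper does. The gap is in the middle step, and you have correctly located it without crossing it. Your proposed argument for the Null Tangle Property --- mutate toward an extremal slope until the extremal curve's shear coordinates dominate or separate from the others --- is exactly the paper's argument for the curves spiraling into the puncture (Proposition~\ref{null tangle lines only}: one finds a triangulation $T$ and an arc $\gamma$ with $b_\gamma(T,\lambda)>0$ while every other curve of the tangle has nonpositive $\gamma$-coordinate, and invokes Proposition~\ref{one positive or one negative}). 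But this separation strategy fails for the closed curves, which are the hard case. Their shear coordinates all lie in the two-dimensional plane $x+y+z=0$, which is preserved by every mutation map, and a closed curve of \emph{rational} slope $q$ cannot be isolated: the curves with a strictly positive coordinate at a given arc of $T$ are those whose slopes lie in an \emph{open} interval with a vertex slope of $T$ as an endpoint, and there are only finitely many Farey edges straddling a given rational $q$ (this is the content of Lemmas~\ref{slope ineq lemma} and~\ref{diophantine lemma}, and is precisely why Lemma~\ref{tech lemma} requires the separating slope to be irrational --- so the separation argument does work for the dense curves in the real case, but not for $\cl(a,b)$).

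What the closed curves require is a genuinely different, mutation-linear argument, and your proposal does not supply one. The paper passes to the $B$-coherent linear relation $c\v+\sum_i c_i\v_i$ among vectors in the plane $x+y+z=0$, partitions that plane into cones $D_j$ that the piecewise-linear maps $\eta_{12}^B$ and $\eta_{21}^B$ permute by $D_j\mapsto D_{j\mp2}$ while fixing $\v=[1,\,-1,\,0]$, and runs an inductive telescoping construction showing that each block sum $\w_j$ equals $\u_j-\u_{j-1}$ for vectors $\u_j$ confined to prescribed boundary lines; the relation then collapses to $c\v=\mathbf{0}$. Crucially this uses \emph{both} defining conditions of $B$-coherence, the linear condition \eqref{linear eta} and the componentwise-minimum condition \eqref{piecewise eta}. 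No ``domination'' or linear-independence heuristic can substitute here: any three or more vectors in a two-dimensional plane are linearly dependent, so only the piecewise-linear min condition distinguishes a null tangle of closed curves from a trivial linear dependency. Until this step is supplied, the Null Tangle Property, and with it both halves of the theorem, remains unproved.
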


The mutation fan for $B=\begin{bsmallmatrix*}[r]0&2&-2\\-2&0&2\\2&-2&0\\\end{bsmallmatrix*}$ is described in Theorem~\ref{torus FB} and illustrated in Figure~\ref{torus FB fig}.
\begin{figure}[ht]
\scalebox{0.999}{\includegraphics{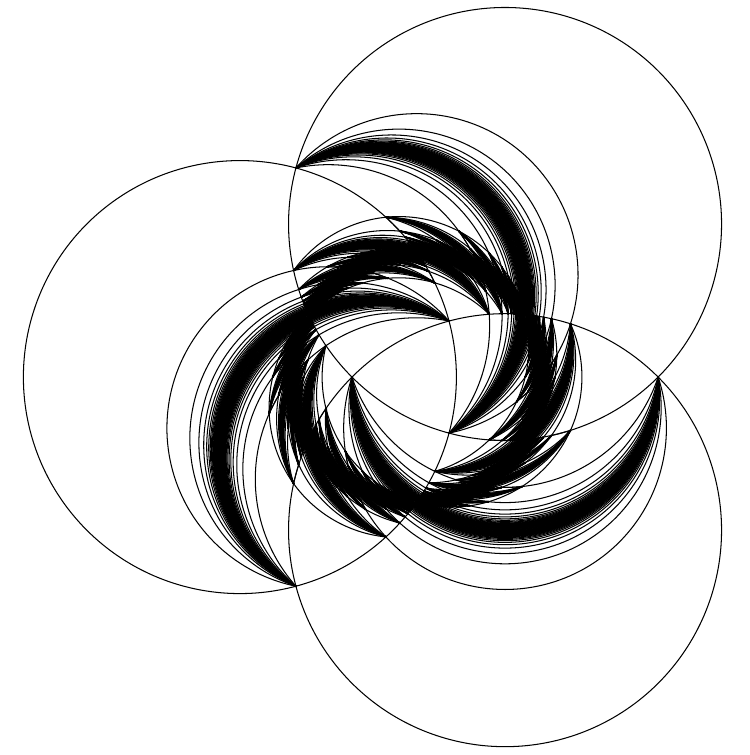}}
\begin{picture}(0,0)(0,-181)
\put(26.5,-9){\small$\e_2$}
\put(26.5,30){\small$\e_3$}
\put(-7,10.5){\small$\e_1$}
\end{picture}
\caption{The mutation fan for the once-punctured torus}
\label{torus FB fig}
\end{figure}
The figure is interpreted as follows:
Each cone of the mutation fan intersects the unit sphere about the origin in a point, circular arc, or spherical triangle.
The figure pictures the images of these points, arcs, and triangles under a stereographic projection of the unit sphere to the plane.
The projections of the standard basis vectors $\e_1$, $\e_2$, and $\e_3$ are indicated.

As a corollary (Corollary~\ref{markov g}) to Theorem~\ref{torus FB}, we recover a result of N\'{a}jera \cite{Najera} describing the $\g$-vectors of cluster variables for~$B^T$.
N\'{a}jera's result was the starting point for our investigation of the mutation fan for the once-punctured torus.
In two short sections at the end of the paper, we briefly discuss denominator vectors, and then briefly discuss how the results of this paper have been extended to the four-punctured sphere \cite{unisphere} and might extend to other tubular cluster algebras in the sense of \cite{tubular1,tubular2}.

\section{Universal geometric coefficients}
In this section, we briefly review background on universal geometric cluster algebras.
Rather than give complete details, we move quickly to reframe the problem of constructing universal geometric coefficients as the problem of finding a basis, and then quote results that describe how to find a basis in the case of surfaces.
We also introduce the closely related notion of the mutation fan.
Details can be found in \cite{universal}.
Additional background on geometric cluster algebras is found for example in \cite{ca4}, but we work with the broader definition originated in~\cite{universal}.

We choose an \newword{underlying ring} $R$ to be $\integers$, $\rationals$, or $\reals$, fix an integer $n>0$, and let $I$ be an indexing set of arbitrary cardinality.
An  \newword{extended exchange matrix} $\tB$ is a collection of vectors in $R^n$ called the \newword{rows} of $\tB$, indexed by the disjoint union $[n]\cup I$.
We think of $\tB$ as an ``$(n+|I|)\times n$ matrix,'' although since $I$ may be infinite or even uncountable, it may not be a matrix in the usual sense.
However, the rows of $\tB$ indexed by $[n]$ constitute an $n\times n$ matrix denoted by $B$.
This \newword{exchange matrix} $B$ is in general required to have integer entries and to be skew-symmetrizable, but the matrix under consideration in this paper has a stronger property than skew-symmetrizability:  it is skew-symmetric in the usual sense.
The rows of $\tB$ indexed by $I$ are called \newword{coefficient rows}.

An extended exchange matrix defines a \newword{cluster algebra of geometric type}, denoted by $\A_R(\tB)$.
The details of the definition are in \cite{universal}; we briefly sketch it here.
One takes the coefficient semifield to be a tropical semifield over tropical variables indexed by $I$, with exponents on the tropical variables taking values in $R$, rather than only in $\integers$.
One then uses $\tB$ to define initial coefficients just as in the \emph{geometric type} construction from \cite{ca4} and takes $B$ and these coefficients as the input to the general construction of cluster algebras.
The cluster algebra $\A_R(\tB)$ may not be of geometric type in the sense of \cite{ca4}, for two reasons:  
First, there may be infinitely many coefficient rows of $\tB$ (or equivalently infinitely many tropical variables), and second, the coefficient rows of $\tB$ may have non-integer entries.
In \cite[Definition~2.8]{universal}, the definition of cluster algebras of geometric type is broadened to include exactly the cluster algebras of the form $\A_R(\tB)$ as described here.

For those less familiar with cluster algebras:
A \newword{seed} is a pair $(\tB,\x)$ consisting of an extended exchange matrix $\tB$ and an $n$-tuple $\x=(x_1,\ldots,x_n)$, called a \newword{cluster}.
The $x_i$ are rational functions called \newword{cluster variables}.
Given a seed and an index $k\in[n]$, there is a notion of \newword{seed mutation} that exchanges $(\tB,\x)$ for a new seed $\mu_k(\tB,\x)=(\tB',\x')$.
The matrix $\tB'$ is obtained from $\tB$ by matrix mutation, which we define below.
Writing $x'_1,\ldots,x'_n$ for the cluster variables in the new cluster, we have $x'_j=x_j$ for each $j\neq k$, while $x'_k$ and $x_k$ are related by an \newword{exchange relation} which writes $x'_k$ as a rational function of the cluster variables in $\x$, with coefficients given by the coefficient rows of $\tB$.
We perform all possible sequences of mutations to obtain a (typically infinite) collection of seeds.
Each cluster variable in each seed arising in this way is a rational function in the original cluster variables $x_1,\ldots,x_n$.
The cluster algebra $\A_R(\tB)$ is the algebra (i.e.\ the subalgebra of the field of rational functions) generated by all of the cluster variables in all of the seeds.

Matrix mutation is defined as follows.
Given an extended exchange matrix $\tB$, we write $b_{ij}$ for the $j\th$ entry in the row of $\tB$ indexed by $i$.
Thus $i$ is in the disjoint union $[n]\cup I$ and $j$ is in $[n]$.
For $k\in[n]$, the mutation of $\tB$ at index $k$ is the matrix $\mu_k(\tB)=\tB'$ with entries $b'_{ij}$ given by
\begin{equation}\label{b mut}
b_{ij}'=\left\lbrace\!\!\begin{array}{ll}
-b_{ij}&\mbox{if }i=k\mbox{ or }j=k;\\
b_{ij}+\sgn(b_{kj})\,[b_{ik}b_{kj}]_+&\mbox{otherwise,}
\end{array}\right.
\end{equation}
where $\sgn(0)=0$ and $\sgn(a)=a/|a|$ for $a\neq 0$.
We are interested in sequences of mutations, and we establish notation for such sequences.
Given a sequence $\k=k_q,\ldots,k_1$ of integers in~$[n]$, the notation $\mu_\k$ stands for $\mu_{k_q}\circ\mu_{k_{q-1}}\circ\cdots\circ\mu_{k_1}$.

Suppose $\tB$ and $\tB'$ are extended exchange matrices having the same exchange matrix $B$.
A \newword{coefficient specialization} from $\A_R(\tB)$ to $\A_R(\tB')$ is a ring homomorphism taking each cluster variable in $\A_R(\tB)$ to the corresponding cluster variable in $\A_R(\tB')$, taking the coefficients in each exchange relation to the corresponding coefficients, and satisfying some additional technical requirements.
A cluster algebra $\A_R(\tB)$ is a \newword{universal geometric cluster algebra} over $R$ for the exchange matrix $B$ if, for any other geometric cluster algebra $\A_R(\tB')$ with exchange matrix $B$, there is a unique coefficient specialization from $\A_R(\tB)$ to $\A_R(\tB')$.
In this case, $\tB$ is called a \newword{universal extended exchange matrix} over $R$ and the coefficient rows of $\tB$ are called \newword{universal geometric coefficients} for $B$ over $R$.
Rather than give the details of the definition of coefficient specialization, we now explain how to understand universal geometric coefficients for $B$ directly.

An exchange matrix $B$ defines, by matrix mutation, a collection of maps $\eta_\k^B:\reals^n\to\reals^n$ that we call \newword{mutation maps}.
Given $\a=(a_1,\ldots,a_n)\in\reals^n$, create an extended exchange matrix $\begin{bsmallmatrix}B\\\a\end{bsmallmatrix}$ with exchange matrix $B$ and a single coefficient row~$\a$.
Given a sequence $\k$ of integers in~$[n]$, the vector $\eta_\k^B(\a)$ is defined to be the coefficient row of the matrix $\mu_\k(\begin{bsmallmatrix}B\\\a\end{bsmallmatrix})$,
The map $\eta_\k^B$ is a piecewise-linear homeomorphism from $\reals^n$ to $\reals^n$, with inverse $\eta^{\mu_\k(B)}_{k_1,\ldots,k_q}$.
When $\k$ is the singleton sequence $k$, the vector $(a_1',\ldots,a_n')=\eta_k^B(a_1,\ldots,a_n)$ is given by 
\begin{equation}\label{mutation map def}
a'_j=\left\lbrace\begin{array}{ll}
-a_k&\mbox{if }j=k;\\
a_j+a_kb_{kj}&\mbox{if $j\neq k$, $a_k\ge 0$ and $b_{kj}\ge 0$};\\
a_j-a_kb_{kj}&\mbox{if $j\neq k$, $a_k\le 0$ and $b_{kj}\le 0$};\\
a_j&\mbox{otherwise.}
\end{array}\right.
\end{equation}

Given a collection of vectors $(\v_i:i\in S)$ in $\reals^n$ indexed by a finite set $S$, the formal expression $\sum_{i\in S}c_i\v_i$ is a \newword{$B$-coherent linear relation with coefficients in $R$} if each $c_i$ is in $R$ and if the equalities
\begin{eqnarray}
\label{linear eta}
&&\sum_{i\in S}c_i\eta^B_\k(\v_i)=\mathbf{0},\mbox{ and}\\
\label{piecewise eta}
&&\sum_{i\in S}c_i\mathbf{min}(\eta^B_\k(\v_i),\mathbf{0})=\mathbf{0}
\end{eqnarray}
hold for every finite sequence $\k=k_q,\ldots,k_1$ of integers in~$[n]$.
The symbol $\mathbf{min}$ stands for componentwise minimum.
Since \eqref{linear eta} is required also for the empty sequence $\k$, a $B$-coherent linear relation is also a linear relation in the usual sense. 

A collection $(\b_i:i\in I)$ of vectors in $R^n$, indexed by an arbitrary set $I$ is an \newword{$R$-spanning set for $B$} if for all $\a\in R^n$, there exists a finite subset $S\subseteq I$ and elements $(c_i:i\in S)$ of $R$ such that $\a-\sum_{i\in S}c_i\b_i$ is a $B$-coherent linear relation.
The collection $(\b_i:i\in I)$ is an \newword{$R$-independent set for $B$} if for every $B$-coherent linear relation $\sum_{i\in S}c_i\b_i$ with $S\subseteq I$, each $c_i$ is zero.
An \newword{$R$-basis} for $B$ is a collection $(\b_i:i\in I)$ that is both an $R$-independent set for $B$ and an $R$-spanning set for $B$.
The following theorem is \cite[Theorem~4.4]{universal}.

\begin{theorem}\label{basis univ}
Let $\tB$ be an extended exchange matrix with entries in $R$.
Then $\tB$ is universal over $R$ if and only if the coefficient rows of $\tB$ are an $R$-basis for $B$.
\end{theorem}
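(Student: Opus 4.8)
The plan is to convert the definition of \emph{coefficient specialization} into the mutation-linear language of $B$-coherent linear relations; once that is done, Theorem~\ref{basis univ} is essentially a translation. The first and main step is to set up a dictionary. A coefficient specialization $\phi\colon\A_R(\tB)\to\A_R(\tB')$ is a ring homomorphism carrying each initial cluster variable to the corresponding one, so it is determined by its effect on the tropical variables: $\phi(u_i)=\prod_{m\in I'}v_m^{p_{im}}$ for some $p_{im}\in R$, where $u_i$ ($i\in I$) and $v_m$ ($m\in I'$) are the tropical variables of $\tB$ and $\tB'$. Matching the coefficients of the initial seed forces $\b'_m=\sum_{i\in I}p_{im}\b_i$ (a finite sum) for every $m\in I'$, where $\b_i$ and $\b'_m$ are the coefficient rows. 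The remaining requirements of a coefficient specialization amount to asking that $\phi$ also match the coefficients, and the elements obtained from them by tropical addition of $1$, in \emph{every} seed $\mu_\k(\tB)$, not only in the initial one. Expressing the exponent vector of such a coefficient in terms of the mutation map $\eta^B_\k$ applied to the coefficient rows of $\tB$ — this is the point at which one checks that coefficients transform under $\mu_k$ exactly according to \eqref{mutation map def} — the compatibility of the coefficients themselves in the seed $\mu_\k(\tB)$ becomes $\eta^B_\k(\b'_m)=\sum_i p_{im}\,\eta^B_\k(\b_i)$, and the ``$\oplus1$'' compatibility becomes $\mathbf{min}(\eta^B_\k(\b'_m),\mathbf{0})=\sum_i p_{im}\,\mathbf{min}(\eta^B_\k(\b_i),\mathbf{0})$; together, over all $\k$, these say precisely that $\b'_m-\sum_i p_{im}\b_i$ is a $B$-coherent linear relation. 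In other words, coefficient specializations $\A_R(\tB)\to\A_R(\tB')$ correspond bijectively to arrays $(p_{im})$, finitely supported in each column, such that $\b'_m-\sum_i p_{im}\b_i$ is a $B$-coherent linear relation for every $m\in I'$.

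Granting the dictionary, both implications are short. For ``if'', assume the coefficient rows of $\tB$ are an $R$-basis and let $\tB'$ be any extended exchange matrix with exchange matrix $B$. The $R$-spanning property yields, for each $m\in I'$, a finitely supported array $(p_{im})_{i\in I}$ with $\b'_m-\sum_i p_{im}\b_i$ a $B$-coherent linear relation; assembling these over $m$ gives, by the dictionary, a coefficient specialization $\A_R(\tB)\to\A_R(\tB')$. If two arrays $(p_{im})$ and $(q_{im})$ both worked, then $\sum_i(p_{im}-q_{im})\b_i$ would be a $B$-coherent linear relation for each $m$, so $R$-independence forces them equal; the coefficient specialization is unique, and $\tB$ is universal. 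For ``only if'', assume $\tB$ universal. Applying universality with $\tB'=\begin{bsmallmatrix}B\\\a\end{bsmallmatrix}$, for each $\a\in R^n$, produces a finitely supported array $(p_i)_{i\in I}$ with $\a-\sum_i p_i\b_i$ a $B$-coherent linear relation, which is exactly the $R$-spanning condition. For independence, suppose $\sum_{i\in S}c_i\b_i$ is a $B$-coherent linear relation with some $c_{i_0}\neq0$; then the array defined by $q_{i,i_0}=\delta_{i,i_0}+c_i$ for $i\in S$ and $q_{im}=\delta_{im}$ otherwise satisfies $\sum_i q_{im}\b_i=\b_m$ for all $m\in I$, with each $\b_m-\sum_i q_{im}\b_i$ a scalar multiple of the $B$-coherent relation $\sum_i c_i\b_i$, hence itself $B$-coherent. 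By the dictionary this array gives a coefficient specialization $\A_R(\tB)\to\A_R(\tB)$ different from the identity, contradicting uniqueness. So the coefficient rows are $R$-independent, and with $R$-spanning they form an $R$-basis.

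The main obstacle is the first paragraph: making the dictionary precise. Concretely this requires (i) extracting from the somewhat technical definition of coefficient specialization the fact that it amounts to a ring homomorphism matching initial cluster variables together with compatibility with the coefficients and their ``$\oplus1$'' partners in every seed rather than just the initial one; (ii) verifying that the coefficients in the seed $\mu_\k(\tB)$ have exponent vectors governed by $\eta^B_\k$, so that tropical ``$\oplus1$'' compatibility turns into the $\mathbf{min}$-condition \eqref{piecewise eta}; and (iii) taking care of bookkeeping — finiteness of supports when $I$ or $I'$ is infinite, and whether the exponents $p_{im}$ are constrained to lie in $R_{\ge 0}$ rather than $R$ — which is where the real work of \cite{universal} lies. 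Once the dictionary is in place the theorem is formal. (In the present paper this statement is simply quoted as \cite[Theorem~4.4]{universal}; the above is how I would reconstruct a proof.)
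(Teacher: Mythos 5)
This statement is not proved in the paper at all: it is imported verbatim as \cite[Theorem~4.4]{universal}, so there is no in-paper proof to compare against. Your reconstruction follows the same strategy as the cited source: first a ``dictionary'' identifying coefficient specializations $\A_R(\tB)\to\A_R(\tB')$ with arrays $(p_{im})$ for which each $\b'_m-\sum_i p_{im}\b_i$ is a $B$-coherent linear relation (this is essentially \cite[Proposition~4.3]{universal}), and then a purely formal translation of existence/uniqueness of specializations into spanning/independence. Your second paragraph is correct and complete granted the dictionary; the implicit facts it uses do hold, namely that $B$-coherent linear relations are closed under differences and negation (both \eqref{linear eta} and \eqref{piecewise eta} are linear in the coefficients $c_i$ for a fixed list of vectors), and that the array $\mapsto$ specialization map is injective (the tropical variables are free generators of the tropical semifield), which is what lets you convert uniqueness of the specialization into $R$-independence and a nontrivial $B$-coherent relation into a non-identity self-specialization. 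The honest caveat, which you state yourself, is that the dictionary is where all the mathematical content lives: one must show that the technical conditions in the definition of coefficient specialization reduce exactly to matching, in every seed $\mu_\k(\tB)$, the coefficients and their tropical ``$\oplus 1$'' partners, that these are governed by $\eta^B_\k$ and by $\mathbf{min}(\eta^B_\k(\cdot),\mathbf{0})$, and that the semifield homomorphism is forced to send tropical variables to monomials with exponents in $R$. As a blind reconstruction of the cited argument your proposal is structurally faithful; as a self-contained proof it defers precisely the step that the source paper spends its effort on.
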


For a vector $\a=(a_1,\ldots,a_n)$ in $\reals^n$ we define $\vsgn(\a)$ to be $(\sgn(a_1),\ldots,\sgn(a_n))$.
Using mutation maps and the function $\vsgn$, we define an equivalence relation on $\reals^n$.
Set $\a_1\equiv^B\a_2$ if and only if $\mathbf{sgn}(\eta^B_\k(\a_1))=\mathbf{sgn}(\eta^B_\k(\a_2))$ for every sequence $\k$ of integers in~$[n]$.
The $\equiv^B$-equivalence classes are called \newword{$B$-classes} and their closures are called $B$-cones.
Each $B$-cone is a closed convex cone.
The following proposition is \cite[Proposition~5.3]{universal}.

\begin{prop}\label{linear}
Every mutation map $\eta_\k^B$ is linear on every $B$-cone.
\end{prop}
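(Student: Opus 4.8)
The plan is to fix a $B$-class $C$ and a sequence $\k$, and show that $\eta^B_\k$ agrees on $C$ with a single linear map. The key observation is that the mutation map $\eta^B_\k$ is \emph{piecewise}-linear: tracing through the formula \eqref{mutation map def}, the single-step map $\eta^B_k$ is linear on each of the regions cut out by the sign conditions ``$a_k\ge 0$'' and ``$a_k\le 0$'', i.e.\ its only breaks occur along the hyperplane $\{a_k=0\}$. More precisely, $\eta^B_k$ restricted to the closed halfspace determined by a fixed choice of sign for the $k\th$ coordinate is the linear map with matrix $E$ whose entries depend only on that sign and on the $k\th$ row of $B$. First I would make this precise: for each $\ep\in\{+,-\}$ let $E_{k,\ep}^B$ denote this linear map, so that $\eta^B_k(\a)=E_{k,\vsgn(\a)_k}^B(\a)$ for all $\a$, with the two choices agreeing on $\{a_k=0\}$.

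Next I would iterate. Writing $\k=k_q,\ldots,k_1$ and recalling $\eta^B_\k=\eta^{B^{(q-1)}}_{k_q}\circ\cdots\circ\eta^{B}_{k_1}$ where $B^{(i)}=\mu_{k_i,\ldots,k_1}(B)$, one sees that along the ``trajectory'' of a point $\a$ — that is, the sequence $\a,\,\eta^B_{k_1}(\a),\,\eta^B_{k_2,k_1}(\a),\ldots$ — each step applies one of the linear maps $E_{k_{i+1},\ep_{i+1}}^{B^{(i)}}$, where $\ep_{i+1}$ is the sign of the $k_{i+1}\th$ coordinate of the $i\th$ point in the trajectory, and this sign is exactly the $k_{i+1}\th$ component of $\vsgn(\eta^B_{k_i,\ldots,k_1}(\a))$. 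Hence $\eta^B_\k$ restricted to $\a$ is a composition of linear maps, and the composition depends on $\a$ \emph{only} through the list of sign vectors $\bigl(\vsgn(\eta^B_{k_i,\ldots,k_1}(\a))\bigr)_{i=0}^{q-1}$. But by the very definition of $\equiv^B$, any two points in the same $B$-class $C$ produce identical such lists, so $\eta^B_\k$ agrees on all of $C$ with one fixed composition of linear maps, which is linear. Finally, since a $B$-cone is by definition the closure $\overline{C}$ of a $B$-class, and $\eta^B_\k$ is continuous (it is a homeomorphism) while the fixed linear map is continuous, the two agree on $\overline{C}$ as well; this handles the passage from $B$-classes to $B$-cones.

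The main obstacle — really the only point requiring care — is the bookkeeping in the iteration: one must check that the sign governing the $(i{+}1)\st$ step is genuinely read off from $\vsgn(\eta^B_{k_i,\ldots,k_1}(\a))$ and not from something that could differ between two $\equiv^B$-equivalent points, and one must be careful about boundary behavior when some coordinate of an intermediate point is $0$ (so that $\ep$ is ambiguous). The ambiguity is harmless precisely because the two candidate linear maps $E_{k,+}^{B}$ and $E_{k,-}^{B}$ agree on the hyperplane $\{a_k=0\}$, so any consistent choice yields the same value; alternatively, one can simply observe that the sign vector $\vsgn(\eta^B_{k_i,\ldots,k_1}(\a))$ does record a $0$ in that coordinate, that this $0$ is shared by all points of $C$, and that on $\{a_k=0\}$ the map $\eta^B_k$ is unambiguously linear. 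With that settled, the proof is complete.
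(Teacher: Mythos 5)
Your argument is correct and takes essentially the approach of the source: this paper quotes the proposition from \cite[Proposition~5.3]{universal} without reproving it, and the proof there runs just as yours does --- decompose $\eta^B_\k$ into single-step maps, each linear on the two half-spaces $\{a_k\ge 0\}$ and $\{a_k\le 0\}$ with the two linear pieces agreeing on $\{a_k=0\}$, observe that the sequence of linear pieces traversed is determined by the sign vectors $\vsgn(\eta^B_{k_i,\ldots,k_1}(\a))$, which by definition of $\equiv^B$ are constant on a $B$-class, and pass to the closure (the $B$-cone) by continuity. Your attention to the boundary case where an intermediate coordinate vanishes is exactly the point that needs checking, and you resolve it correctly.
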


The collection $\F_B$, consisting of all $B$-cones together with all their faces is called the \newword{mutation fan} for $B$, in light of the following theorem  \cite[Theorem~5.13]{universal}.  
\begin{theorem}\label{fan}
The collection $\F_B$ is a complete fan.
\end{theorem}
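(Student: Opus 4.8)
The plan is to verify the two things required for $\F_B$ to be a fan — that it is closed under taking faces, and that the intersection of any two of its cones is a common face — and then to verify completeness, i.e.\ that the union of the $B$-cones is all of $\reals^n$. Completeness is immediate: every point of $\reals^n$ lies in some $\equiv^B$-class, hence in the closure of that class, which is a $B$-cone; and each $B$-cone is by definition in $\F_B$. Closure under faces is also built into the definition of $\F_B$, since we have explicitly thrown in all faces of all $B$-cones. So the entire content is the ``common face'' axiom, together with the subsidiary fact (already asserted in the text preceding the theorem, but worth reproving or at least citing) that each $B$-cone is a closed convex cone.

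First I would pin down the local structure near a point $\a$. For a fixed sequence $\k$, the sign vector $\vsgn(\eta^B_\k(\x))$ is constant on each relatively open region cut out by the coordinate hyperplanes in the target, pulled back through the piecewise-linear homeomorphism $\eta^B_\k$; since $\eta^B_\k$ is piecewise-linear with finitely many linearity domains, the common refinement over a single $\k$ is a finite polyhedral fan $\F_\k$. The $B$-classes are then the nonempty intersections $\bigcap_\k C_\k$ with $C_\k$ ranging over (relatively open) cones of $\F_\k$, and the $B$-cones are the closures of these. The key structural input is Proposition~\ref{linear}: $\eta^B_\k$ is linear on each $B$-cone. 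This is what lets me argue that a $B$-cone $\mathcal C$ really is a finite intersection of closed half-spaces locally, hence polyhedral, and — crucially — that the face structure of $\mathcal C$ is compatible with the $\equiv^B$ relation, i.e.\ each (relatively open) face of a $B$-cone is a union of $B$-classes.

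The heart of the argument, and the step I expect to be the main obstacle, is the common-face axiom: given two $B$-cones $\mathcal C_1 = \overline{X_1}$ and $\mathcal C_2 = \overline{X_2}$, I must show $\mathcal C_1 \cap \mathcal C_2$ is a face of each. The natural approach: take a point $\p$ in the relative interior of $\mathcal C_1 \cap \mathcal C_2$, and show that the smallest face of $\mathcal C_1$ containing $\p$ equals $\mathcal C_1 \cap \mathcal C_2$ (and symmetrically for $\mathcal C_2$). One inclusion — that $\mathcal C_1\cap\mathcal C_2$ is contained in a face of $\mathcal C_1$ — should follow because the sign vectors $\vsgn(\eta^B_\k(\cdot))$ that ``cut out'' $\mathcal C_1$ are, by Proposition~\ref{linear} and linearity of each $\eta^B_\k$ on $\mathcal C_1$, governed by finitely many linear functionals on $\mathcal C_1$; the locus inside $\mathcal C_1$ where some such functional vanishes is a face, and $\mathcal C_1\cap\mathcal C_2$ lands in such a vanishing locus because on $\mathcal C_2$ those functionals take the sign dictated by $X_2$, which is ``more degenerate'' than the generic sign on $\mathcal C_1$. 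The reverse inclusion uses that each relatively open face of $\mathcal C_1$ is a union of $\equiv^B$-classes (from the structural step above), so the minimal face of $\mathcal C_1$ through $\p$ is exactly the $B$-cone-closure of $\p$'s class, and that class, being $\equiv^B$-equivalent to the interior points of $\mathcal C_1\cap\mathcal C_2$, has closure contained in both $\mathcal C_1$ and $\mathcal C_2$. Making the phrase ``finitely many linear functionals cut out $\mathcal C_1$'' precise — in particular controlling infinitely many sequences $\k$ with only finitely many distinct linear pieces on a given cone — is the delicate point; I would handle it by observing that, restricted to $\mathcal C_1$, the collection $\{\,\vsgn\circ\eta^B_\k\,:\,\k\,\}$ takes only finitely many ``values as sign-vector-valued functions,'' because each is determined by the finite arrangement $\F_\k$ and the nesting $\mathcal C_1\subseteq$ (cone of $\F_\k$), so only finitely many hyperplanes through cones refining $\mathcal C_1$ are relevant near any given point. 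Once that finiteness is in hand, the polyhedrality of each $B$-cone and the common-face property are a standard exercise in the combinatorics of hyperplane arrangements and their induced fans, and completeness finishes the proof.
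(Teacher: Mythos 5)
First, a point of comparison: the paper does not prove this statement at all --- it is quoted verbatim from \cite[Theorem~5.13]{universal} --- so there is no in-paper argument to measure yours against. Judged on its own terms, your proposal correctly disposes of completeness and closure under faces and correctly identifies the common-face axiom as the entire content, but the route you propose for that axiom has a genuine gap.

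The gap is the finiteness claim on which everything else rests: that restricted to a $B$-cone, or near a given point, ``only finitely many hyperplanes \ldots are relevant,'' so that each $B$-cone is polyhedral and the common-face property reduces to the combinatorics of a finite hyperplane arrangement. A $B$-class is the common refinement of the sign conditions over \emph{infinitely many} sequences $\k$, and nothing forces this refinement to stabilize locally. In general $B$-cones need not be polyhedral and $\F_B$ need not be locally finite, and the very fan computed in this paper defeats your local-finiteness picture: by Theorem~\ref{torus FB}, infinitely many full-dimensional cones (images of Farey triangles) accumulate on the plane $x+y+z=0$, and \emph{every} ray in that plane --- including the irrational ones --- is a cone of $\F_B$, the irrational ones being maximal $B$-cones. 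An irrational ray is not cut out by finitely many of the (rational) linear conditions arising from mutation maps; it arises only as an infinite intersection. So near a point of that plane the reduction to a finite arrangement fails, and with it the ``standard exercise'' that was supposed to deliver both polyhedrality and the common-face axiom. The proof in \cite{universal} avoids any such finiteness: it works directly with the convexity of $B$-cones and Proposition~\ref{linear} (linearity of each $\eta^B_\k$ on each $B$-cone), showing via sign arguments that the intersection of two $B$-cones is again a $B$-cone and that a $B$-cone contained in another is a face of it. You would need an argument of that shape, valid for infinitely many, possibly non-polyhedral, non-locally-finite cones, rather than an appeal to finite hyperplane arrangements.
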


A \newword{positive $R$-basis} for $B$ is an $R$-independent set for $B$ such that for any $\a\in R^n$, there is a $B$-coherent linear relation $\a-\sum_{i\in S}c_i\b_i$ with each $c_i$ nonnegative.
A positive $R$-basis for $B$ is in particular an $R$-basis for $B$.
For each $B$ and $R$, there is at most one positive $R$-basis for $B$, up to scaling each basis element by a positive unit in $R$.
(See  \cite[Proposition~6.2]{universal}.)
In the case $R=\reals$, there is a direct connection between the mutation fan and the problem of constructing a positive $R$-basis for $B$, described in the following propositions, which are \cite[Corollary~6.13]{universal} and \cite[Proposition~6.14]{universal}.

\begin{prop}\label{pos reals FB}
If a positive $\reals$-basis for $B$ exists, then $\F_B$ is simplicial.  
The basis consists of exactly one vector in each ray of $\F_B$.
\end{prop}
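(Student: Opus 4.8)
The plan is to reduce everything to the following claim: \emph{for every $B$-cone $C$, the set $\mathcal B\cap C$ of basis elements lying in $C$ is linearly independent, and $C=\Cone(\mathcal B\cap C)$}, where $\mathcal B=(\b_i:i\in I)$ is the given positive $\reals$-basis and $[\x]$ denotes the $B$-class (i.e.\ $\equiv^B$-equivalence class) of a vector $\x$. (We may assume each $\b_i\neq\mathbf 0$, since $\mathbf 0$ cannot occur with nonzero coefficient in a $B$-coherent linear relation, by $\reals$-independence.) Granting the claim, every cone of $\F_B$ is the nonnegative hull of a linearly independent subset of $\mathcal B$ --- this holds for $B$-cones by the claim, and hence for their faces, which are faces of simplicial cones --- so $\F_B$ is simplicial, being a complete fan by Theorem~\ref{fan}. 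Applying the claim to a one-dimensional cone $\rho$ forces $\mathcal B\cap\rho$ to be a single vector, since it is linearly independent (hence at most one element) and nonempty (as $\Cone(\emptyset)=\{\mathbf 0\}\neq\rho$). Conversely each $\b_i$ spans a ray: by the structure theory of $\F_B$ in \cite{universal}, $\b_i$ lies in the relative interior of its $B$-cone $\overline{[\b_i]}$, so if $\dim\overline{[\b_i]}\ge 2$ then $\b_i$ would be a strictly positive combination of the at least two linearly independent generators $\mathcal B\cap\overline{[\b_i]}$, one of which is $\b_i$ itself --- impossible. Hence $\b_i\mapsto\Cone(\b_i)$ is a bijection from $\mathcal B$ onto the rays of $\F_B$, which is the second assertion.

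Two lemmas yield the claim. The first is a localization statement: if $\a-\sum_{i\in S}c_i\b_i$ is a $B$-coherent linear relation with every $c_i\ge 0$, then $\b_i\in\overline{[\a]}$ whenever $c_i>0$. I would argue as follows. By \eqref{linear eta} applied to this relation, $\sum_i c_i\,\eta^B_\k(\b_i)=\eta^B_\k(\a)$ for every sequence $\k$, and by \eqref{piecewise eta}, $\sum_i c_i\,\mathbf{min}(\eta^B_\k(\b_i),\mathbf 0)=\mathbf{min}(\eta^B_\k(\a),\mathbf 0)$. Fix $\k$ and a coordinate $j$. If $\eta^B_\k(\a)_j\ge 0$, the $j$-th component of the second identity reads $\sum_i c_i\min(\eta^B_\k(\b_i)_j,0)=0$, a sum of nonpositive terms, so each term vanishes and $\eta^B_\k(\b_i)_j\ge 0$ for every $i$ with $c_i>0$. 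If $\eta^B_\k(\a)_j\le 0$, subtracting the second identity from the first in coordinate $j$ gives $\sum_i c_i\max(\eta^B_\k(\b_i)_j,0)=\max(\eta^B_\k(\a)_j,0)=0$, a sum of nonnegative terms, forcing $\eta^B_\k(\b_i)_j\le 0$ for every $i$ with $c_i>0$; and if $\eta^B_\k(\a)_j=0$, both conclusions hold, so $\eta^B_\k(\b_i)_j=0$. Thus, for $c_i>0$, the sign vector $\vsgn(\eta^B_\k(\b_i))$ is dominated coordinatewise by $\vsgn(\eta^B_\k(\a))$ for every $\k$; by the description of $B$-cones in \cite{universal}, this is exactly the condition $\b_i\in\overline{[\a]}$.

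The second lemma strengthens Proposition~\ref{linear}: on any $B$-cone $C$, the map $\b\mapsto\mathbf{min}(\eta^B_\k(\b),\mathbf 0)$ also agrees with a linear map. Indeed, $C=\overline{D}$ for some $B$-class $D$, on which the sign vector $\vsgn(\eta^B_\k(\cdot))$ is constant, say equal to $s$; there $\mathbf{min}(\eta^B_\k(\cdot),\mathbf 0)$ equals the linear map that $\eta^B_\k$ restricts to on $C$ (Proposition~\ref{linear}) followed by the projection that kills the coordinates $j$ with $s_j\ge 0$, and since $D$ is dense in $C$ and both maps are continuous they agree throughout $C$. A formal consequence is that any ordinary linear relation among basis elements all lying in one $B$-cone is automatically a $B$-coherent linear relation, so $\reals$-independence forces it to be trivial; in particular $\mathcal B\cap C$ is linearly independent for every $B$-cone $C$, hence finite. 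For the spanning half of the claim, write a $B$-cone $C$ as $\overline{D}$ with $D$ a $B$-class: for each $\a\in D$ the positive-basis property provides $\a=\sum_{i\in S}c_i\b_i$ with $c_i\ge 0$, and the localization lemma puts every $\b_i$ with $c_i>0$ in $\overline{[\a]}=\overline{D}=C$, so $\a\in\Cone(\mathcal B\cap C)$. Thus $D\subseteq\Cone(\mathcal B\cap C)$; since $\mathcal B\cap C$ is finite and linearly independent, $\Cone(\mathcal B\cap C)$ is closed, so $C=\overline{D}\subseteq\Cone(\mathcal B\cap C)$, and the reverse inclusion is clear.

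I expect the localization lemma to be the main obstacle --- more precisely, the step from the derived coordinatewise sign domination to the genuine membership $\b_i\in\overline{[\a]}$, which is exactly where the structural description of the mutation fan from \cite{universal} (the interplay between $B$-classes, their closures, and faces) is used. The remaining steps are routine manipulations of the positive-basis property, Proposition~\ref{linear}, and linear independence.
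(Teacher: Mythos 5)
The paper does not actually prove Proposition~\ref{pos reals FB}; it quotes it from \cite{universal} (Corollary~6.13), so the comparison is with the argument there. Your proposal reconstructs essentially that argument: the key step is likewise that, when a positive $\reals$-basis $\mathcal{B}$ exists, every $B$-cone $C$ equals the nonnegative span of the linearly independent (hence finite) set $\mathcal{B}\cap C$, obtained from exactly your two lemmas --- the localization of a nonnegative $B$-coherent expression for $\a$ into the cone $\overline{[\a]}$ via the sign-domination description of closures of $B$-classes, and the linearity of $\x\mapsto\mathbf{min}(\eta^B_\k(\x),\mathbf{0})$ on each $B$-cone, which upgrades an ordinary linear relation among basis elements lying in one $B$-cone to a $B$-coherent one. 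The sign-domination characterization of $\overline{[\a]}$ that you correctly identify as the crux is available in \cite{universal}, so the substance of the proof is sound.

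Two spots in your derivation of the ``exactly one vector in each ray'' assertion should be tightened, though both are repaired by your own Claim. First, you apply the Claim directly to a ray $\rho$ of $\F_B$, but $\F_B$ consists of $B$-cones \emph{together with all their faces}, and a ray of $\F_B$ need not itself be a $B$-cone. Instead: $\rho$ is a face of some $B$-cone $C=\Cone(\mathcal{B}\cap C)$ with $\mathcal{B}\cap C$ linearly independent, so $\rho$, being a one-dimensional face of a simplicial cone, is spanned by one of the generators, i.e.\ contains a basis element; and it contains at most one, since any basis element on $\rho$ lies in the linearly independent set $\mathcal{B}\cap C$. Second, to see that each $\b_i$ spans a ray of $\F_B$ you invoke the assertion that $\b_i$ lies in the relative interior of $\overline{[\b_i]}$ (equivalently, that $B$-classes are relatively open), which is more than you should lean on without verification and is in any case unnecessary: since $\b_i\in\mathcal{B}\cap\overline{[\b_i]}$ and this set is a linearly independent generating set of the simplicial cone $\overline{[\b_i]}$, the vector $\b_i$ spans an extreme ray of that cone, which is a face of a $B$-cone and hence a ray of $\F_B$.
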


\begin{prop}\label{pos reals FB converse}
A collection consisting of exactly one nonzero vector in each ray of $\F_B$ is a positive $\reals$-basis for $B$ if and only if it is an $\reals$-independent set for~$B$.
\end{prop}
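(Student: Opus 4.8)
The ``only if'' direction is immediate: a positive $\reals$-basis for $B$ is in particular an $\reals$-basis, hence an $\reals$-independent set for $B$. All the content is in the ``if'' direction, and the tool I would isolate first is the following \emph{coherence lemma}: if $\v_0,\dots,\v_m$ all lie in a single $B$-cone $C$ and $\sum_{i=0}^m d_i\v_i=\mathbf 0$ holds in the usual sense, then the formal expression $\sum_{i=0}^m d_i\v_i$ is a $B$-coherent linear relation. Granting the lemma, let $(\b_\rho)$ be a collection containing exactly one nonzero vector in each ray $\rho$ of $\F_B$, and assume it is $\reals$-independent; I would then proceed in two stages, showing first that $\F_B$ is simplicial and then that $(\b_\rho)$ is a positive $\reals$-spanning set.

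For the first stage, recall that every cone of $\F_B$ is pointed (the mutation fan is line-free) and that a pointed closed convex cone is the closure of the conical hull of its extreme rays; the extreme rays of a $B$-cone, being its one-dimensional faces, are rays of $\F_B$. If some $d$-dimensional $B$-cone $C$ were not simplicial, it would have at least $d+1$ extreme rays, whose chosen vectors $\b_{\rho_0},\dots,\b_{\rho_d}\in\Span C$ would satisfy a nontrivial linear relation; by the coherence lemma this relation is $B$-coherent, contradicting $\reals$-independence. Hence every $B$-cone, and therefore (passing to faces) every cone of $\F_B$, is simplicial. For the second stage, fix $\a\in\reals^n$ and let $C$ be the $B$-cone having $\a$ in its relative interior. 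Since $C$ is simplicial with rays $\rho_1,\dots,\rho_d$, we may write $\a=\sum_{i=1}^d c_i\,\b_{\rho_i}$ with uniquely determined $c_i>0$. This is a linear relation among vectors of the single $B$-cone $C$, so by the coherence lemma $\a-\sum_{i=1}^d c_i\,\b_{\rho_i}$ is a $B$-coherent linear relation, and as the $c_i$ are nonnegative this shows $(\b_\rho)$ is a positive $\reals$-spanning set for $B$. Together with the independence hypothesis, $(\b_\rho)$ is a positive $\reals$-basis for $B$.

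It remains to prove the coherence lemma, which I expect to be the main obstacle. Fix a finite sequence $\k$ of indices in $[n]$. Proposition~\ref{linear} gives a linear map $L_\k$ with $\eta^B_\k=L_\k$ on $C$, so $\sum_i d_i\,\eta^B_\k(\v_i)=L_\k\bigl(\sum_i d_i\v_i\bigr)=\mathbf 0$, which is \eqref{linear eta}. For \eqref{piecewise eta} I would exploit that $\vsgn(\eta^B_\k(\cdot))$ is constant on each $B$-class: writing $C$ as the closure of a $B$-class $D$ and letting $\s\in\{+,0,-\}^n$ be that common sign vector, continuity of $\eta^B_\k$ forces, for every $\v\in C$ and every coordinate $j$, that the $j$th coordinate of $\eta^B_\k(\v)$ is $\ge 0$ when $\s_j=+$, is $\le 0$ when $\s_j=-$, and equals $0$ when $\s_j=0$ (in this last case $D$, hence $C$, lies in the hyperplane $x_j=0$). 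Consequently $\mathbf{min}(\eta^B_\k(\v),\mathbf 0)=M_\k\,\eta^B_\k(\v)$ for all $\v\in C$, where $M_\k$ is the diagonal $0$/$1$ matrix carrying a $1$ exactly in the positions $j$ with $\s_j=-$; applying the already-proved \eqref{linear eta} then yields $\sum_i d_i\,\mathbf{min}(\eta^B_\k(\v_i),\mathbf 0)=M_\k\bigl(\sum_i d_i\,\eta^B_\k(\v_i)\bigr)=\mathbf 0$, which is \eqref{piecewise eta}. The delicate point is precisely this linearization of $\mathbf{min}(\,\cdot\,,\mathbf 0)$ on the $B$-cone $C$: one needs the sign vector to be genuinely constant on a $B$-class and one must handle the zero coordinates with care, so that $\mathbf{min}(\,\cdot\,,\mathbf 0)$ really is the restriction of a single linear map to $C$ rather than merely piecewise-linear there. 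The remaining facts used above — that $\F_B$ is line-free, that a $B$-cone is the closure of the conical hull of its extreme rays, that those extreme rays are rays of $\F_B$, and that every $\a\in\reals^n$ lies in the relative interior of a unique $B$-cone — are standard properties of the mutation fan established in \cite{universal}.
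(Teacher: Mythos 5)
This proposition is quoted in the paper from \cite[Proposition~6.14]{universal} without proof, so there is no in-paper argument to compare against; your proof is correct and is essentially the standard one from that reference. The engine is exactly your ``coherence lemma'' (a linear relation among vectors lying in a common $B$-cone is automatically $B$-coherent, which is \cite[Proposition~5.8]{universal}, proved just as you do via Proposition~\ref{linear} and the constancy of $\vsgn\circ\eta^B_\k$ on a $B$-class), applied once to deduce simpliciality from independence and once to produce the positive spanning relation. The only imprecision is in the second stage: the minimal cone of $\F_B$ containing $\a$ in its relative interior need not itself be a $B$-cone, so you should take that (simplicial) cone, write $\a$ as a positive combination of the chosen vectors on its rays, and then apply the lemma inside any $B$-cone of which it is a face --- a one-line repair.
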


For $R\neq\reals$, a similar statement can be made in terms of the $R$-part (e.g.\ the rational part) of $\F_B$. 
See \cite[Definition~6.9]{universal} and \cite[Corollary~6.12]{universal}.

\section{The once-punctured torus}
The purpose of this section is to review background material on (universal) cluster algebras from surfaces.
However, since this paper is focused on a particular surface, the once-punctured torus, we are able to simplify some definitions and results.
For the general definitions and results, see  \cite{cats1,cats2,unisurface}.

In general, a marked surface is $(\S,\M)$, where $\S$ is a surface and $\M$ is some set of points on $\S$, satisfying certain rules.
Here we take $\S$ to be a torus and $\M=\set{p}$ where $p$ is some point in $\S$ and write $(\S,p)$ rather than $(\S,\set{p})$.
The point $p$ is referred to as a puncture, so that $(\S,p)$ is the once-punctured torus.
An \newword{arc} in $(\S,p)$ is a curve in $\S$, up to isotopy relative to $\set{p}$, with both endpoints at $p$, but otherwise not containing $p$ and having no self-intersections.
We exclude the arc that bounds an unpunctured monogon.
Two arcs $\alpha$ and $\gamma$ are \newword{compatible} if (there is some isotopy representative of each such that) the two do not intersect except at $p$.
The \newword{arc complex} is the abstract simplicial complex whose vertices are the arcs and whose faces are the sets of pairwise compatible arcs.
A \newword{triangulation} of $(\S,p)$ is a maximal face of the arc complex.
A triangulation contains three arcs and cuts $\S$ into two triangles.
Each arc in a triangulation is an edge of exactly two triangles in the triangulation.
A \newword{flip} is the operation of removing an arc from a triangulation and forming a new triangulation by inserting an arc that forms the other diagonal of the resulting quadrilateral.
A \newword{tagged arc} in $(\S,p)$ is a arc in $\S$, either with both ends of the arc designated (or ``tagged'') \newword{plain} or with both tagged \newword{notched}.
%(This is a special case of a definition that makes more sense in general.)
In general, one makes a suitable definition of compatibility of tagged arcs and works with the ``tagged arc complex.''
For $(\S,p)$, however, we can (and indeed are forced to) work with the ordinary arc complex.
(See \cite[Proposition~7.10]{cats1}.)

Each triangulation $T$ of $(\S,p)$ has a \newword{signed adjacency matrix} $B(T)$, defined in \cite[Definition~5.15]{cats2}.
This is a matrix indexed by the arcs in $T$.
The entry $b_{\alpha\gamma}$ is the sum, over the two triangles of $T$, of a $1$ if $\gamma$ immediately follows $\alpha$ in a clockwise path around the triangle or $-1$ if $\gamma$ immediately follows $\alpha$ in a counterclockwise path around the triangle.
For the rest of the paper, the symbol $B$ will stand for $\begin{bsmallmatrix*}[r]0&2&-2\\-2&0&2\\2&-2&0\\\end{bsmallmatrix*}$.
For any triangulation $T$ of $(\S,p)$, the matrix $B(T)$ is $\pm B$.

An \newword{allowable curve} in $(\S,p)$ is a non-self-intersecting curve in $\S\setminus\set{p}$ that either 
\begin{itemize}
\item is closed and not contractible in $\S$, or
\item spirals into p at both ends.
\end{itemize}
In the once-punctured torus, a non-closed allowable curve spirals into the same puncture at both ends, and therefore spirals in the same direction at both ends.

Two allowable curves are \newword{compatible} if they do not intersect.
In \cite{unisurface}, the notions of allowable curves and compatibility refer to \newword{quasi-laminations}, a modification of the \newword{rational unbounded measured laminations} that figure in~\cite{cats2}.
For the once-punctured torus, the two notions of laminations coincide.

Given an allowable curve $\lambda$ and a triangulation $T$, the \newword{shear coordinate vector} $\b(T,\lambda)$ of $\lambda$ with respect to $T$ is a vector with entries $b_\gamma(T,\lambda)$ indexed by arcs $\gamma$ in~$T$.
The entry $b_\gamma(T,\lambda)$ is defined as follows.
Up to isotopy, we can assume that $\lambda$ does not cross any arc twice consecutively in opposite directions.
The coordinate $b_\gamma(T,\lambda)$ is the sum, over the intersections of $\lambda$ with $\gamma$, of a number in $\set{-1,0,1}$.
This number is determined by how $\lambda$ intersects the two triangles having $\gamma$ as an edge.
It is $1$ or $-1$ if the intersection is as shown in Figure~\ref{shear fig}, and otherwise it is~$0$.
\begin{figure}[ht]
\raisebox{42 pt}{$+1$}\,\,\includegraphics{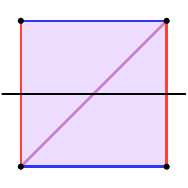}\qquad\qquad\includegraphics{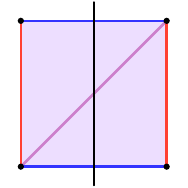}\,\,\raisebox{42 pt}{$-1$}
\caption{Computing shear coordinates}
\label{shear fig}
\end{figure}
The arc $\gamma$ is the diagonal of the quadrilateral shown, and opposite sides of the quadrilateral coincide.

A \newword{(weighted) tangle (of curves)} in $(\S,p)$ is a finite collection of allowable curves $\lambda$, each having an integer weight $w_\lambda$, with no requirement of compatibility between curves.
The shear coordinates $\b(T,\Xi)$ of a tangle $\Xi$ are given by the sum $\sum_{\lambda\in\Xi}w_\lambda \b(T,\lambda)$.
A \newword{null tangle} is a tangle with $b(T,\Xi)=\mathbf{0}$ for all triangulations $T$.
Proposition~\ref{one positive or one negative}, below, is \cite[Proposition~7.11]{unisurface}, specialized to the once-punctured torus.
We point out, however, an unfortunate typo in \cite[Proposition~7.11]{unisurface}:  It should begin ``Let $\Xi$ be a \emph{null} tangle\ldots,'' but the word ``null'' is omitted in~\cite{unisurface}.
The proposition itself is the specialization to surfaces of \cite[Proposition~4.12]{universal}.
See also \cite[Proposition~7.9]{unisurface}, quoted in this paper as Proposition~\ref{null tangle is B coher rel}.

\begin{prop}\label{one positive or one negative}
Let $\Xi$ be a null tangle in $(\S,p)$.
Suppose for some triangulation $T$, for some arc $\gamma$ in $T$, and for some curve $\lambda$ in $\Xi$ that $b_\gamma(T,\lambda)$ is strictly positive (resp. strictly negative) and that $b_\gamma(T,\nu)$ is nonpositive (resp. nonnegative) for every other $\nu\in\Xi$.
Then $w_\lambda=0$.
\end{prop}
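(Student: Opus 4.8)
The plan is to compare the given triangulation $T$ with the triangulation obtained from it by flipping the arc $\gamma$, and to read off the conclusion from the fact that a null tangle has vanishing shear coordinates with respect to every triangulation — the one property of $\Xi$ I will use is that $\b(T',\Xi)=\mathbf{0}$ for every triangulation $T'$ of $(\S,p)$. I treat the case in which $b_\gamma(T,\lambda)$ is strictly positive; the other case is entirely analogous, with the roles of the two signs reversed. Since $B(T)$ is $\pm B$ and each row of $B$ (hence of $-B$) has one off-diagonal entry equal to $2$ and one equal to $-2$, there is an arc $\delta\ne\gamma$ of $T$ with $b_{\gamma\delta}(T)=2$. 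Let $T'$ be the triangulation obtained by flipping $\gamma$ in $T$. The key input I would cite is that the effect of this flip on shear coordinates is given, coordinate by coordinate, by formula \eqref{mutation map def} applied at the index $\gamma$ (a standard property of shear coordinates of laminations; see \cite{cats2,unisurface}).

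Plugging $b_{\gamma\delta}(T)=2>0$ into \eqref{mutation map def}, I get, for every curve $\nu$ in $\Xi$,
\[
b_\delta(T',\nu)=
\begin{cases}
b_\delta(T,\nu)+2\,b_\gamma(T,\nu)&\text{if }b_\gamma(T,\nu)\ge 0,\\
b_\delta(T,\nu)&\text{if }b_\gamma(T,\nu)<0.
\end{cases}
\]
Multiplying by the weight $w_\nu$ and summing over all $\nu$ in $\Xi$, the terms $w_\nu\,b_\delta(T,\nu)$ assemble into $b_\delta(T,\Xi)$, leaving
\[
b_\delta(T',\Xi)=b_\delta(T,\Xi)+2\sum_{\substack{\nu\in\Xi\\ b_\gamma(T,\nu)\ge 0}}w_\nu\,b_\gamma(T,\nu).
\]
Since $\Xi$ is a null tangle, $b_\delta(T',\Xi)=b_\delta(T,\Xi)=0$, so the remaining sum is $0$. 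By hypothesis $\lambda$ is the only curve of $\Xi$ with $b_\gamma(T,\lambda)>0$, and every other curve $\nu$ of $\Xi$ has $b_\gamma(T,\nu)\le 0$; thus each $\nu\ne\lambda$ occurring in the sum has $b_\gamma(T,\nu)=0$ and contributes nothing. Hence $w_\lambda\,b_\gamma(T,\lambda)=0$, and as $b_\gamma(T,\lambda)>0$ this forces $w_\lambda=0$. In the case where $b_\gamma(T,\lambda)$ is strictly negative I would instead pick the arc $\delta$ with $b_{\gamma\delta}(T)=-2$ and use the matching branch of \eqref{mutation map def}, arriving in the same way at $2\,w_\lambda\,b_\gamma(T,\lambda)=0$.

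I do not expect a serious obstacle here: once the tools are in place the argument is a few lines. The real care goes into two bookkeeping points. First, one must line up the sign conventions governing how shear coordinates change under a flip with the conventions of \eqref{mutation map def}. Second, one must check the case split in \eqref{mutation map def} carefully — in particular that $b_{\gamma\delta}(T)=2>0$ places us in the ``otherwise'' branch (so the $\delta$-coordinate is unchanged by the flip) precisely for the curves $\nu$ with $b_\gamma(T,\nu)<0$; this is exactly what makes the weighted sum collapse to the single term $w_\lambda\,b_\gamma(T,\lambda)$. As an alternative — available if one first shows that the weighted shear-coordinate sum of a null tangle is a $B$-coherent linear relation — one could instead subtract \eqref{piecewise eta} from \eqref{linear eta} in the $\gamma$-coordinate at $T$, obtaining $\sum_{\nu\in\Xi}w_\nu\max(b_\gamma(T,\nu),0)=0$, which collapses to $w_\lambda\,b_\gamma(T,\lambda)=0$ in the same way.
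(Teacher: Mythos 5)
Your argument is correct, but note that the paper does not prove this proposition at all: it is quoted verbatim from \cite[Proposition~7.11]{unisurface}, and the proof there (as the paper itself indicates in Section~7 when extending the result to real tangles) runs through the machinery of $B$-coherent linear relations — first converting a null tangle into a $B(T)$-coherent relation via \cite[Proposition~7.9]{unisurface} and then invoking \cite[Proposition~4.12]{universal}, whose proof is exactly the subtraction of \eqref{piecewise eta} from \eqref{linear eta} that you sketch as your ``alternative.'' Your primary route is genuinely different and more elementary: it uses only the definition of a null tangle and the single fact that a flip transforms shear coordinates by the corresponding mutation map (\cite[Theorem~13.5]{cats2}, restated as \cite[Theorem~3.9]{unisurface}), performing one flip at $\gamma$ and reading off the $\delta$-coordinate for the arc $\delta$ with $b_{\gamma\delta}(T)=2$; the case analysis in \eqref{mutation map def} is applied correctly, and the robustness to sign conventions that you flag is real, since each row of $B(T)=\pm B$ contains both a $+2$ and a $-2$ so the appropriate $\delta$ always exists. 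Two small points: the statement as printed omits the word ``null,'' without which the conclusion is false (a tangle consisting of a single weighted curve is a counterexample), and you were right to make explicit that nullity is the only property of $\Xi$ you use; and your observation that the curves $\nu$ with $b_\gamma(T,\nu)=0$ contribute nothing to the collapsed sum is the one bookkeeping step that must be said, and you said it. What your direct route buys is independence from the $B$-coherence formalism; what the cited route buys is a statement (\cite[Proposition~4.12]{universal}) that applies uniformly to all $B$-coherent relations, which is what the paper later needs for the extension to real tangles.
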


The \newword{support} of a tangle is the set of curves having nonzero weight.
A tangle is \newword{trivial} if all weights are zero.
A marked surface has the \newword{Null Tangle Property} if every null tangle is trivial.
In Section~\ref{null tangle sec}, we prove the following theorem.
\begin{theorem}\label{torus null tangle}
The once-punctured torus has the Null Tangle Property.
\end{theorem}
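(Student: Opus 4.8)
The plan is to classify the allowable curves in $(\S,p)$ concretely and then show that any null tangle must have all weights zero, by exhibiting, for each curve in the support, a triangulation and an arc on which that curve ``wins'' in the sense of Proposition~\ref{one positive or one negative}. First I would set up coordinates: allowable curves in the once-punctured torus are of two kinds---the closed non-contractible curves, which are indexed by their slope $b/a$ (a primitive pair, or $\infty$), and the curves spiralling into $p$ at both ends, which similarly carry a slope together with a choice of spiralling direction (clockwise or counterclockwise) at each end. The triangulations of $(\S,p)$ are indexed by Farey triples, i.e.\ triples of slopes that are pairwise ``Farey neighbors'' (unimodular), and flips correspond to Farey mutations; this is the combinatorial heart of the picture and is where the connection to \cite{Najera,Series} enters. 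I would record the explicit shear coordinate vectors $\b(T,\lambda)$ in these coordinates---for a triangulation $T$ with Farey triple $(s_1,s_2,s_3)$ and a curve $\lambda$ of slope $b/a$, the entries are determined by how the slope $b/a$ sits relative to $s_1,s_2,s_3$ in the Farey/Stern--Brocot structure, and the spiralling direction only changes finitely many coordinates in a controlled way (compare the formulas appearing in Theorem~\ref{markov main}).

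Next I would prove the key separation lemma: for every allowable curve $\lambda$ there is a triangulation $T$ and an arc $\gamma\in T$ with $b_\gamma(T,\lambda)\neq 0$, and moreover one can choose $T$ so that the slope of $\lambda$ is ``extremal'' with respect to the Farey triple of $T$---concretely, choose $T$ whose Farey triple has $\lambda$'s slope as one of its entries (if $\lambda$ is closed) or adjacent to one of its entries (if $\lambda$ spirals), so that the sign pattern of $\b(T,\lambda)$ is as simple as possible. The point is that distinct allowable curves have distinct ``Farey addresses,'' so given a finite support $\{\lambda_1,\dots,\lambda_m\}$ of a null tangle, I can pick among the $\lambda_i$ one whose slope is extremal in the Farey tree---say, the one whose standard form $b/a$ has the largest value of $a+b$, or more precisely one lying on the boundary of the convex hull of the addresses---and then find a triangulation $T$ and arc $\gamma$ where $b_\gamma(T,\lambda_i)$ is strictly positive (or strictly negative) while $b_\gamma(T,\lambda_j)$ has the opposite (weak) sign for all $j\neq i$. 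Proposition~\ref{one positive or one negative} then forces $w_{\lambda_i}=0$, contradicting that $\lambda_i$ is in the support---unless the support was empty to begin with. Running this as an induction on $|\supp\Xi|$, or equivalently as a minimal-counterexample argument, finishes the proof.

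The main obstacle I anticipate is the separation step in the presence of spiralling curves. Two curves with the \emph{same} underlying slope but different spiralling data have shear coordinate vectors that agree in all but a bounded number of entries, so the ``extremal slope'' argument does not by itself separate them; I would need a finer invariant---examining the triangulations $T$ whose Farey triple is adjacent to that common slope on \emph{both} sides, where the spiralling directions produce genuinely different $\pm1$ contributions---and check that the sign patterns there still allow the Proposition~\ref{one positive or one negative} dichotomy to be applied to peel off one curve at a time. A secondary technical point is handling the closed curve of a given slope versus the spiralling curves of adjacent slope: these can have overlapping sign patterns, and one must verify that there is always \emph{some} triangulation witnessing strict positivity (or negativity) for a chosen curve against all others in a given finite collection. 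I expect that organizing the curves by their Farey address and doing the induction by removing a curve whose address is a ``leaf'' relative to the finite set of addresses present will make both difficulties manageable, but verifying the sign bookkeeping in the spiralling cases is the part that will require the most care.
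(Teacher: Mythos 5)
Your outline for eliminating the spiraling curves is essentially the paper's own first step (Proposition~\ref{null tangle lines only}): a spiral forces a nonzero shear coordinate on the arc of the curve's own slope $f/e$, and Lemma~\ref{Farey neighbor} supplies a triangulation whose other two arcs have slopes squeezed into the gap below $f/e$ left by the finite support, so Proposition~\ref{one positive or one negative} kills the weight. That part of your plan is sound.

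The gap is in the closed curves, and it is exactly the difficulty you flag but only hope to overcome. For a closed curve of rational slope $q$ the shear coordinate on the arc of slope $q$ is zero, so you must separate using an arc $\gamma$ of some other slope; the set of slopes whose closed curves have strictly positive (resp.\ strictly negative) coordinate on $\gamma$ is then an open interval of slopes whose endpoints are the slope of $\gamma$ and the slope of the flip of $\gamma$ (cf.\ Lemma~\ref{slope ineq lemma}). A count in the style of Lemma~\ref{diophantine lemma} shows that only finitely many such intervals both contain the fixed rational $q$ and are narrow around it---this is precisely why Lemma~\ref{tech lemma} must assume $y$ irrational. Consequently, once the support contains closed curves whose slopes lie inside all of those finitely many narrow intervals on both sides of $q$, the curve of slope $q$ can never be exhibited as the unique strictly positive or strictly negative curve on any arc of any triangulation, and the ``peel off an extremal curve'' induction stalls; there is no reason a finite support must contain a curve that \emph{can} be peeled. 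The paper does not attempt this route: for closed curves it abandons Proposition~\ref{one positive or one negative} entirely, converts the null tangle into a $B$-coherent linear relation among vectors in the plane $x+y+z=0$ via Proposition~\ref{null tangle is B coher rel}, computes the mutation maps $\eta_{12}^B$ and $\eta_{21}^B$ explicitly on the cones $D_j$ of that plane, and runs a telescoping-sum argument using both \eqref{linear eta} and \eqref{piecewise eta} along the mutation sequences $1212\cdots$ to force each weight to vanish. Your proposal contains no substitute for this second, essential argument, so as written it does not prove the theorem.
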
 
Both results quoted below in this section are obtained by combining Theorem~\ref{torus null tangle} with results of \cite{unisurface} that are conditional on the Null Tangle Property.
Although we do not prove Theorem~\ref{torus null tangle} until Section~\ref{null tangle sec}, we do not use it or either of the results below until Section~\ref{fan sec} and later.

%The Null Tangle Property is the crucial property of a marked surface with regard to universal geometric coefficients.
The specialization of \cite[Theorem~7.3]{unisurface} to the once-punctured torus says that the Null Tangle Property is equivalent to the following statement:
The shear coordinates of allowable curves form a positive $R$-basis for $B(T)$, where $T$ is any triangulation and $R$ is $\integers$ or $\rationals$.
Thus Theorem~\ref{torus null tangle} has the following corollary.
\begin{cor}\label{torus pos basis}
If $T$ is a triangulation of $(\S,p)$ and $R$ is $\integers$ or $\rationals$, then the shear coordinates $\b(T,\lambda)$ of allowable curves $\lambda$ constitute a positive $R$-basis for $B(T)$.
\end{cor}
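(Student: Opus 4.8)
The plan is to obtain this statement as an immediate consequence of Theorem~\ref{torus null tangle} together with the specialization of \cite[Theorem~7.3]{unisurface} recalled in the paragraph just before the statement. There is essentially no independent argument to make: the corollary is a formal combination of these two facts.

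Concretely, by Theorem~\ref{torus null tangle} the once-punctured torus $(\S,p)$ has the Null Tangle Property. The specialization of \cite[Theorem~7.3]{unisurface} asserts that, for $(\S,p)$, the Null Tangle Property is equivalent to the assertion that for every triangulation $T$ and for $R$ equal to $\integers$ or $\rationals$, the shear coordinates $\b(T,\lambda)$ of allowable curves $\lambda$ form a positive $R$-basis for $B(T)$. Applying the implication from the Null Tangle Property to this assertion yields exactly the corollary. Note that, since the cited theorem is stated directly in terms of an arbitrary triangulation $T$ and its matrix $B(T)$, there is no need to reduce to the fixed representative $B$, even though $B(T)=\pm B$ for every $T$.

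The genuine content lies entirely in Theorem~\ref{torus null tangle}, which is proved in Section~\ref{null tangle sec}; that is where I expect the real obstacle. For the corollary itself, the only items to verify are bookkeeping: that the once-punctured torus is within the scope of \cite[Theorem~7.3]{unisurface}, and that the notions of allowable curve, shear coordinate vector $\b(T,\lambda)$, and positive $R$-basis used there coincide with those set up in Sections~2 and~3 here. This coincidence is exactly what the discussion in Section~3 records, where quasi-laminations are identified with the laminations of \cite{cats2} for this particular surface. No computation is needed.
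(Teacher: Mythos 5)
Your proposal matches the paper exactly: the corollary is obtained by combining Theorem~\ref{torus null tangle} with the specialization of \cite[Theorem~7.3]{unisurface} stated in the preceding paragraph, with no further argument needed. This is precisely how the paper derives it.
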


Fix a triangulation $T$ of $(\S,p)$.
For each set $\Lambda$ of pairwise compatible allowable curves in $(\S,p)$, let $C_\Lambda$ be the nonnegative $\reals$-linear span of the vectors $\set{\b(T,\lambda):\lambda\in\Lambda}$.
Let $\F_\rationals(T)$ be the collection of all such cones $C_\Lambda$.
In \cite[Theorem~4.10]{unisurface}, this collection is shown to be a fan and, more specifically, to be the rational part of $\F_B$ in the sense of \cite[Definition~4.9]{unisurface}.
Rather than stating this definition, we summarize parts of \cite[Theorem~4.10]{unisurface} and its corollary \cite[Corollary~4.11]{unisurface} in the following proposition, which is stated in the special case of the once-punctured torus.
The hypotheses of \cite[Theorem~4.10]{unisurface} and \cite[Corollary~4.11]{unisurface} include the assumption that the marked surface has a property called the Curve Separation Property.
However, \cite[Corollary~7.12]{unisurface} states that the Null Tangle Property implies the Curve Separation Property, so by Theorem~\ref{torus null tangle}, these results apply to the once-punctured torus.

\begin{prop}\label{rat FB summary}
Let $T$ be a triangulation of $(\S,p)$.
The collection $\F_\rationals(T)$ is a rational, simplicial fan.
Each cone in $\F_\rationals(T)$ is contained in a cone of $\F_{B(T)}$.
Each rational $B(T)$-cone is a cone in $\F_\rationals(T)$.
The full-dimensional $B(T)$-cones are exactly the full-dimensional cones in~$\F_\rationals(T)$.
\end{prop}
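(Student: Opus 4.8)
The plan is to recognize Proposition~\ref{rat FB summary} as nothing more than the specialization to the once-punctured torus of \cite[Theorem~4.10]{unisurface} together with \cite[Corollary~4.11]{unisurface}, so that the only work is to check that the once-punctured torus satisfies the hypotheses of those results and then to read off each of the four assertions from their conclusions. There is no new combinatorial or geometric content to produce here; the substantive input has been quarantined into Theorem~\ref{torus null tangle} (proved in Section~\ref{null tangle sec}) and into the general surface theory of \cite{unisurface}.

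First I would dispense with the hypotheses. The cited results of \cite{unisurface} apply to a marked surface provided it satisfies the Curve Separation Property (in addition to the standing assumptions on marked surfaces, which the once-punctured torus plainly meets). By Theorem~\ref{torus null tangle} the once-punctured torus has the Null Tangle Property, and by \cite[Corollary~7.12]{unisurface} the Null Tangle Property implies the Curve Separation Property; hence \cite[Theorem~4.10]{unisurface} and \cite[Corollary~4.11]{unisurface} are available. I would also point out, as already recorded earlier in this section, that for the once-punctured torus the quasi-lamination notion of allowable curve and of compatibility used in \cite{unisurface} coincides with the notions used here, so that the cones $C_\Lambda$ and the collection $\F_\rationals(T)$ defined above are literally the same objects as in \cite{unisurface}.

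Next I would extract the four statements. That $\F_\rationals(T)$ is a rational, simplicial fan is part of \cite[Theorem~4.10]{unisurface}. That $\F_\rationals(T)$ is precisely the rational part of $\F_{B(T)}$ in the sense of \cite[Definition~4.9]{unisurface} is the main content of \cite[Theorem~4.10]{unisurface}; from that identification it is immediate both that every cone of $\F_\rationals(T)$ lies in a $B(T)$-cone and, conversely, that every rational $B(T)$-cone occurs as a cone of $\F_\rationals(T)$ — this is exactly \cite[Corollary~4.11]{unisurface}. The last assertion, that the full-dimensional $B(T)$-cones are precisely the full-dimensional cones of $\F_\rationals(T)$, then follows by the same dimension comparison: a full-dimensional $B(T)$-cone is a (closed, convex) cone that must contain — hence equal — a full-dimensional cone of the fan $\F_\rationals(T)$ it refines, and conversely; this too is recorded in \cite[Corollary~4.11]{unisurface}.

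The only place I expect to need care is bookkeeping rather than mathematics: confirming that the once-punctured torus is not among any surfaces explicitly excluded from \cite[Theorem~4.10]{unisurface}, and double-checking that the translation between ``quasi-laminations'' and ``allowable curves'' leaves the span cones $C_\Lambda$, and therefore the fan $\F_\rationals(T)$, unchanged. Once the conventions of \cite{unisurface} are lined up, the proof is a direct citation; the genuine obstacle has already been surmounted in proving Theorem~\ref{torus null tangle}.
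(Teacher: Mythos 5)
Your proposal matches the paper's treatment exactly: Proposition~\ref{rat FB summary} is presented there as a direct specialization of \cite[Theorem~4.10]{unisurface} and \cite[Corollary~4.11]{unisurface}, with the Curve Separation Property hypothesis supplied by Theorem~\ref{torus null tangle} via \cite[Corollary~7.12]{unisurface}, and with the observation that for the once-punctured torus the quasi-lamination notions coincide with the ones used here. No further argument is needed.
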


\section{Arcs and allowable curves}\label{arc curve sec}
In this section, we prepare for the proof of Theorem~\ref{torus null tangle} by classifying arcs and allowable curves in $(\S,p)$.
Recall the usual construction of the torus as a quotient of its universal cover $\reals^2$:
We identify points $[x_1,\,y_1]$ and $[x_2,\,y_2]$ in $\reals^2$ if the fractional parts of $x_1$ and $x_2$ coincide and the fractional parts of $y_1$ and $y_2$ coincide.
We assume that the marked point $p$ is the image of $[0,\,0]$ under the covering map

\begin{prop}\label{torus arcs}
The arcs in $(\S,p)$ are the images, under the covering map, of straight line segments connecting the origin to nonzero integer points and containing no integer points in their interior.
%SinceSLC:  Fixed the next sentence, which used to read "Each image is a distinct arc up to isotopy." when in fact two segments map to each arc.
The map from segments to arcs is two-to-one, with antipodal pairs of segments mapping to the same arc.
%The segment to $[a,b]$ and the segment to $[c,d]$ map to the same arc, up to isotopy, if and only if $[c,d]=-[a,b]$.
%Each image is a distinct arc up to isotopy.  
\end{prop}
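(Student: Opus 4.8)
The plan is to pass back and forth between the once-punctured torus and its universal cover $\reals^2$, using the covering map $\pi\colon\reals^2\to\S$ to translate the topological notion of an arc into elementary arithmetic of integer lattice points. First I would note that any arc $\alpha$ in $(\S,p)$, being a curve from $p$ to $p$, lifts to a path in $\reals^2$ from the origin $[0,0]$ to some other lift $\pi^{-1}(p)=\integers^2$ of $p$; call the endpoint $[a,b]\in\integers^2\setminus\set{[0,0]}$. The key claim is that $\alpha$ is isotopic (rel $p$) to the image of the straight segment from $[0,0]$ to $[a,b]$ exactly when $[a,b]$ is a \emph{primitive} integer vector, i.e.\ the segment contains no interior integer points, equivalently $\gcd(a,b)=1$ (interpreting this to include the cases $[a,b]=[\pm1,0]$ and $[0,\pm1]$). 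The three things to verify are: (i) the image of such a primitive segment really is an arc (no self-intersections, not bounding an unpunctured monogon); (ii) every arc is isotopic to the image of such a segment; and (iii) distinct primitive vectors give non-isotopic arcs.

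For (i): a straight segment in $\reals^2$ obviously has no self-intersections, and I would check that its image in $\S$ has no self-intersections precisely because the segment meets $\integers^2$ only at its two endpoints — any self-intersection of the image would come from two distinct parameter values mapping to the same point of $\S$, i.e.\ from two points of the segment differing by a nonzero element of $\integers^2$, and for a primitive $[a,b]$ this forces both points to be endpoints. The monogon exclusion is handled by observing that the excluded arc would be the one bounding a disk, but no nonzero straight segment's image does so on the torus (the torus has no contractible simple loops through $p$ of that form). For (ii): given an arbitrary arc $\alpha$, lift it to a path $\tilde\alpha$ from $[0,0]$ to some $[a,b]\in\integers^2\setminus\set{0}$. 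I would first argue $[a,b]$ can be taken primitive: if $[a,b]=k[a',b']$ with $k\ge2$ and $[a',b']$ primitive, then the lifted path $\tilde\alpha$ would have to pass near — and an isotopy/general-position argument shows actually hit, or be forced to self-intersect around — the interior lattice point; more carefully, a simple closed-up argument shows that a simple arc on the torus whose lift ends at a non-primitive vector must have a self-intersection, contradicting simplicity. (This is the standard fact that simple closed curves / arcs on the torus correspond to primitive homology classes.) Then, with $[a,b]$ primitive, both $\tilde\alpha$ and the straight segment $\sigma$ from $[0,0]$ to $[a,b]$ are simple paths in $\reals^2$ with the same endpoints, and by the Jordan–Schoenflies theorem (or just because $\reals^2$ is simply connected, rel the $\integers^2$-lattice which the interiors of both avoid after a small isotopy) they are isotopic in $\reals^2\setminus(\integers^2\setminus\set{[0,0],[a,b]})$; such an isotopy descends through $\pi$ to an isotopy in $(\S,p)$, once one checks the isotopy can be taken equivariant or at least small enough to project without collision. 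For (iii): if $\pi\circ\sigma_{[a,b]}$ and $\pi\circ\sigma_{[c,d]}$ are isotopic arcs, lifting the isotopy gives a homotopy rel endpoints in $\reals^2$ between $\sigma_{[a,b]}$ and a lift of $\sigma_{[c,d]}$; since $\reals^2$ is simply connected this forces the endpoint lifts to agree, hence $[a,b]=[c,d]$ (the lift starting at $[0,0]$ is unique).

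I would organize the write-up as: set up the covering map and the unique path-lifting of arcs starting at $[0,0]$; prove the primitivity of the endpoint (this is where I'd invoke that a simple arc forces a primitive class, citing a standard reference on curves on the torus if a self-contained argument is too long); prove straight segments with primitive endpoints project to arcs; prove the projection of $\tilde\alpha$ is isotopic to the projection of the straightened segment; and finally prove injectivity on primitive vectors. The main obstacle I anticipate is \textbf{the primitivity step in direction (ii)} — showing that an arbitrary simple arc cannot lift to a path ending at a non-primitive integer vector. The straight-line straightening and the injectivity are essentially formal consequences of $\reals^2$ being the universal cover; but ruling out non-primitive endpoints genuinely uses that $\alpha$ is \emph{embedded}, and the cleanest argument is probably to note that if $[a,b]=k[a',b']$ then the path $\tilde\alpha$ together with its translate $\tilde\alpha+[a',b']$ would have to cross (an intermediate-value / winding argument in the strip between the lines), and a crossing of translates of $\tilde\alpha$ projects to a self-intersection of $\alpha$. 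I would make sure to state clearly which topological black boxes (path lifting, Jordan–Schoenflies, classification of simple closed curves on the torus) are being used, since $(\S,p)$ is low-dimensional enough that all of these are standard.
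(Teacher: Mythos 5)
There is a genuine gap, and it sits exactly where you declare the argument to be ``essentially formal'': the straightening step in (ii). You claim that once the lift $\tilde\alpha$ and the straight segment $\sigma$ share primitive endpoints, they are isotopic in $\reals^2\setminus(\integers^2\setminus\set{[0,0],[a,b]})$ ``by Jordan--Schoenflies, or just because $\reals^2$ is simply connected.'' This is false as stated: simple connectivity of $\reals^2$ gives a homotopy rel endpoints in $\reals^2$, not in the complement of the lattice, and two simple arcs with the same endpoints need not be isotopic in a punctured plane. Concretely, a simple arc from $[0,\,0]$ to $[1,\,0]$ that loops over the point $[0,\,1]$ is homotopic to the straight segment in $\reals^2$ but not in $\reals^2\setminus\set{[0,\,1]}$, and its projection is not isotopic (rel $p$) to the projection of the segment. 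What rules out such winding is precisely the hypothesis that $\tilde\alpha$ is disjoint from all of its nonzero integer translates, i.e.\ the embeddedness of $\alpha$ downstairs --- the same hypothesis you correctly identify as essential for primitivity. The paper's proof spends most of its length on exactly this point: it encodes both $\bar\gamma$ and $\lambda$ as words in two letters $\r,\t$ recording which side of each unit square the curve exits, and shows the two words must agree by a translation argument (if they differed, there would be an integer point on $\lambda$ or strictly between the two graphs, and iterating the translate-by-$\w$ comparison of the disjoint graphs $\bar\gamma$ and $\bar\gamma+\w$ forces $\bar\gamma$ to overshoot its own endpoint, or forces an integer point twice as far from $\lambda$ as a maximally distant one). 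Your write-up omits any substitute for this argument, so the central step is unproved.

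Your other pieces are fine and match the paper in spirit: the forward direction (primitive segments project to embedded arcs, distinct endpoints give distinct isotopy classes) is the same lifting argument the paper uses, and your idea for primitivity --- that a non-primitive endpoint forces $\tilde\alpha$ to cross one of its translates --- is essentially the paper's first contradiction case, though you would need to make the ``intermediate-value / winding'' step precise (the paper does this by first deforming $\bar\gamma$ into the graph of an increasing function and comparing it with its translate by the interior lattice point). To repair the proposal, you should fold the straightening and the primitivity into a single argument that exploits disjointness from translates, rather than treating straightening as a consequence of general position in the plane.
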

\begin{proof}
Arcs in $(\S,p)$ are in particular elements of the fundamental group of $\S$ based at the point $p$.
A non-trivial element of the fundamental group represents an arc in $\S$ if and only if (up to homotopy relative to $p$) it is non-self-intersecting except that the endpoints coincide.
The fundamental group of the torus consists of the projections of curves connecting the origin to points in $\integers^2$, and two fundamental group elements coincide if and only if they connect the origin to the same point.
The images of straight line segments described in the proposition are thus all distinct up to homotopy and it is immediate that each image is non-self-intersecting, except that the endpoints coincide.

It remains to show that every arc is represented by the projection of a line segment.
Let $\gamma$ be an arc and consider a lift $\bar\gamma$ of $\gamma$ having an endpoint at $[0,\,0]$.
Then $\bar\gamma$ is disjoint from all other lifts of $\gamma$ (i.e.\ from all integer translates of $\bar\gamma$), except for coinciding endpoints.
We break $\reals^2$ into unit squares with integer vertices and describe $\bar\gamma$, up to homotopy in $\reals^2\setminus\integers^2$, by the sequence of squares it visits.
We can assume that $\bar\gamma$ never enters a square and leaves through the same side.
(Otherwise, we continuously deform $\bar\gamma$ so that it doesn't enter the square, and we can do this without losing the property that $\bar\gamma$ is disjoint from all of its integer translates.)
Similarly, we can assume that after $\bar\gamma$ leaves $[0,\,0]$ and enters the interior of some square, it leaves that square through a side not containing $[0,\,0]$.
We can also assume the analogous condition at the other endpoint of $\bar\gamma$.
By symmetry, we can assume that $\bar\gamma$ leaves $[0,\,0]$, enters the square shared by $[0,\,0]$ and $[1,\,1]$ and leaves this square through the top side or right side or terminates at one of the vertices of the square.
In the latter case, we are done.
If $\bar\gamma$ leaves the square through the top or right side, then it enters some other square.
Because $\bar\gamma$ is disjoint from all its translates, we see that $\bar\gamma$ must either terminate at the top-right vertex of the new square or exit the new square through the top or right edge.
The same is true each time $\bar\gamma$ enters a new square:  it terminates in the top-right vertex or leaves through the top or right edge.
Thus $\bar\gamma$ determines a word in the letters $\r$ and $\t$, with $\r$ standing for exiting a square through the right and $\t$ standing for exiting through the right.

Let $\v$ be the endpoint of $\bar\gamma$ other than $[0,\,0]$, and consider the straight line segment $\lambda$ connecting $[0,\,0]$ to $\v$.
This line segment determines a word in the same way, except that in addition to $\r$ and $\t$ we allow a third letter $\c$ in the word for the case when $\lambda$ leaves a square through its top-right corner without terminating there.
%SinceSLC:  Replaced the following two sentences with something clearer.   :CLSecnis%
%If the words for $\bar\gamma$ and $\lambda$ coincide, then there is a homeomorphism of the square, homotopic to the identity, fixing corners, and agreeing on opposite sides, that takes the projection of $\bar\gamma$ to the projection of~$\lambda$.  
%Identifying opposite sides, we obtain an isotopy from $\gamma$ to the projection of $\lambda$.
If the words for $\bar\gamma$ and $\lambda$ coincide, then there is a homeomorphism of the plane, homotopic to the identity, fixing the integer lattice, and commuting with integer translations, that takes $\bar\gamma$ to~$\lambda$.  
Descending to the torus, we obtain an isotopy from $\gamma$ to the projection of $\lambda$.

If the two words are different, then we can still continuously deform $\bar\gamma$ to a non-self-intersecting polygonal path composed of straight segments with finite positive slopes connecting different sides of squares.
Concatenating $\bar\gamma$ with its translates by integer multiples of $\v$, we obtain the graph of a strictly increasing periodic piecewise-linear function.
Doing the same with $\lambda$, we obtain a strictly increasing linear function.
We reuse the symbols $\bar\gamma$ and $\lambda$ for these functions.
Since the two words disagree, either there is an integer point in the interior of $\lambda$ or there is an integer point between the two graphs.

%SinceSLC:  Clarified the following sentence.  We need to choose $\w$ to be the "first" (closest to $[0,0]$) integer 
% point in the interior of $\lambda$ or else we can say there is some integer $k>1$ such that $k\w=\v$.  :CLSecnis%
%First suppose there is an integer point $\w$ in the interior of $\lambda$.
First, suppose there is an integer point in the interior of $\lambda$, and choose $\w$ to be the integer point in the interior closest to $[0,0]$.
There is some integer $k>1$ such that $k\w=\v$.  
Since $\bar\gamma$ has the same endpoints as $\lambda$ and has no integer points in its interior, we can assume by symmetry that $\bar\gamma$ passes above the point $\w$.
Translating the graph of $\bar\gamma$ by $\w$, we obtain the graph of another strictly increasing function $\bar\gamma'$, and these two graphs are disjoint.
Thus since $\bar\gamma$ passes above $\w$ and $\bar\gamma'$ contains $\w$, we see that $\bar\gamma'$ passes above $2\w$.
Thus $\bar\gamma$ passes above $2\w$, and thus $\bar\gamma'$ passes above $3\w$.
Continuing in this manner, we conclude that the graph $\bar\gamma$ passes above $k\w$, contradicting the fact that $\v=k\w$ is on the curve $\bar\gamma$.

Now suppose there is an integer point $\w$ between the two graphs.
Among integer points between the two graphs, choose a point $\w$ that maximizes the distance from $\w$ to the line $\lambda$.
By symmetry, we may as well assume that $\w$ is above $\lambda$.
Since $\w$ is between the line $\lambda$ and the graph of $\bar\gamma$, we see that $\bar\gamma$ is above $\bar\gamma'$ (for $\bar\gamma'$ as in the previous paragraph).
Thus $\bar\gamma'$ is above the point $\w+\w$, so $\bar\gamma$ is above $\w+\w$, but the point $\w+\w$ is twice as far from $\lambda$ as the point $\w$.

In either case, we have a contradiction, and we conclude that the two words must be the same.
Thus $\gamma$ is isotopic to the projection of $\lambda$.
Furthermore, the word for $\lambda$ avoids the letter $\c$, so the line segment $\lambda$ has no integer points in its interior.
\end{proof}

The arcs constructed in Proposition~\ref{torus arcs} are indexed by rational slopes, including the infinite slope.
To specify these slopes, and thus these arcs, we introduce some (mostly standard) terminology.
A \newword{Farey point} is an integer vector $[a,\,b]$ that is a closest nonzero integer point to the origin on the line through the origin with slope $b/a$.
Taking the appropriate conventions for the $\gcd$ of not-necesarily-positive integers, this is the same as requiring that $\gcd(a,b)=1$.
A \newword{standard Farey point} satisfies the additional conditions that $a\ge0$ and that $b=1$ whenever $a=0$.
For each rational slope $q$ (including $\infty$), there are exactly two Farey points [a,\,b] such that $b/a=q$.
The \newword{standard form} of a (possibly infinite) rational slope is an expression $b/a$ such that $[a,\,b]$ is a standard Farey point.

The \newword{Farey ray} for a Farey point $[a,\,b]$ is the set $\set{c[a,\, b]:c\ge1}$.
\newword{Farey neighbors} are pairs of Farey points $[a,\,b]$ and $[c,\,d]$ satisfying the following three requirements: $a$ and $c$ have weakly the same sign, $b$ and $d$ have weakly the same sign, and $ad-bc=\pm1$.
A \newword{Farey triangle} is the convex hull of three Farey points which are pairwise Farey neighbors.
The following lemma justifies the name Farey triangle.
\begin{lemma}\label{Farey tri nondegenerate}
If three Farey points are pairwise Farey neighbors, then they are not collinear.
\end{lemma}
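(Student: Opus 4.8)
The plan is to translate the collinearity of the three points into a single determinant condition and then obtain a contradiction on parity. First I would fix notation: write the three Farey points as $[a_1,b_1]$, $[a_2,b_2]$, $[a_3,b_3]$ and, for each pair $i<j$, set $D_{ij}=a_ib_j-b_ia_j$. The Farey-neighbor hypothesis tells us nothing more than that $|D_{ij}|=1$ for each of the three pairs; in particular each $D_{ij}$ is an odd integer. (The ``weakly same sign'' conditions and the $\gcd=1$ conditions built into the definitions play no role in this lemma.)

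Next I would record the elementary algebraic identity
\[
D_{12}-D_{13}+D_{23}=(a_2-a_1)(b_3-b_1)-(b_2-b_1)(a_3-a_1),
\]
verified by expanding both sides. The right-hand side is precisely the $2\times 2$ determinant of the difference vectors $[a_2-a_1,\,b_2-b_1]$ and $[a_3-a_1,\,b_3-b_1]$, and this determinant vanishes exactly when the three points $[a_i,b_i]$ are collinear. Hence the three Farey points are collinear if and only if $D_{12}+D_{23}=D_{13}$.

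Finally I would close the argument by parity: $D_{12}+D_{23}$ is a sum of two odd integers, hence even, whereas $D_{13}=\pm 1$ is odd; so the equation $D_{12}+D_{23}=D_{13}$ cannot hold, and the three Farey points are not collinear.

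I do not expect any genuine obstacle. The only steps requiring a moment's care are checking the displayed identity (a routine expansion) and invoking the standard fact that vanishing of that determinant is equivalent to collinearity of the three points; once the identity is in hand, the conclusion is an immediate parity observation. It is perhaps worth noting that this is a lemma where most of the hypothesis—the sign compatibilities in the definition of Farey neighbors—is not used at all, only the unimodularity $D_{ij}=\pm 1$.
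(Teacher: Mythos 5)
Your proof is correct: the identity $D_{12}-D_{13}+D_{23}=(a_2-a_1)(b_3-b_1)-(b_2-b_1)(a_3-a_1)$ checks out by direct expansion, the right-hand side is the standard collinearity determinant, and the parity contradiction ($\pm1\pm1$ even versus $\pm1$ odd) is airtight. The paper's proof is a close cousin but routes the computation differently: it writes the third point as an affine combination $\lambda[a,\,b]+(1-\lambda)[c,\,d]$ of the first two (legitimate since $ad-bc=\pm1$ makes them linearly independent) and observes that the two remaining unimodularity conditions force $1-\lambda=\pm1$ and $\lambda=\pm1$ simultaneously, which is impossible --- at bottom the same parity obstruction, since $(1-\lambda)+\lambda=1$ is odd while a sum of two $\pm1$'s is even. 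Your version avoids introducing the parameter $\lambda$ and makes the mod-$2$ nature of the obstruction explicit, which is arguably cleaner; the paper's version makes visible exactly how each neighbor condition constrains the affine coordinates. Your closing remark that the sign-compatibility clauses in the definition of Farey neighbors are irrelevant here is also accurate --- the paper likewise uses only $ad-bc=\pm1$.
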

\begin{proof}
If Farey points $[a,\,b]$, $[c,\,d]$ and $[e,\,f]$ are collinear, then there exists a scalar $\lambda$ such that $[e,\,f]=\lambda[a,\,b]+(1-\lambda)[c,\,d]$.
The equations $af-be=\pm1$ and $cf-de=\pm1$ are constraints on $\lambda$.
Requiring also that $ad-bc=\pm1$, the constraints become $1-\lambda=\pm1$ and $\lambda=\pm1$, and this is a contradiction.
\end{proof}

Figure~\ref{Farey fig} shows the Farey points, triangles, and rays close to the origin.
\begin{figure}[ht]
\scalebox{1}{\includegraphics{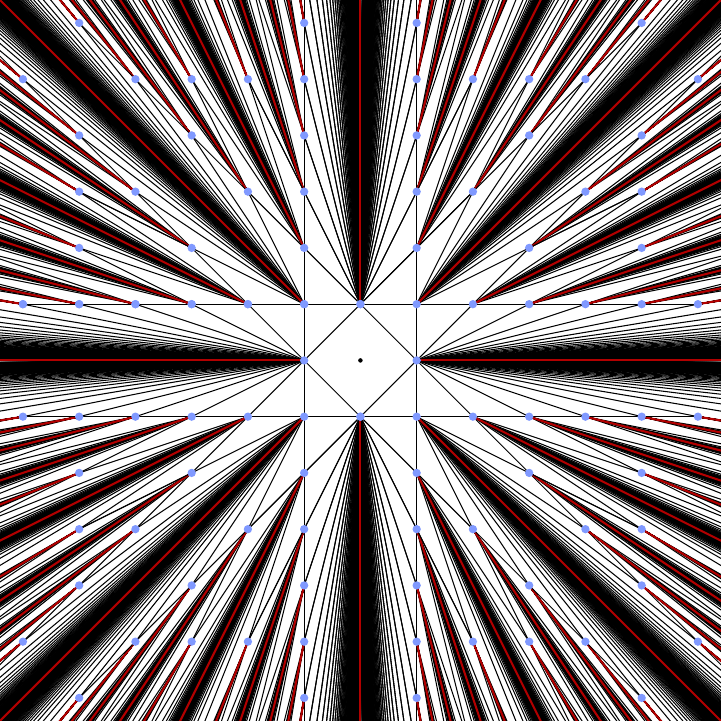}}
\caption{Farey points, triangles, and rays}
\label{Farey fig}
\end{figure}
Farey points are shown in blue and Farey rays are shown in red.
The black dot marks the origin.
(All lines in the figure are straight.  The apparent curvature of some lines is due to a well-known optical illusion.
See~\cite{HeringWiki}.)

Our interest in Farey neighbors stems from the following proposition.
\begin{prop}\label{torus arcs compatible}
Given two distinct arcs in $(\S,p)$, write the slopes of the corresponding line segments in standard forms $b/a$ and $d/c$ with $\frac ba<\frac dc$.
Then the arcs are compatible if and only if $ad-bc=1$.
\end{prop}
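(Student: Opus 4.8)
I plan to work in the universal cover $\reals^2$ and to use the standard fact that straight-line representatives of arcs on the flat torus are in minimal position. Write $P=[a,\,b]$ and $Q=[c,\,d]$ for the two standard Farey points, so that the two given arcs are the projections of the segment $L_1$ from the origin to $P$ and the segment $L_2$ from the origin to $Q$, and all lifts of the two arcs are the integer translates $L_1+\integers^2$ and $L_2+\integers^2$. Since $P$ and $Q$ are primitive with distinct slopes, $ad-bc$ is a nonzero integer, and the hypothesis $\tfrac ba<\tfrac dc$ together with the standard-form conventions ($a,c\ge0$, and numerator $1$ when the denominator is $0$) forces $ad-bc>0$. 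So the assertion to prove is equivalent to: the two arcs are compatible if and only if $|ad-bc|=1$.

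For the ``if'' direction I would use the action of $\GL_2(\integers)$ on the torus. If $ad-bc=1$, then $\left[\begin{smallmatrix}d&-c\\-b&a\end{smallmatrix}\right]\in\mathrm{SL}_2(\integers)$ sends $P\mapsto[1,\,0]$ and $Q\mapsto[0,\,1]$; it preserves $\integers^2$, hence descends to a self-homeomorphism of the torus fixing $p$ and carrying our two arcs to the projections of the segment from the origin to $[1,\,0]$ and the segment from the origin to $[0,\,1]$. One checks immediately that the integer translates of these last two segments meet only at integer points, so the horizontal and vertical arcs are compatible, and therefore so are the original two.

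For the ``only if'' direction I would argue the contrapositive. Choosing $N\in\GL_2(\integers)$ with $NP=[1,\,0]$ (possible since $P$ is primitive) and applying the induced homeomorphism — which preserves compatibility and preserves $|\det(P,Q)|$ — we may assume $P=[1,\,0]$ and $Q=[c,\,d]$ with $\gcd(c,d)=1$ and $|d|=|ad-bc|\ge2$. Then for each integer $n$ with $0<|n|<|d|$, the point of $L_2$ at height $n$ is $[\,cn/d,\,n\,]$, and $cn/d\notin\integers$ because $\gcd(c,d)=1$ forces $d\nmid n$; hence this point lies in the interior of the horizontal unit segment $[[m,\,n],\,[m+1,\,n]]$ for the appropriate $m$, which is a translate of $L_1$. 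Thus the straight-line representatives of the two arcs cross transversally. (Alternatively, one can avoid the reduction and count directly: $L_2$ meets $|ad-bc|-1$ of the slope-$\tfrac ba$ lattice lines strictly between its endpoints, and, since $Q$ is primitive, $L_2$ has no interior lattice points, so each such meeting is interior to a translate of $L_1$.)

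The remaining point — that such a transversal crossing survives every isotopy — is the step I expect to require the most care, though it is standard. I would invoke the bigon criterion: the straight representatives are in minimal position once one knows that in $\reals^2$ no lift of the first arc and lift of the second cobound a bigon, and no two such lifts sharing an endpoint (necessarily a lift of $p$) cobound a half-bigon. Two distinct straight segments in $\reals^2$ meet in at most one point, which excludes bigons; and a one-line computation using the linear independence of $P$ and $Q$ shows that a lift of $L_1$ and a lift of $L_2$ sharing an endpoint meet only at that endpoint, which excludes half-bigons. Hence the straight representatives realize the geometric intersection number of the two arcs, which is therefore positive, so the arcs are not compatible. (One may also simply cite the fact that flat geodesics realize geometric intersection numbers.)
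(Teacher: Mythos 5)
Your proof is correct, and it reaches the same underlying dichotomy as the paper --- compatibility holds exactly when all intersections of straight-line lifts occur at integer points --- but by a different route. The paper establishes the equivalence with $ad-bc=\pm1$ by writing down the intersection point of a line of slope $b/a$ through $[m_1,\,p_1]$ with a line of slope $d/c$ through $[m_2,\,p_2]$ in closed form and checking integrality directly (with judicious choices of the base points for the converse). You instead exploit the $\GL_2(\integers)$-action on the torus to normalize one arc to the horizontal one, after which both directions become one-line observations; this is cleaner and makes the role of primitivity of $[c,\,d]$ transparent. You also supply a step the paper takes for granted: that the straight-line representatives are in minimal position, so that a transversal crossing of lifts genuinely obstructs compatibility. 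Your bigon/half-bigon argument for this is correct and is a worthwhile addition; the paper simply asserts the reduction (``it is enough to show that \ldots''). The only nitpick is that in your crossing count you should restrict $n$ to integers strictly between $0$ and $d$ and of the same sign as $d$, so that the point $[cn/d,\,n]$ actually lies on the segment $L_2$; since $|d|\ge2$ at least one such $n$ exists, so the argument stands.
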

\begin{proof}
The condition $\frac ba<\frac dc$ is equivalent to $ad-bc>0$.
Thus it is enough to show that for $b/a\neq d/c$, the condition $ad-bc=\pm1$ is equivalent to the following condition:
Given a line with slope $b/a$ passing through an integer point and a line with slope $d/c$ passing through another integer point, the intersection of the two lines is an integer point.

Given a line with slope $b/a$ passing through the integer point $[m_1,\,p_1]$ and a line with slope $d/c$ passing through the integer point $[m_2,\,p_2]$, the intersection of the two lines is the integer point $[x,\,y]$ with 
\[x=\frac{adm_2-bcm_1+ac(p_1-p_2)}{ad-bc},\quad y=\frac{adp_1-bcp_2+bd(m_2-m_1)}{ad-bc}.\]
This is an integer point if $ad-bc=\pm1$.

On the other hand, suppose $ad-bc\neq\pm1$ and suppose for the sake of contradiction that all of the intersections of lines are integers.
Choose $m_1=1$ and $m_2=p_1=p_2=0$ so that $x=\frac{bc}{bc-ad}$ and $y=\frac{bd}{bc-ad}$ are both integers.
Then $bc$ and $bd$ are both divisible by $ad-bc$, but since $c$ and $d$ have no common factor, we conclude that $b$ is a multiple of $ad-bc$.
Choosing $p_1=1$ and $m_1=m_2=p_2=0$, we see that $a$ is also a multiple of $ad-bc$.
This is a contradiction because $b/a$ is in standard form.
\end{proof}

In light of Proposition~\ref{torus arcs compatible}, if we identify antipodal Farey points and Farey triangles in Figure~\ref{Farey fig}, we obtain a representation of the arc complex of the punctured torus.
A representation of the arc complex appears later in Figure~\ref{markov g fig}.

Having classified arcs and determined the arc complex, we are able to describe the allowable curves in $(\S,p)$.
First we find the allowable closed curves.

\begin{prop}\label{torus closed}
The allowable closed curves in $(\S,p)$ are the images, under the covering map, of straight lines in $\reals^2\setminus\integers^2$ with rational (or infinite) slope.
Two such images are isotopic if and only if the corresponding lines have the same slope.
\end{prop}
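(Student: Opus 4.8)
The plan is to mirror the structure of the proof of Proposition~\ref{torus arcs}, but without the requirement that the curve pass through the puncture; since an allowable closed curve is a non-self-intersecting closed curve in $\S\setminus\set{p}$ that is not contractible, its homotopy class is again an element of $\pi_1(\S)\cong\integers^2$, and the non-contractible classes are exactly the nonzero vectors $[a,\,b]\in\integers^2$. A straight line in $\reals^2$ with rational slope $b/a$ (in standard form), avoiding $\integers^2$, projects to a closed curve on the torus representing $\pm[a,\,b]$ (up to which direction one travels), and it is clearly non-self-intersecting. Conversely, each such line misses the puncture precisely because it misses $\integers^2$, so each gives an allowable closed curve; two parallel lines of the same rational slope are related by a translation of $\reals^2$ that descends to the torus and hence are isotopic, and this accounts for the ``if'' direction of the second sentence.

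For the main direction I would take an allowable closed curve $\lambda$ and a lift $\bar\lambda$ to $\reals^2$. The key point is that $\bar\lambda$, together with all its integer translates, forms a disjoint family of properly embedded lines in $\reals^2$ (disjointness of the translates encodes that $\lambda$ has no self-intersections on the torus, and properness/non-compactness encodes that $\lambda$ is not null-homotopic). Let $[a,\,b]$ be the homology class, so that $\bar\lambda$ is invariant under translation by $[a,\,b]$ and by no smaller integer vector. I would then run the same square-visiting argument as in Proposition~\ref{torus arcs}: after an isotopy in $\reals^2\setminus\integers^2$ I can assume $\bar\lambda$ never enters a unit square and leaves through the same side, so it becomes (the graph of) a strictly monotone periodic piecewise-linear function with period vector $[a,\,b]$ (handling the horizontal and vertical slopes $b=0$ or $a=0$ as degenerate special cases directly). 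Comparing $\bar\lambda$ with the straight line $L$ through a suitable point with slope $b/a$, I would argue — exactly as in Proposition~\ref{torus arcs} — that if some integer point lay strictly between $\bar\lambda$ and $L$, then choosing one at maximal distance from $L$ and translating by $[a,\,b]$ repeatedly (using disjointness of $\bar\lambda$ from its translates) forces integer points arbitrarily far from $L$, a contradiction; and since $L$ is a line through a point of $\integers^2$ translated off $\integers^2$, there are no integer points on $L$ itself. Hence $\bar\lambda$ is isotopic, rel $\integers^2$, to $L$, so $\lambda$ is the projection of a rational-slope line missing $\integers^2$.

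Finally, for the ``only if'' in the last sentence: lines of different rational slopes project to closed curves representing non-proportional classes in $\pi_1(\S)\cong\integers^2$, hence non-homotopic, hence non-isotopic curves. I expect the main obstacle to be the same one as in Proposition~\ref{torus arcs}: making precise the reduction to a monotone piecewise-linear function and the ``push the integer point out to infinity'' argument, here adapted to the closed (periodic) setting rather than the arc (segment) setting, and in particular checking that the isotopies used never create intersections with $\integers^2$ and never create self-intersections of $\lambda$ on the torus. Since that argument is already carried out in detail for arcs, I would present this proof largely by reference to Proposition~\ref{torus arcs}, indicating the (minor) changes: no endpoint at the origin, invariance under a translation rather than fixed endpoints, and the trivial treatment of the two axis-parallel slopes.
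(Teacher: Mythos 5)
Your proposal is correct in outline, but it takes a genuinely different route from the paper. The paper does not redo the universal-cover argument at all: it observes that arcs and allowable closed curves are in canonical bijection via disjointness --- cutting $\S$ along an arc $\alpha$ leaves a cylinder containing a unique non-contractible simple closed curve up to isotopy, and cutting along a simple closed curve $\lambda$ leaves a cylinder containing $p$ and hence a unique arc --- and that the closed curve disjoint from the arc of slope $q$ is visibly the projection of a line of slope $q$. The classification then follows in three lines from Proposition~\ref{torus arcs}. Your approach instead re-runs the square-word/straightening argument of Proposition~\ref{torus arcs} on the bi-infinite periodic lift. This is workable and has the virtue of being self-contained (no appeal to the cylinder facts), but it forces you to re-verify several steps that are genuinely different in the periodic setting and that you only gesture at: (i) getting the monotone-word reduction started without an anchored endpoint at the origin (for an arc, the base point $[0,\,0]$ launches the induction ``exit top or right''; for a closed curve you must first argue from periodicity and disjointness from translates that some consistent direction exists); (ii) the ``push the integer point to infinity'' step uses translation by the offending integer point $\w$ itself, not by the period vector $[a,\,b]$ as you wrote; and (iii) primitivity of the class $[a,\,b]$, which you assert (``by no smaller integer vector'') but which requires the standard translate-disjointness argument --- though since the projected line depends only on the slope, non-primitivity would not actually change the conclusion. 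If you want the short proof, use the disjointness bijection; if you keep your route, these three points need to be written out rather than delegated to ``exactly as in Proposition~\ref{torus arcs}.''
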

\begin{proof}
We first observe that for each arc $\alpha$ in $(\S,p)$, there is a unique closed allowable curve $\lambda$ in $(\S,p)$ such that $\alpha$ and $\lambda$ do not intersect, and this correspondence is bijective.
Indeed, given an arc $\alpha$, the set $\S\setminus\alpha$ is a cylinder and admits exactly one non-contractible simple closed curve up to isotopy.
Similarly, given a non-contractible simple closed curve $\lambda$, the set $\S\setminus\lambda$ is a cylinder containing $p$.
Thus there is exactly one arc contained in $\S\setminus\lambda$.
If $\alpha$ is the image, under the projection map, of a line segment as in Proposition~\ref{torus arcs}, the unique allowable closed curve not intersecting $\alpha$ is the projection of a line with the same slope.
The present proposition thus follows from Proposition~\ref{torus arcs}.
\end{proof}

Recall that tagged arcs in $(\S,p)$ are obtained from ordinary arcs by either tagging both ends plain or tagging both ends notched.
Given a tagged arc $\alpha$ in $(\S,p)$, we define (up to isotopy) a curve $\kappa(\alpha)$, which coincides with $\alpha$ except in a small ball about $p$.
If $\alpha$ is tagged plain at $p$, then $\kappa(\alpha)$ spirals clockwise into $p$ at both ends, and if $\alpha$ is tagged notched at $p$, then $\kappa(\alpha)$ spirals counterclockwise into $p$ at both ends.
This is a special case of a map defined in \cite[Section~5]{unisurface} from tagged arcs to allowable curves in any marked surface $(\S,\M)$, and \cite[Lemma~5.1]{unisurface} states that $\kappa$ is a bijection from tagged arcs to allowable curves that are not closed and that two tagged arcs $\alpha$ and $\gamma$ are compatible if and only if the curves $\kappa(\alpha)$ and $\kappa(\gamma)$ are compatible.
Thus Propositions~\ref{torus arcs} and~\ref{torus closed} imply the following statement.

\begin{prop}\label{torus allowable}
The allowable curves in $(\S,p)$ are as follows:
\begin{enumerate}
\item 
All curves obtained from the arcs described in Proposition~\ref{torus arcs} by replacing both endpoints by clockwise spirals into $p$.
\item 
All curves obtained from the arcs described in Proposition~\ref{torus arcs} by replacing both endpoints by counterclockwise spirals into $p$.
\item 
All closed curves described in Proposition~\ref{torus closed}.
\end{enumerate}
\end{prop}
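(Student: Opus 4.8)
The plan is to reduce the statement to the classifications already established, splitting along the dichotomy in the definition of allowable curve: such a curve is either closed and non-contractible, or it spirals into $p$ at both ends.

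In the closed case there is nothing to do beyond citing Proposition~\ref{torus closed}, which identifies the allowable closed curves in $(\S,p)$ with the projections of rational- or infinite-slope lines in $\reals^2\setminus\integers^2$; these are exactly the curves in item~(3).

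In the non-closed case I would invoke \cite[Lemma~5.1]{unisurface}, which states that $\kappa$ is a bijection from the tagged arcs of $(\S,p)$ onto the non-closed allowable curves. It therefore suffices to list the tagged arcs and apply $\kappa$. A tagged arc of $(\S,p)$ is an ordinary arc together with a choice of tagging both ends plain or both ends notched, and by Proposition~\ref{torus arcs} the ordinary arcs are precisely the projections of the primitive integer segments from the origin described there. Unwinding the definition of $\kappa$ recalled immediately before the proposition (plain ends are replaced by clockwise spirals into $p$, notched ends by counterclockwise spirals), the image under $\kappa$ of the plain tagging of an ordinary arc $\alpha$ is the curve built from $\alpha$ as in item~(1), and the image of the notched tagging is the curve built from $\alpha$ as in item~(2). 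Because $\kappa$ is onto, items~(1) and~(2) together exhaust the non-closed allowable curves, which together with item~(3) gives the full list.

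I do not expect a genuine obstacle here, as the real work lives in Propositions~\ref{torus arcs} and~\ref{torus closed} and in \cite[Lemma~5.1]{unisurface}. The only points needing care are bookkeeping: matching the two spiral directions of $\kappa$ to items~(1) and~(2), and observing --- via the injectivity of $\kappa$ and the fact that the plain and notched taggings of an ordinary arc are distinct tagged arcs for $(\S,p)$ --- that items~(1) and~(2) are in fact disjoint families, each in natural bijection with the set of ordinary arcs, hence with the rational (and infinite) slopes.
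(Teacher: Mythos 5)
Your argument is exactly the paper's: the paper derives this proposition immediately from Proposition~\ref{torus closed} for the closed case and from the bijection $\kappa$ of \cite[Lemma~5.1]{unisurface} between tagged arcs (classified via Proposition~\ref{torus arcs} plus a choice of tagging) and non-closed allowable curves, with plain and notched taggings giving the clockwise and counterclockwise spirals respectively. Your bookkeeping matches, so there is nothing to add.
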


We write $\cl(a,b)$ for the \emph{closed} curve that is the projection of a line with slope in standard form $b/a$.
For the other allowable curves associated to $b/a$, we write $\cw(a,b)$ for the curve with spirals \emph{clockwise} into $p$ and $\ccw(a,b)$ for the curve with spirals \emph{counterclockwise} into $p$.

\begin{prop}\label{torus compatible}
The compatible pairs of allowable curves in $(\S,p)$ are:
\begin{enumerate}
\item $\cl(a,b)$ and $\cw(a,b)$ for any slope $b/a$. 
\item $\cl(a,b)$ and $\ccw(a,b)$ for any slope $b/a$. 
\item $\cw(a,b)$ and $\cw(c,d)$ for slopes $b/a$ and $d/c$ with $ad-bc=\pm1$.
\item $\ccw(a,b)$ and $\ccw(c,d)$ for slopes $b/a$ and $d/c$ with $ad-bc=\pm1$.
\end{enumerate}
\end{prop}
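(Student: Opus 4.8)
The plan is to run through the cases determined by the types --- closed, clockwise-spiraling, or counterclockwise-spiraling --- of the two curves. By Proposition~\ref{torus allowable} every allowable curve is of one of these three types and is specified by that type together with a slope $b/a$ in standard form, so there are six type-combinations to consider for an unordered pair of distinct curves, and the proposition asserts that exactly four of them yield compatible pairs, and only for the indicated slopes.

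First I would handle every pair in which at least one curve is closed. Cutting the torus along the arc of slope $b/a$ produces a cylinder whose core curve is $\cl(a,b)$; in particular $\cl(a,b)$ is disjoint from that arc (as already noted in the proof of Proposition~\ref{torus closed}), while any closed curve, or the underlying arc of any spiraling curve, of slope $d/c\neq b/a$ winds around that cylinder and hence meets the arc of slope $b/a$ --- indeed $\cl(a,b)$ has positive geometric intersection number, equal to $|ad-bc|$, with every allowable curve of slope $d/c\neq b/a$. Since the spirals of $\cw(a,b)$ and of $\ccw(a,b)$ agree with the arc of slope $b/a$ outside a small ball about $p$, and $p$ does not lie on $\cl(a,b)$, we obtain that $\{\cl(a,b),\cw(a,b)\}$ and $\{\cl(a,b),\ccw(a,b)\}$ are compatible, whereas $\cl(a,b)$ crosses every closed curve and every spiraling curve of a different slope. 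This establishes items (1) and (2) and rules out every other compatible pair containing a closed curve.

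For two non-closed curves I would pass through the bijection $\kappa$ from tagged arcs to non-closed allowable curves, using \cite[Lemma~5.1]{unisurface} to translate compatibility of tagged arcs into compatibility of curves. Two tagged arcs carrying the same tag --- both plain, which $\kappa$ sends to clockwise spirals, or both notched, which $\kappa$ sends to counterclockwise spirals --- are compatible precisely when their underlying arcs are, which by Proposition~\ref{torus arcs compatible} means $ad-bc=\pm1$; this yields items (3) and (4), and the two curves are then automatically distinct since $ad-bc=\pm1$ forces $b/a\neq d/c$. The one remaining possibility is one clockwise and one counterclockwise spiral. If the two slopes differ and $ad-bc\neq\pm1$ the underlying arcs already cross, so the substantive claim is that $\cw(a,b)$ and $\ccw(c,d)$ intersect whenever the underlying arcs of slopes $b/a$ and $d/c$ are compatible, in particular whenever $b/a=d/c$.

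Verifying that last claim is the one step I expect to require genuine care, since away from $p$ the two curves can be isotoped to be disjoint and the crossing must be detected near $p$. I would argue locally: in a small disk neighborhood of $p$ the two curves enter along fixed directions and then spiral into $p$ with opposite orientations, so in a suitable covering of the punctured disk the angular coordinate of one tends to $+\infty$ while that of the other tends to $-\infty$ as $p$ is approached; hence their angular difference is unbounded and the two curves must cross infinitely often near $p$. By contrast, two same-orientation spirals keep bounded angular difference and, over compatible underlying arcs, can be realized disjointly --- which is exactly the information already packaged in \cite[Lemma~5.1]{unisurface}. Granting the opposite-spiral crossing, no pair $\{\cw(a,b),\ccw(c,d)\}$ is compatible, and the list in the statement is complete.
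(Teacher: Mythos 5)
Your proposal is correct and follows essentially the same route as the paper: the paper disposes of all pairs involving a closed curve by the observation that closed curves are compatible with other curves exactly when the slopes coincide, and handles the spiraling curves by combining Proposition~\ref{torus arcs compatible} with the fact that $\kappa$ preserves compatibility. The only difference is that you verify the incompatibility of oppositely oriented spirals by a direct angular-coordinate argument near $p$, whereas the paper obtains it for free from the combinatorial compatibility rule for tagged arcs (opposite tags at the shared puncture) via the cited lemma.
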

\begin{proof}
Closed curves are compatible with other curves if and only if the corresponding slopes coincide.
The remainder of the proposition is a direct consequence of Proposition~\ref{torus arcs compatible} and the fact that two tagged arcs $\alpha$ and $\gamma$ are compatible if and only if the curves $\kappa(\alpha)$ and $\kappa(\gamma)$ are compatible.
\end{proof}

\section{Explicit shear coordinates}\label{expl shear}
In this section, we explicitly compute shear coordinates of allowable curves in the once-punctured torus.
The purpose of this explicit calculation is twofold. 
First, we will use the explicit coordinates to show that the once-punctured torus has the Null Tangle Property.
Second, the explicit coordinates appear in Theorem~\ref{markov main}.

For the remainder of the paper, let $T_0$ be the triangulation of $(\S,p)$ by three allowable arcs $\gamma_1$, $\gamma_2$, and $\gamma_3$, corresponding respectively to slopes $0$, $\infty$, and $-1$.
The signed adjacency matrix $B(T_0)$ is $B=\begin{bsmallmatrix*}[r]0&2&-2\\-2&0&2\\2&-2&0\\\end{bsmallmatrix*}$.  
The lifts of $\gamma_1$ to $\reals^2$ are the segments connecting integer points $[m,\,n]$ to adjacent points $[m+1,\,n]$.
Similarly, lifts of $\gamma_2$ connect $[m,\,n]$ to $[m,\,n+1]$ and lifts of $\gamma_3$ connect $[m,\,n]$ to $[m+1,\,n-1]$.

\begin{prop}\label{explicit coords}
The shear coordinates with respect to $T_0$ of allowable curves in $(\S,p)$~are as follows:
\begin{enumerate}
\item \label{gs}
The cyclic permutations of $[1-b,\,a+1,\,b-a-1]$ for Farey points $[a,\,b]$ with $a\ge 0$ and $b>0$.
(These are for curves with counterclockwise spirals.) 
\item \label{opp gs}
The cyclic permutations of $[-1-b,\,a-1,\,b-a+1]$ for Farey points $[a,\,b]$ with $a>0$ and $b\ge0$.
(These are for curves with clockwise spirals.) 
\item \label{equatorial}
The nonzero integer vectors $[x,\,y,\,z]$ with $x+y+z=0$ such that $x$, $y$, and $z$ have no common factors.
(These are for closed curves.)
\end{enumerate}
\end{prop}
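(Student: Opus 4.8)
The plan is to compute shear coordinates directly in the universal cover $\reals^2$, exploiting the linearity of the torus picture. First I would handle the closed curves, which is the easiest case. A closed curve $\cl(a,b)$ lifts to a family of parallel lines of slope $b/a$. To compute $b_{\gamma_i}(T_0,\cl(a,b))$ I would count signed crossings of such a line with the lifts of $\gamma_i$ (segments of the three types listed after the statement of the proposition), using the local picture in Figure~\ref{shear fig}: a crossing contributes $\pm1$ exactly when the curve ``zig-zags'' across the quadrilateral with diagonal $\gamma_i$, and $0$ when it goes ``straight through.'' Over one period, a line of slope $b/a$ (say with $a,b$ a Farey point) crosses $b$ lifts of $\gamma_1$ (the horizontal type), $a$ lifts of $\gamma_2$ (the vertical type), and $|a+b|$ lifts of $\gamma_3$ (the type going from $[m,n]$ to $[m+1,n-1]$); assigning the correct signs from the quadrilateral picture produces a vector $[x,y,z]$ with $x+y+z=0$ and $\gcd(x,y,z)=1$, and conversely every such vector arises this way because the three ``crossing numbers'' determine the slope. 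This establishes part~\eqref{equatorial}.

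Next I would treat the spiraling curves. The curve $\ccw(a,b)$ is obtained from the arc $\gamma$ of slope $b/a$ by replacing each endpoint at $p$ with a counterclockwise spiral; equivalently, by Proposition~\ref{torus closed} and the discussion of $\kappa$, it is a curve ``parallel and close'' to that arc but pushed off the puncture so that it spirals in. The key computational step is: the shear coordinate $\b(T_0,\ccw(a,b))$ equals the ``shear coordinate of the arc $\gamma$'' plus a correction term coming from the two spirals near $p$. The spirals interact only with the three arcs of $T_0$ incident to $p$ (all of them), and each full turn of a spiral picks up a fixed local contribution determined by which of $\gamma_1,\gamma_2,\gamma_3$ it crosses and in what rotational order; this is a bounded, slope-independent-in-structure contribution that I would read off once from a picture of the triangulated fundamental domain. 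I expect this to yield $\b(T_0,\ccw(a,b)) = [1-b,\,a+1,\,b-a-1]$ up to the cyclic symmetry (the cyclic symmetry of $B$ corresponds to the order-$3$ symmetry of $T_0$ permuting $\gamma_1,\gamma_2,\gamma_3$, hence permuting the coordinates cyclically), giving part~\eqref{gs}. Part~\eqref{opp gs} follows by the orientation-reversing symmetry: replacing counterclockwise spirals by clockwise spirals sends $\ccw(a,b)$ to $\cw(a,b)$ and, at the level of shear coordinates, corresponds to negating the matrix $B$ and hence to the substitution that turns $[1-b,\,a+1,\,b-a-1]$ into $[-1-b,\,a-1,\,b-a+1]$ (with the sign conditions on $a,b$ adjusting accordingly). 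I would also note that the enumeration ranges in \eqref{gs} and \eqref{opp gs} ($a\ge0,b>0$ versus $a>0,b\ge0$) exactly match the Farey points listed in Proposition~\ref{torus arcs} and account for the infinite slope in one case but not the other, matching the asymmetry already visible in Theorem~\ref{markov main}.

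The main obstacle is the careful bookkeeping of signs and the ``spiral correction term'': one must be sure that the $\pm1$ in Figure~\ref{shear fig} is assigned with the correct orientation at every crossing, and that the contribution of the two spirals near $p$ has been computed with the right handedness and the right count of how many of the three arcs each spiral arm crosses before settling down. A clean way to control this is to do the computation for the three ``base'' slopes $0,\infty,-1$ and for one or two small Farey neighbors by hand, verify that the answer is consistent with the claimed formula and with the compatibility relations of Proposition~\ref{torus compatible} (compatible curves must have shear vectors lying in a common cone of the fan, which is a strong consistency check), and then propagate to all slopes using the fact that any Farey point can be reached from $[1,0],[0,1],[-1,1]$ by a sequence of ``Farey mediant'' steps, under each of which both the slope data and the claimed shear vector transform linearly in a way one checks once. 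I would present the base-case computation and the inductive step, leaving the fully general crossing count as the routine verification it is.
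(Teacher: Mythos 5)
Your overall strategy---compute crossings directly in the universal cover, handle one ``fundamental domain'' of slopes, and propagate by the order-$3$ symmetry of $T_0$ and the orientation-reversing symmetry swapping clockwise and counterclockwise spirals---is the same as the paper's, and your identification of those two symmetries (cyclic permutation of coordinates; swap of the first two coordinates combined with negation) is exactly what the paper uses. The paper organizes the direct computation by encoding each lifted curve as a word in two letters ($\r$ for exiting a unit square on the right, $\t$ for exiting on top, with a fixed prefix and suffix accounting for the spirals) and summing a contribution $[0,1,-1]$, $[-1,0,0]$, $[0,1,0]$, or $[-1,0,1]$ over each consecutive pair of letters; counting the pairs in the two cases $b/a\le 1$ and $b/a>1$ gives the closed formula for \emph{all} Farey points in the closed first quadrant at once, with no induction. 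Your ``spiral correction term'' is morally the same as the paper's modified prefix/suffix, and is finite for the reason you suspect.

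The genuine gap is in your proposed propagation ``to all slopes using the fact that any Farey point can be reached from $[1,\,0],[0,\,1],[-1,\,1]$ by a sequence of Farey mediant steps, under each of which both the slope data and the claimed shear vector transform linearly.'' There is no mechanism here. Shear coordinates with respect to the \emph{fixed} triangulation $T_0$ are not additive under the mediant operation (the mediant curve is not a disjoint union or smoothing of its two Farey parents; indeed the claimed formula $[1-b,\,a+1,\,b-a-1]$ is affine, not linear, in $[a,\,b]$, so even the formula is not mediant-additive). If instead you intend each ``step'' to be an element of $\GL_2(\integers)$ acting on the torus, then such a mapping class does \emph{not} preserve $T_0$ in general, so it transforms $\b(T_0,\cdot)$ by a composition of piecewise-linear mutation maps, not by a single linear map one ``checks once.'' The only linear symmetries available are the three that genuinely permute $\set{\gamma_1,\gamma_2,\gamma_3}$, and those only move you between the three $120^\circ$ sectors; within one sector you still have to do the full crossing count for arbitrary $[a,\,b]$, which is what the paper's word-counting argument supplies and what your sketch omits. (A smaller imprecision: for closed curves the raw crossing count with $\gamma_3$ is $a+b$, but the third shear coordinate is $b-a$; many crossings contribute $0$, so ``assigning the correct signs'' understates the bookkeeping, though the paper's answer $[-b,\,a,\,b-a]$ and your converse argument for part (3) are correct.)
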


\begin{proof}
We first consider the curves $\ccw(a,b)$ and $\cl(a,b)$ for positive or infinite slopes with standard form $b/a$.
As in the proof of Proposition~\ref{torus arcs}, each lifted curve determines a word in the letters $\r$ and $\t$, but we modify the words slightly to deal with spirals.
For $\ccw(a,b)$, we begin the word with $\t\r$, encoding the interaction of the curve with the square with corners $[0,\,0]$ and $[-1,\,1]$.
We then record $\r$ each time the lifted curve exits a square to the right and $\t$ each time the curve exits a square on top.
At the ending spiral point, we record an $\r$ as the curve exits a square on the right to begin its spiral, and then we end the word.
(More symmetrically, one might want to record $\r\t$ at the spiral, but the argument below shows that $\r$ is correct.)
For example, $\ccw(2,3)$ gives the word $\t\r\t\r\t\r$ as illustrated in the left picture of Figure~\ref{curve word}.
\begin{figure}[ht]
\begin{tabular}{cccc}
\includegraphics{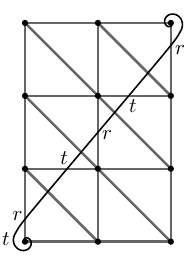}&&&\includegraphics{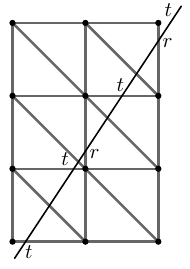}\\
$\t\r\t\r\t\r$&&&$\t\t\r\t\r\t$\\
$[-2,\,3,\,0]$&&&$[-3,\,2,\,1]$
\end{tabular}
\caption{Words and shear coordinates of curves $\ccw(3,2)$ and $\cl(3,2)$}
\label{curve word}
\end{figure}
If $b/a=\infty$, so that $a=0$ and $b=1$, then the word is $\t\r$.
For $\cl(a,b)$, we start at a point where some lift of the curve intersects the segment from $[0,\,0]$ to $[1,\,0]$, record a $\t$ for exiting the square with corners $[0,\,0]$ and $[1,\,-1]$ and record $\r$ and $\t$ until we reach a translate of the starting point, where we again record a $\t$.
Thus for example $\cl(2,3)$ gives the word $\t\t\r\t\r\t$ as illustrated in the right picture of Figure~\ref{curve word}.

To calculate shear coordinates from these words, we record a contribution for each consecutive pair of letters.
(Each interior letter in the word appear in two consecutive pairs.)
Figure~\ref{consec pair shear} illustrates these contributions.
(Cf. Figure~\ref{shear fig}.)
\begin{figure}[ht]
\begin{tabular}{cccc}
\includegraphics{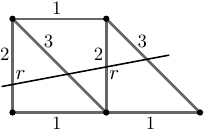}\,&\,\includegraphics{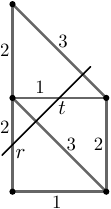}\,&\,\includegraphics{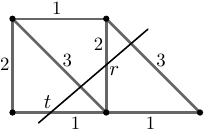}\,&\,\includegraphics{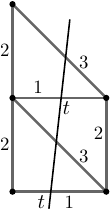}\\
$\r\r$&$\r\t$&$\t\r$&$\t\t$\\
$[0,\,1,\,-1]$&$[-1,\,0,\,0]$&$[0,\,1,\,0]$&$[-1,\,0,\,1]$
\end{tabular}
\caption{Shear coordinates from consecutive pairs in words}
\label{consec pair shear}
\end{figure}
In the figure, lifts of $\gamma_1$ are labeled ``$1$'' and so forth.
If the pair is $\r\r$, then we record the vector $[0,\,1,\,-1]$.
The $1$ in the $2\nd$ coordinate arises because after the second $\r$ is recorded, the curve next crosses a lift of $\gamma_3$.
If the pair is $\r\t$, then we record $[-1,\,0,\,0]$.
The $-1$ arises because after the $\t$ is recorded, the curve next crosses a lift of $\gamma_3$; we ended the word for $\ccw(a,b)$ with $\r$ instead of $\r\t$ to avoid incorrectly recording this $-1$.
If the pair is $\t\r$, then we record $[0,\,1,\,0]$.
If the pair is $\t\t$, then we record $[-1,\,0,\,1]$.
Adding up all of these contributions, we obtain the shear coordinates of the curve with respect to $T_0$.
Thus for example the curves represented in Figure~\ref{curve word} have shear coordinates as shown in the figure.

The word for $\ccw(a,b)$ is $\t\r$ followed by some sequence of $a-1$ instances of $\r$ and $b-1$ instances of $\t$, and then a final $\r$ for a total length of $a+b+1$ letters.
If $b/a\le1$, then the word has no consecutive pairs $\t\t$.
In this case, there are $b-1$ consecutive pairs $\r\t$ and $b$ consecutive pairs $\t\r$.  
The remaining $a-b+1$ consecutive pairs are $\r\r$.
Thus the shear coordinates of $\ccw(a,b)$ are 
\[(b-1)[-1,\,0,\,0]+b[0,\,1,\,0]+(a-b+1)[0,\,1,\,-1]=[1-b,\,a+1,\,b-a-1].\]
If $b/a>1$, then the word has no consecutive pairs $\r\r$.
(The condition $b/a>1$ implies in particular that the first letter after the initial $\t\r$ is $\t$ and that the last letter before the final $\r$ is also $\t$.)
Thus in this case, there are $a$ consecutive pairs $\r\t$ and $a+1$ consecutive pairs $\t\r$, with the remaining $b-a-1$ pairs being $\t\t$.
Thus the shear coordinates are 
\[a[-1,\,0,\,0]+(a+1)[0,\,1,\,0]+(b-a-1)[-1,\,0,\,1]=[1-b,\,a+1,\,b-a-1].\]

The calculation for $\cl(a,b)$ is very similar.
The word for $\cl(a,b)$ begins and ends with $\t$ and has a total of $a$ instances of $\r$ and $b+1$ instances of $\t$.
If $b/a\le1$, then the word has $\r\t$ $b$ times, $\t\r$ $b$ times, and $\r\r$ $a-b$ times, giving 
shear coordinates
\[b[-1,\,0,\,0]+b[0,\,1,\,0]+(a-b)[0,\,1,\,-1]=[-b,\,a,\,b-a].\]
If $b/a\ge 1$, then the word has $\r\t$ $a$ times, $\t\r$ $a$ times, and $\t\t$ $b-a$ times, giving the same formula for shear coordinates.
%\[a[-1,\,0,\,0]+a[0,\,1,\,0]+(b-a)[-1,\,0,\,1]=[-b,\,a,\,b-a].\]

We have thus dealt with shear coordinates of $\ccw(a,b)$ and $\cl(a,b)$ for $b/a$ positive or infinite.
The remaining possibilities for $b/a$ are now easily completed using the symmetries of the problem.
The linear map $[a,\,b]\mapsto[-b,\,a+b]$ restricts to a bijection from $\set{[a,\,b]:\,-a<b\le0}$ to $\set{[a,\,b]:\,a\ge0,\,b>0}$.
The map restricts further to a bijection on Farey points in these sets.
Furthermore, if $-a<b\le0$ and the shear coordinates of $\ccw(-b,a+b)$ are $[x,\,y,\,z]$, then by symmetry, the shear coordinates of $\ccw(a,\,b)$ are $[y,\,z,\,x]$.
The same is true replacing $\ccw(a,b)$ with $\cl(a,b)$.
Similarly, the map $[a,\,b]\mapsto[-a-b,\,a]$ restricts to a bijection from $\set{[a,\,b]:\,0<a\le-b}$ to $\set{[a,\,b]:\,a\ge0,\,b>0}$, and restricts further to a bijection on Farey points.
If $0<a\le-b$ and the shear coordinates of $\ccw(-a-b,a)$ are $[x,\,y,\,z]$, then the shear coordinates of $\ccw(a,\,b)$ are $[z,\,x,\,y]$, and the same is true for $\cl(a,b)$.

We have seen that the vectors described in item (1) of the theorem are exactly the shear coordinates of curves with counterclockwise spirals.
A reflection through the line $a=b$ induces a bijection from these curves to curves with clockwise spirals.
The effect of this reflection on shear coordinates is to switch $a$ and $b$, to swap the first two entries of the shear coordinates (since the reflection maps $\gamma_1$ to $\gamma_2$ and fixes $\gamma_3$) and to negate the shear coordinates (since the reflection reverses orientation).
Thus the vectors described in item (2) of the theorem are exactly the shear coordinates of curves with clockwise spirals.

We have also seen that the shear coordinates of closed curves are given by cyclic permutations of $[-b,\,a,\,b-a]$ for $b/a$ the standard form for a positive rational number or $\infty$.
To complete the proof, it remains only to show that all nonzero integer vectors $[x,\,y,\,z]$ with $x+y+z=0$ such that $x$, $y$, and $z$ have no common factors appear as cyclic permutations of such vectors $[-b,\,a,\,b-a]$.
Any such vector $[x,\,y,\,z]$ has one or more of its coordinates negative and one or more of its coordinates positive.
In particular, up to a cyclic permutation of coordinates, we can take $x$ negative and $y$ nonnegative.
Then setting $b=-x$ and $a=y$, we obtain $z=b-a$, so that $[x,\,y,\,z]$ is $[-b,\,a,\,b-a]$.
\end{proof}

\section{The Null Tangle Property}\label{null tangle sec}
In this section, we prove Theorem~\ref{torus null tangle}.
The following proposition goes part of the way towards the proof.

\begin{prop}\label{null tangle lines only}
Suppose $\Xi$ is a null tangle in $(\S,p)$.
Then the support of $\Xi$ contains only closed curves.
\end{prop}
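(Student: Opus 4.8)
The plan is to suppose, for contradiction, that the support of $\Xi$ contains a spiral curve, and then to produce a triangulation at which Proposition~\ref{one positive or one negative} applies. First I would delete from $\Xi$ every curve of weight zero; this changes neither the shear coordinates nor the support, so I may assume every curve of $\Xi$ has nonzero weight. The once-punctured torus admits an orientation-reversing symmetry that exchanges clockwise- and counterclockwise-spiralling curves and negates every shear coordinate, hence carries null tangles to null tangles; so I may further assume that $\Xi$ contains a curve of the form $\ccw(a,b)$, and it is enough to contradict this.

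I would record two facts, both consequences of Proposition~\ref{explicit coords} and the symmetries appearing in its proof. \emph{(i)} If $T$ is any triangulation and $\gamma\in T$ is the arc of slope $q$, then $\b(T,\ccw(q))=\e_\gamma$ and $\b(T,\cw(q))=-\e_\gamma$ (the standard basis vector indexed by $\gamma$ and its negative), while $b_\gamma(T,\cl(q))=0$ because the closed curve of slope $q$ is disjoint from $\gamma$. For $T=T_0$ this is exactly the Section~\ref{expl shear} computation: the curves $\ccw(1,0)$, $\ccw(0,1)$, $\ccw(1,-1)$ have shear coordinates $\e_1,\e_2,\e_3$, and their clockwise counterparts have $-\e_1,-\e_2,-\e_3$; the general case follows because $\mathrm{SL}_2(\ZZ)$ — the mapping class group of $(\S,p)$ — acts transitively on triangulations and compatibly with slopes, spiral directions, and shear coordinates. \emph{(ii)} Because the rows of $B$ sum to zero, matrix mutation by $B$ preserves the sum of the entries of a coefficient row; hence, with respect to every triangulation, the shear coordinates of a $\ccw$-curve sum to $1$, those of a $\cw$-curve sum to $-1$, and those of a closed curve sum to $0$. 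I would use (ii) mainly as a consistency check and as the clean explanation of why (i) has the stated form.

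The decisive step is the choice of triangulation. The curves of $\Xi$ use only finitely many slopes; I regard these as ideal vertices of the Farey tessellation, i.e.\ as finitely many points on a circle. Choose a counterclockwise-spiralling curve $\ccw(q_0)\in\Xi$. If $q_0$ is the only slope occurring in $\Xi$, any triangulation $T$ containing the arc $\gamma$ of slope $q_0$ already has the property I want; otherwise, let $q_1$ be the slope of $\Xi$ immediately following $q_0$ on the circle, so that the open arc from $q_0$ to $q_1$ contains no slope of $\Xi$. Since the Farey neighbours of $q_0$ accumulate at $q_0$ along that arc, two \emph{consecutive} Farey neighbours of $q_0$ lie inside it; together with $q_0$ they form a Farey triangle, hence a triangulation $T$ containing the slope-$q_0$ arc $\gamma$ and having the property that every other slope occurring in $\Xi$ lies on the far side of $\gamma$ relative to $T$. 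A sign analysis of the two local configurations of Figure~\ref{shear fig}, carried out in the universal cover after using an element of $\mathrm{SL}_2(\ZZ)$ to normalize $q_0$ and its two chosen neighbours, then shows that every curve $\nu\in\Xi$ of slope $\ne q_0$ makes the same-signed contribution at each of its crossings with $\gamma$, so that $b_\gamma(T,\nu)\le 0$ — after replacing $T$ by its reflection and $q_0$ by the corresponding slope if the common sign turns out to be positive. Together with fact (i), namely $b_\gamma(T,\ccw(q_0))=1>0$, $b_\gamma(T,\cw(q_0))=-1$, and $b_\gamma(T,\cl(q_0))=0$, this exhibits $\ccw(q_0)$ as the unique curve of $\Xi$ with strictly positive $\gamma$-coordinate while all other curves of $\Xi$ have nonpositive $\gamma$-coordinate. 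Proposition~\ref{one positive or one negative} then forces $w_{\ccw(q_0)}=0$, contradicting $\ccw(q_0)\in\Xi$.

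I expect the sign analysis to be the genuine obstacle. One has to attach a precise meaning to ``$\nu$ lies on the far side of $\gamma$ relative to $T$'' for curves on the once-punctured torus — most cleanly by lifting to $\reals^2$ and comparing the slope of $\nu$ with the slopes of the three arcs of $T$ — and then to check, from the quadrilateral pictures that define shear coordinates, that such a $\nu$ never contributes $+1$ to $b_\gamma(T,\cdot)$. The rest is bookkeeping with Proposition~\ref{explicit coords} and the $\mathrm{SL}_2(\ZZ)$-action.
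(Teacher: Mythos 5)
Your overall strategy is the paper's: isolate a spiral curve $\ccw(q_0)$, build a triangulation $T$ containing the arc $\gamma$ of slope $q_0$ whose other two arcs are consecutive Farey neighbours of $q_0$ squeezed into a gap containing no other slope of $\Xi$, observe that $\ccw(q_0)$ is then the unique curve of $\Xi$ with strictly positive $\gamma$-coordinate, and invoke Proposition~\ref{one positive or one negative}; clockwise spirals are handled by reflecting the whole tangle. Your fact (i) is correct and is exactly what the paper reads off its figure, and fact (ii) is true but not needed.

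There is, however, one step that fails as written. The set of slopes $\sigma$ for which a curve of slope $\sigma$ can have $b_\gamma(T,\cdot)>0$ is an interval emanating from $q_0$ in one \emph{fixed} rotational direction on the circle of slopes (determined by the orientation of the torus), reaching just past the nearer of the other two vertices of $T$; this is the content of Lemma~\ref{slope ineq lemma}. Consequently, if you happen to place the two Farey neighbours in the gap on the \emph{wrong} side of $q_0$, the positive window for $\gamma$ wraps the long way around the circle and contains \emph{every} other slope of $\Xi$, and Proposition~\ref{one positive or one negative} does not apply. Your proposed repair --- ``replacing $T$ by its reflection'' --- does not work: reflecting the triangulation while keeping the tangle fixed is not a symmetry of the problem (the reflected triangulation no longer contains the arc of slope $q_0$, and the shear coordinates of the unreflected curves of $\Xi$ with respect to the reflected triangulation are not the negatives of their coordinates with respect to $T$). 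The correct repair is simply to choose the gap on the correct side of $q_0$ from the outset, which is always possible since $\Xi$ involves only finitely many slopes and hence has a gap on \emph{both} sides of $q_0$; this is what the paper does by always taking the two Farey neighbours with slopes just below $q_0$ (via its Lemma~\ref{Farey neighbor}). With that adjustment, and with the deferred sign analysis actually carried out (it is exactly the computation behind Lemma~\ref{slope ineq lemma}, or the paper's Figure~\ref{nulltangletorusfig}), your argument becomes the paper's proof.
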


The proof of Proposition~\ref{null tangle lines only} uses the following lemma, which is far from an exhaustive list of Farey neighbors.

\begin{lemma}\label{Farey neighbor}
Let $[a,\,b]$ be a standard Farey point.
\begin{enumerate}
\item \label{Farey neighbor a=0}
If $a=0$, then the Farey neighbors of $[a,\,b]$ include $[1,\,n]$ for $n\ge 0$. 
\item If $a=1$, then the Farey neighbors of $[a,\,b]$ include $[n,\,bn-1]$ for $n\ge 1$.
\item If $a>1$, then there exists an integer $c_0$ with $1\le c_0<a$ such that the Farey neighbors of $[a,\,b]$ include $\bigl[(c_0+an),\,\frac{b(c_0+an)-1}{a}\bigr]$ for $n\ge 0$.
\end{enumerate}
\end{lemma}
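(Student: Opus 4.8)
The plan is to verify each of the three cases directly by checking the defining conditions for Farey neighbors: that the two integer vectors have coordinates of weakly the same sign and that the relevant $2\times2$ determinant equals $\pm1$.

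For case \eqref{Farey neighbor a=0}, with $[a,\,b]=[0,\,1]$ (the only standard Farey point with $a=0$), I would pair it with $[1,\,n]$ for $n\ge0$. The first coordinates are $0$ and $1$, which are weakly the same sign; the second coordinates are $1$ and $n$, which are weakly the same sign since $n\ge0$; and the determinant is $0\cdot n-1\cdot1=-1$. Also $[1,\,n]$ is itself a Farey point since $\gcd(1,n)=1$. For case (2), with $a=1$ so the point is $[1,\,b]$ with $b$ arbitrary (note $\gcd(1,b)=1$ automatically), I would pair it with $[n,\,bn-1]$ for $n\ge1$: the first coordinates $1$ and $n$ are positive; the determinant is $1\cdot(bn-1)-b\cdot n=-1$; and $\gcd(n,bn-1)=\gcd(n,-1)=1$ so it is a Farey point. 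The only subtlety is checking that the second coordinates $b$ and $bn-1$ have weakly the same sign, which I would argue by cases on the sign of $b$ (if $b\ge1$ then $bn-1\ge0$ for $n\ge1$; if $b\le0$ then $bn-1<0$; if $b=0$ we are in case~(2) only vacuously since then $[1,0]$, but $bn-1=-1$ and $b=0$ are weakly the same sign).

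Case (3) is the main point. Here $a>1$ and $\gcd(a,b)=1$, so $b$ is invertible modulo $a$; let $c_0$ be the unique integer with $1\le c_0<a$ and $bc_0\equiv1\pmod a$. Then for each $n\ge0$, the integer $b(c_0+an)-1$ is divisible by $a$, so the proposed point $\bigl[(c_0+an),\,\frac{b(c_0+an)-1}{a}\bigr]$ has integer coordinates. I would compute the determinant
\[
a\cdot\frac{b(c_0+an)-1}{a}-b(c_0+an)=b(c_0+an)-1-b(c_0+an)=-1,
\]
which simultaneously shows the determinant is $\pm1$ and, together with $\gcd$ considerations, that the new point is a genuine Farey point (any common divisor of its two coordinates divides the determinant $-1$). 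The remaining obligation is the sign condition: $c_0+an>0$ clearly, and I must check it has weakly the same first-coordinate sign as $a>0$ — immediate — and that the second coordinate $\frac{b(c_0+an)-1}{a}$ has weakly the same sign as $b$.

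The step I expect to require the most care is precisely this last sign verification in case (3) (and its analogue in case (2)). When $b>0$ the second coordinate is nonnegative for all $n\ge0$ since $b(c_0+an)\ge bc_0\ge b\ge1$ forces the numerator nonnegative; when $b=0$ we have $a=1$ by $\gcd(a,b)=1$, contradicting $a>1$, so this subcase is vacuous; when $b<0$ the numerator $b(c_0+an)-1$ is strictly negative, so the second coordinate is negative, matching $b<0$. Assembling these observations gives the lemma; no genuinely hard obstacle arises, only the bookkeeping of signs and the observation that $c_0$ exists because $b$ is a unit mod $a$.
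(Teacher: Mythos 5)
Your proposal is correct and follows essentially the same route as the paper: reduce the Farey-point condition to the determinant condition $ad-bc=\pm1$ (since any common divisor of the new coordinates divides the determinant), dispose of $a=0$ and $a=1$ directly, and for $a>1$ obtain $c_0$ from the invertibility of $b$ modulo $a$. The only difference is that you spell out the sign verifications that the paper leaves implicit, which is a harmless (and welcome) addition.
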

\begin{proof}
The condition $ad-bc=\pm1$ implies that $[c,\,d]$ is a Farey point because any common divisor of $c$ and $d$ is a divisor of $ad-bc$.
Thus, the Farey neighbors of $[a,\,b]$ are the integer points $[c,\,d]$ such that $a$ and $c$ have weakly the same sign, $b$ and $d$ have weakly the same sign, and $ad-bc=\pm1$.

If $a=0$, then $b=1$, and the statement is easy.
If $a=1$, then the statement is also easy.
Now suppose $a>1$.
For each integer $c\ge 1$, the equation $ad-bc=-1$ has an integer solution $d=\frac{bc-1}{a}$ if and only if $bc\equiv1\mod a$.
Since $\gcd(a,b)=1$, every integer $bc$ for $1\le c<a$ is in a distinct class mod $a$ and none of these integers are multiples of $a$.
Thus there exists an integer $c_0$ with $1\le c_0<a$ and $bc_0\equiv1\mod a$.
For each integer $n\ge0$, the quantity $d=\frac{b(c_0+an)-1}{a}$ is also an integer.
\end{proof}

\begin{proof}[Proof of Proposition~\ref{null tangle lines only}]
Suppose $\Xi$ is a null tangle in $(\S,p)$.
Proposition~\ref{torus allowable} says that each curve in $\Xi$ is $\cw(a,b)$, $\ccw(a,b)$, or $\cl(a,b)$ for some slope with standard form $b/a$.
For each curve $\lambda$, we write $q_\lambda$ for this slope $b/a$.

First, suppose $\Xi$ contains some curve $\lambda$ of the form $\ccw(e,f)$ for some slope $q_\lambda$ with standard form $f/e$.
Since there are finitely many curves in $\Xi$, there is some slope $\bar q<q_\lambda$ such that every curve $\nu\in\Xi$ has either $q_\nu\ge q_\lambda$ or $q_\nu\le\bar q$.
Lemma~\ref{Farey neighbor} lets us find a Farey neighbor $[a,\,b]$ of $[e,\,f]$ with $\bar q<b/a<f/e$.
This is done by taking $n$ large enough and setting $[a,\,b]$ equal to $[1,\,n]$ if $e=0$, or $[n,\,fn-1]$ if $e=1$, or $\bigl[(a_0+en),\,\frac{f(a_0+en)-1}{e}\bigr]$ if $e>1$.
Let $[c,\,d]$ be the Farey neighbor of $[e,\,f]$ obtained by replacing $n$ by $n+1$ in the expression for $[a,\,b]$.
Then $\bar q<b/a<d/c<f/e$, and furthermore, $[a,\,b]$ and $[c,\,d]$ are Farey neighbors.
Let $\alpha$ be the arc associated to $[a,\,b]$, let $\beta$ be the arc associated to $[c,\,d]$ and let $\gamma$ be the arc associated to $[e,\,f]$.
Then $T=\set{\alpha,\beta,\gamma}$ is a triangulation of $(\S,p)$.

To calculate shear coordinates, we look at the intersections of lifts of the curves with lifts of the arcs $\alpha,\beta,\gamma$.
When we do so, we see that $b_\gamma(T,\lambda)=1$, as illustrated in Figure~\ref{nulltangletorusfig}, with two preimages of $\alpha$ drawn in red, two preimages of $\beta$ drawn in blue, and a preimage of $\gamma$ drawn in purple.
\begin{figure}[ht]
\includegraphics{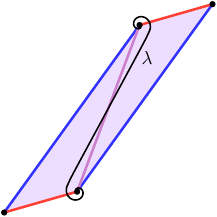}
\caption{Illustration for the proof of Proposition~\ref{null tangle lines only}}
\label{nulltangletorusfig}
\end{figure}
Furthermore, any other curve $\nu$ with $b_\gamma(T,\nu)>0$ has $b/a<q_\nu<f/e$.
But no such curve $\nu$ exists in $\Xi$ because $\bar q<b/a<f/e=q_\lambda$.
Thus any $\nu\in\Xi$ with $\nu\neq\lambda$ has $b_\gamma(T,\nu)$ nonpositive.
Proposition~\ref{one positive or one negative} says that $\lambda$ has weight $0$ in~$\Xi$.

Next suppose $\Xi$ contains some curve $\lambda$ of the form $\cw(a,b)$.
Applying the symmetry of the once-punctured torus induced by reflecting $\reals^2$ in the line $a=b$, we obtain a null tangle $\Xi'$.
Let $\lambda'$ be the image of $\lambda$ under the symmetry.
By the preceding argument, the weight of $\lambda'$ in $\Xi'$ is $0$, so the weight of $\lambda$ in $\Xi$ is~$0$.
\end{proof}

The following proposition is the specialization of \cite[Proposition~7.9]{unisurface} %This is Proposition {null tangle is B coher rel} of {unisurface}
to the once-punctured torus.

\begin{prop}\label{null tangle is B coher rel}
A tangle $\Xi$ in $(\S,\M)$ is null if and only if $\sum_{\lambda\in\Xi}w_\lambda\b(T,\lambda)$ is a $B(T)$-coherent linear relation for any triangulation $T$.
\end{prop}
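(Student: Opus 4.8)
The plan is to express both sides of the claimed equivalence in terms of the mutation maps $\eta^{B(T)}_\k$ and match them against the two defining equations \eqref{linear eta} and \eqref{piecewise eta} of $B(T)$-coherence. The one external ingredient is the classical fact that shear coordinates transform under flips exactly by mutation: if $T'$ is obtained from $T$ by flipping $\gamma_k$, then $\b(T',\lambda)=\eta^{B(T)}_k(\b(T,\lambda))$ for every allowable curve $\lambda$ (see \cite{cats2}, or the quasi-lamination version in \cite{unisurface}). Since the once-punctured torus has no self-folded triangles, every arc of every triangulation is flippable; iterating, $\b(\mu_\k T,\lambda)=\eta^{B(T)}_\k(\b(T,\lambda))$ for the triangulation $\mu_\k T$ reached by the flip sequence $\k$, and since the flip graph of the surface is connected, the triangulations $\mu_\k T$ are exactly all triangulations of $(\S,p)$. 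Summing over the tangle, $\b(\mu_\k T,\Xi)=\sum_{\lambda\in\Xi}w_\lambda\,\eta^{B(T)}_\k(\b(T,\lambda))$ for every~$\k$.

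Fix a triangulation $T$. The displayed identity shows immediately that ``$\Xi$ is null'' is equivalent to condition \eqref{linear eta} holding (for every $\k$) for the formal relation $\sum_{\lambda}w_\lambda\b(T,\lambda)$; and since a $B(T)$-coherent relation in particular satisfies \eqref{linear eta}, this also yields the reverse implication of the proposition (using connectivity to pass from $\b(\mu_\k T,\Xi)=\mathbf{0}$ for all $\k$ to $\b(T',\Xi)=\mathbf{0}$ for all $T'$). So the substantive point is that, for a null tangle, condition \eqref{piecewise eta} holds automatically. Fix $\k$, put $B'=B(\mu_\k T)$ (which is $\pm B$) and $\v_\lambda=\b(\mu_\k T,\lambda)$, with entries $v_{\lambda,m}$. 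Nullity gives $\sum_\lambda w_\lambda\v_\lambda=\mathbf{0}$, and applying nullity to $\mu_m\mu_\k T$ for an arbitrary index $m$, together with $\b(\mu_m\mu_\k T,\lambda)=\eta^{B'}_m(\v_\lambda)$, gives $\sum_\lambda w_\lambda\,\eta^{B'}_m(\v_\lambda)=\mathbf{0}$. For $j\neq m$, formula \eqref{mutation map def} reads $\eta^{B'}_m(\v)_j-v_j=[v_m]_+[b'_{mj}]_+-[-v_m]_+[-b'_{mj}]_+$, so subtracting the two identities in coordinate $j$ yields
\[[b'_{mj}]_+\sum_\lambda w_\lambda[v_{\lambda,m}]_+-[-b'_{mj}]_+\sum_\lambda w_\lambda[-v_{\lambda,m}]_+=0.\]
Every row of $B'$ has a positive and a negative off-diagonal entry, so choosing $j$ with $b'_{mj}<0$ forces $\sum_\lambda w_\lambda[-v_{\lambda,m}]_+=0$; since $\mathbf{min}(\v_\lambda,\mathbf{0})_m=-[-v_{\lambda,m}]_+$ and $m$ is arbitrary, $\sum_\lambda w_\lambda\mathbf{min}(\v_\lambda,\mathbf{0})=\mathbf{0}$, which is exactly \eqref{piecewise eta} for this $\k$. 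As ``$\Xi$ is null'' makes no reference to $T$, the resulting equivalence holds for any triangulation $T$.

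The routine parts are the bookkeeping with \eqref{mutation map def} and the trivial sign check on the three rows of $B$. The hard part --- in fact the only real input --- is the flip-to-mutation compatibility of shear coordinates together with connectivity of the flip graph; in the once-punctured torus both are standard, and they are precisely what is packaged in \cite[Proposition~7.9]{unisurface}, so the honest proof simply specializes that result. A self-contained argument would instead have to re-derive the piecewise-linear flip rule for shear coordinates, check that it coincides with \eqref{mutation map def}, and track the isotopy normalization (no arc crossed twice consecutively in opposite directions) through each flip.
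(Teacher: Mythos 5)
Your argument is correct and is essentially the proof that the paper outsources to \cite[Proposition~7.9]{unisurface}: flip--mutation compatibility of shear coordinates plus connectivity of the flip graph gives the equivalence of nullity with condition \eqref{linear eta} for all $\k$, and your subtraction trick (using that every row of $\pm B$ has both a positive and a negative off-diagonal entry) is precisely the auxiliary fact, \cite[Proposition~2.3]{unisurface}, by which \eqref{piecewise eta} then comes for free. The paper gives no proof beyond the citation, and your reconstruction matches the cited argument, so nothing further is needed.
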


\begin{proof}[Proof of Theorem~\ref{torus null tangle}]
Suppose $\Xi$ is a null tangle.
By Proposition~\ref{null tangle lines only}, the support of $\Xi$ contains only curves of the form $\cl(a,b)$.
To complete the proof, we resort to working with explicit shear coordinates and Proposition~\ref{null tangle is B coher rel}.
Let $\lambda=\cl(a,b)$ be a curve in~$\Xi$ with weight $w_\lambda$.
Let $\gamma$ be the arc that is the projection of the segment connecting $[0,\,0]$ to $[a,\,b]$.
Complete $\gamma$ to a triangulation $T'$ by adding two additional arcs.
Index the arcs so that the shear coordinates of $\lambda$ are $[1,\,-1,\,0]$.
If $B(T')=-B$, then we alter $T'$ by flipping the arc $\gamma$.
%SinceSLC:  Changed $\gamma$ to $\lambda$ in following sentence.  Thanks, Emily! 
This preserves the shear coordinates of $\lambda$ and makes $B(T')$ equal to $B$.
Taking the shear coordinates, with respect to $T'$ of all of the curves in $\Xi$ and applying Proposition~\ref{null tangle is B coher rel}, we obtain a $B$-coherent linear relation $c\v+\sum_{i\in S}c_i\v_i$ on nonzero integer vectors in the plane $x+y+z=0$, with $\v=[1,\,-1,\,0]$ and $c=w_\lambda$.
We will show that $c=0$.

To show that $c=0$, we consider $\eta_{21}^B=\eta^{-B}_2\circ\eta^B_1$ and its inverse $\eta_{12}^B$, restricting our attention to the plane $x+y+z=0$.
Using \eqref{mutation map def}, we calculate $\eta_{21}^B([x,\,y,\,z])$ to be
\[
\eta_{21}^B([x,\,y,\,z])=
\begin{cases}
[-x,\,-y,\,2x+2y+z]&\text{if $x\le0$ and $y\le0$},\\
[-x+2y,\,-y,\,2x+z]&\text{if $x\le0$ and $y\ge0$},\\
[-x,\,-2x-y,\,4x+2y+z]&\text{if $x\ge0$ and $2x+y\le0$},\\
[3x+2y,\,-2x-y,\,z]&\text{if $x\ge0$ and $2x+y\ge0$}.\\
\end{cases}
\]
Define $D_0$ to be the closed cone spanned by $[-1,\,0,\,1]$ and $[0,\,1,\,-1]$.
For each integer $j>0$, define $D_j$ to be the closed cone spanned by $[j-1,\,-j+2,\,-1]$ and $[j,\,-j+1,\,-1]$.
For each integer $j<0$, define $D_j$ to be the closed cone spanned by $[-j-2,\,j+1,\,1]$ and $[-j-1,\,j,\,1]$.
Some of these cones are illustrated in Figure~\ref{coords and eta12}.
\begin{figure}[h]
\begin{tabular}{cccc}
\scalebox{1}{\includegraphics{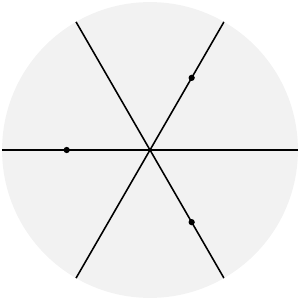}}
\begin{picture}(0,0)(72,-72)
\put(-43,-52){\rotatebox{60}{$x=0$}}
%\put(-33,-58){\rotatebox{60}{$y+z=0$}}
\put(31,5){$y=0$}
%\put(13.25,-8){$x+z=0$}
\put(-33,57){\rotatebox{-60}{$z=0$}}
%\put(-44,51){\rotatebox{-60}{$x+y=0$}}
\put(19,29){\small$[0,\,1,\,-1]$}
\put(19,-33){\small$[1,\,-1,\,0]$}
\put(-63,5){\small$[-1,\,0,\,1]$}
\end{picture}
&&&
\scalebox{1}{\includegraphics{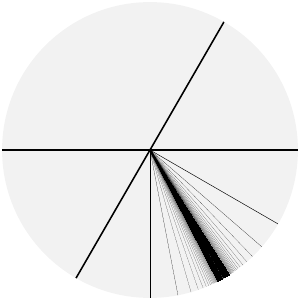}}
\begin{picture}(0,0)(72,-72)
\put(-30,30){$D_0$}
\put(30,20){$D_1$}
\put(38,-15){$D_2$}
\put(-50,-26){$D_{-1}$}
\put(-27,-55){$D_{-2}$}
\end{picture}
\end{tabular}
\caption{The plane $x+y+z=0$ and the cones $D_j$}
\label{coords and eta12}
\end{figure}
The left picture establishes the coordinate system for the right picture by showing the intersections of the planes $x=0$, $y=0$ and $z=0$ with the plane $x+y+z=0$ and by labeling several points in the plane.
The map $\eta_{21}^B$ fixes $\v$ and sends $D_j$ to $D_{j+2}$ for all $j$.
The inverse map $\eta_{12}^B$ fixes $\v$ and sends $D_j$ to $D_{j-2}$ for all $j$.

Now, we may as well assume that all of the vectors $\v_i$ are contained in $D_1\cup\cdots\cup D_{m-2}$ for some $m\ge 3$.
Otherwise, we can replace $c\v+\sum_{i\in S}c_i\v_i$ by $c\v+\sum_{i\in S}c_i(\eta_{21}^B)^\ell\v_i$ for large enough integer $\ell$.
Partition the set $S$ into $m$ (possibly empty) blocks $U_1,\ldots,U_m$ with the following property:
If $i\in U_j$, then $\v_i\in D_j$.
There may not be a unique way to do this, because $\v_i$ may be in $U_j\cap U_{j+1}$.
For each $j$ from $1$ to $m$, let $\w_j=\sum_{i\in U_j}c_i\v_i$.
Since this sum may have more than one term and since the $c_i$ may be negative, the vector $\w_j$ may not be contained in $D_j$.
For that reason, $\eta_{12}^B(\w_j)$ may not equal $\sum_{i\in U_j}c_i\eta_{12}^B(\v_i)$.
Each $D_j$ is contained in a domain of linearity of $\eta_{12}^B$, and we define $\theta_j$ to be the linear map that agrees with $\eta_{12}^B$ on $D_j$.
Furthermore, each $D_j$ is contained in a domain of linearity of $(\eta_{12}^B)^k$ for any $k$.
We write $\theta^{(k)}_j$ for the linear map that agrees with $(\eta_{12}^B)^k$ on $D_j$.
Thus $\theta^{(k)}_j$ is $\theta_{j+2k-2}\circ\theta_{j+2k-4}\circ\cdots\circ\theta_j$ for $k\ge 1$ or $\theta_{j+2k+2}^{-1}\circ\theta_{j+2k+4}^{-1}\circ\cdots\circ\theta_j^{-1}$ for $k\le1$.
A key to the following argument is that, for all $j$, the maps $\theta_j$ and $\theta_{j+1}$ coincide on the line containing $D_j\cap D_{j+1}$, and both maps send that line to the line containing $D_{j+2}\cap D_{j+3}$.
%Since each $\theta_j$ maps $D_j$ to $D_{j-2}$, the maps $\theta_j^{-1}$ and $\theta_{j+1}^{-1}$ coincide on the line containing $D_{j-2}$ and $D_{j-1}$.

Now we begin to apply the fact that $c\v+\sum_{i\in S}c_i\v_i$ is a $B$-coherent linear relation.
Specifically, we inductively construct a list $\u_0,\u_1,\ldots,\u_m$ of vectors with two properties:
First, each $\u_\ell$ is in the line containing the ray $D_\ell\cap D_{\ell+1}$, and second, $\w_j=\u_j-\u_{j-1}$ for $j$ from $1$ to $m$.
We begin by setting $\u_0=\mathbf{0}$.
As a base for the induction, observe that all of the vectors $\v_i$ with positive second coordinate have $i\in U_1$.
Thus, subtracting \eqref{piecewise eta} from \eqref{linear eta} with $\k=\emptyset$ and restricting to the second coordinate, we see that $\w_1$ equals some vector $\u_1$ contained in the line $y=0$, which is the line containing $D_1\cap D_2$.
Now suppose $j>1$.
By induction, we have already constructed a vector $\u_{j-2}$ contained in the line containing $D_{j-2}\cap D_{j-1}$ and a vector $\u_{j-1}$ contained in the line containing $D_{j-1}\cap D_j$, and we have $\w_{j-1}=\u_{j-1}-\u_{j-2}$.

Consider first the case where $j$ is even.
All of the vectors $(\eta_{12}^B)^{j/2}(\v_i)$ with negative first coordinate are in the interior of $D_{-1}\cup D_0$, so that each corresponding $\v_i$ is in the interior of $D_{j-1}\cup D_j$, or in other words, $i\in U_{j-1}\cup U_j$.
Restricting \eqref{piecewise eta} to its first coordinate, with $\k$ the sequence $1212\cdots$ of length $j$, we see that $\theta_{j-1}^{(j/2)}(\w_{j-1})+\theta_j^{(j/2)}(\w_j)$ is a vector contained in the line $x=0$.
Now $\theta_{j-1}^{(j/2)}(\w_{j-1})=\theta_{j-1}^{(j/2)}(\u_{j-1})-\theta_{j-1}^{(j/2)}(\u_{j-2})$.
Since $\u_{j-2}$ is in the line containing $D_{j-2}\cap D_{j-1}$ and $\u_{j-1}$ is in the line containing $D_{j-1}\cap D_j$, we see that $\theta_{j-1}^{(j/2)}(\u_{j-2})$ is in the line containing $D_{-2}\cap D_{-1}$ (i.e.\ the line $x=0$) and that $\theta_{j-1}^{(j/2)}(\u_{j-1})$ is the line containing $D_{-1}\cap D_0$ (i.e.\ the line $y=0$).
Now the fact that $\theta_{j-1}^{(j/2)}(\w_{j-1})+\theta_j^{(j/2)}(\w_j)$ is in the line $x=0$ means that 
$\theta_j^{(j/2)}(\w_j)=\s-\theta_{j-1}^{(j/2)}(\u_{j-1})$ for some vector $\s$ in the line $x=0$.
Since $\u_{j-1}$ is in the line containing $D_{j-1}\cap D_j$, the maps $\theta_{j-1}^{(j/2)}$ and $\theta_j^{(j/2)}$ agree on the vector $\u_{j-1}$.
Thus we write $\theta_j^{(j/2)}(\w_j)=\s-\theta_j^{(j/2)}(\u_{j-1})$, and we conclude that $\w_j=\u_j-\u_{j-1}$,
where $\u_j=(\theta_j^{(j/2-1)})^{-1}\s$ is contained in the line containing $D_j\cap D_{j+1}$.

Next, consider the case where $j$ is odd.
All of the vectors $(\eta_{12}^B)^{(j-1)/2}(\v_i)$ with positive second coordinate are in the interior of $D_0\cup D_1$, so that each corresponding $i$ is in $U_{j-1}\cup U_j$.
Thus, subtracting \eqref{piecewise eta} from \eqref{linear eta} with $\k$ the sequence $1212\cdots$ of length $j-1$ and restricting to the second coordinate, we see that $\theta_{j-1}^{((j-1)/2)}(\w_{j-1})+\theta_j^{((j-1)/2)}(\w_j)$ is a vector contained in the line $y=0$, which is the line containing $D_1\cap D_2$.
But $\theta_j^{((j-1)/2)}(\u_{j-2})$ is also in the line $y=0$ containing $D_{-1}\cap D_0$, so we conclude that $\theta_j^{((j-1)/2)}(\w_j)=\s-\theta_{j-1}^{((j-1)/2)}(\u_{j-1})$ for some vector $\s$ in the line $y=0$.
Since $\u_{j-1}$ is in the line containing $D_{j-1}\cap D_j$, the maps $\theta_{j-1}^{((j-1)/2)}$ and $\theta_j^{((j-1)/2)}$ agree on the vector $\u_{j-1}$.
Thus we write $\theta_j^{((j-1)/2)}(\w_j)=\s-\theta_j^{((j-1)/2)}(\u_{j-1})$, and we conclude that $\w_j=\u_j-\u_{j-1}$ where $\u_j=(\theta_j^{((j-1)/2)})^{-1}\s$ is contained in the line containing $D_j\cap D_{j+1}$
This completes our inductive construction of the $\u_j$.

By our choice of $m$, the set $U_m$ is empty, so $\w_m=\mathbf{0}$.
Since $\w_m$ is $\u_m-\u_{m-1}$ with $\u_m$ and $\u_{m-1}$ contained in distinct lines through the origin, we conclude that $\u_m=\u_{m-1}=\mathbf{0}$.
We rewrite the expression $c\v+\sum_{i\in S}c_i\v_i$ as $c\v+\sum_{j=1}^m\w_j$, and then further as a telescoping sum $c\v+\sum_{j=1}^m(\u_j-\u_{j-1})=c\v+\u_m-\u_0$, which equals $c\v$.
But this expression equals $\mathbf{0}$ and $\v$ is a nonzero vector, so $c=0$.
Since $\lambda$ is an arbitrary curve in $\Xi$, the proof is complete.
\end{proof}

Now Corollary~\ref{torus pos basis} combines with Proposition~\ref{explicit coords} to show that the vectors described in \eqref{markov gs}, \eqref{markov opp gs}, and \eqref{markov equatorial} of Theorem~\ref{markov main} are a $\integers$- or $\rationals$-basis for $B$.
By Theorem~\ref{basis univ}, we have proved the first assertion of Theorem~\ref{markov main}.
The proof of Theorem~\ref{markov main} is completed in Section~\ref{real sec}.

\section{The mutation fan}\label{fan sec}
In this section, we construct the mutation fan $\F_B$, which was pictured in Figure~\ref{torus FB fig}.
We define six maps from $\reals^2$ to $\reals^3$.
\[\phi_1:[a,\,b]\mapsto[1-b,\,a+1,\,b-a-1]\qquad\phi_4:[a,\,b]\mapsto[-1-b,\,a-1,\,b-a+1]\]
\[\phi_2:[a,\,b]\mapsto[a+1,\,b-a-1,\,1-b]\qquad\phi_5:[a,\,b]\mapsto[a-1,\,b-a+1,\,-1-b]\]
\[\phi_3:[a,\,b]\mapsto[b-a-1,\,1-b,\,a+1]\qquad\phi_6:[a,\,b]\mapsto[b-a+1,\,-1-b,\,a-1]\]
The first three maps are bijections from $\reals^2$ to the plane given by $x+y+z=1$, related to each other by a cyclic permutation of coordinates.
The last three maps are bijections to the plane $x+y+z=-1$, related in the same way.
We extend each map $\phi_i$ to a map $\Phi_i$ from closed polyhedra in $\reals^2$ to closed polyhedral cones in $\reals^3$ sending a polyhedron $P$ to the closure of the nonnegative linear span of $\phi_i(P)$.
In particular, these maps send Farey rays and Farey triangles to $2$- and $3$-dimensional simplicial cones.
A Farey ray with vertex $[a,\,b]$ is the limit, as $c\to\infty$, of the line segment from $[a,\, b]$ to $c[a,\, b]$.
Thus $\Phi_i$ sends the Farey ray to the limit, as $c\to\infty$, of the $2$-dimensional cone spanned by $\phi_i([a,\,b])$ and $\phi_i([ca,\,cb])$, or equivalently the cone spanned by $\phi_i([a,\,b])$ and $\frac1c\phi_i([ca,\,cb])$.
This limit is the cone spanned by $\phi_i([a,\,b])$ and $\lim_{c\to\infty}\frac{1}{c}\phi_i([ca,\,cb])$.
For example, $\Phi_1$ sends this ray to the cone spanned by $[1-b,\,a+1,\,b-a-1]$ and $[-b,\,a,\,b-a]$.

\begin{theorem}\label{torus FB}
Let $B=\begin{bsmallmatrix*}[r]0&2&-2\\-2&0&2\\2&-2&0\\\end{bsmallmatrix*}$.
Then $\F_B$ consists of the following cones and their faces:
\begin{enumerate}
\item \label{torus FB pmO}
The nonnegative and nonpositive orthants $\pm O$.
\item \label{torus FB triangle}
For each $i=1,2,3,4,5,6$, the image under $\Phi_i$ of all Farey triangles contained in the region $\set{[a,\,b]\in\reals^2:a\ge-1,\,b\ge1}$.
\item \label{torus FB ray}
For each $i=1,2,3,4,5,6$, the image under $\Phi_i$ of all Farey rays whose vertex is $[a,\,b]$ with $a\ge0$ and $b\ge 1$.
\item
All rays contained in the plane $x+y+z=0$.
\end{enumerate}
All of the cones described above are maximal except for the rational rays in the plane $x+y+z=0$, each of which is a proper face of two cones described in \eqref{torus FB ray}.
\end{theorem}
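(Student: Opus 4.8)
The plan is to determine $\F_B$ by combining the description of its rational part with a direct study of the mutation maps on the equatorial plane $x+y+z=0$, proceeding in the order: full-dimensional cones, then the lower-dimensional cones, then completeness and maximality. By Proposition~\ref{rat FB summary} the full-dimensional $B$-cones are exactly the full-dimensional cones of $\F_\rationals(T_0)$, i.e.\ the cones $C_\Lambda$ with $\Lambda$ a set of three pairwise compatible allowable curves spanning a $3$-dimensional cone. By Proposition~\ref{torus compatible} such a triple consists of three curves $\ccw(a,b)$ on pairwise Farey-neighbor slopes or of three curves $\cw(a,b)$ on pairwise Farey-neighbor slopes: a closed curve $\cl(a,b)$ is compatible only with $\cw(a,b)$ and $\ccw(a,b)$, which are incompatible with each other, so it can occur in no compatible triple. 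Substituting the shear coordinates from Proposition~\ref{explicit coords} and comparing with the formulas for $\phi_1,\dots,\phi_6$, I would show that each $\ccw$-triple cone has the form $\Phi_i(\Delta)$ with $i\in\{1,2,3\}$ and each $\cw$-triple cone the form $\Phi_i(\Delta)$ with $i\in\{4,5,6\}$, for a Farey triangle $\Delta$. The remaining point here is a fundamental-domain statement for the Farey tessellation: the three cyclic rotations and the orientation-reversing reflection of $(\S,p)$ act on slopes as the cyclic coordinate permutations built into $\phi_1,\phi_2,\phi_3$ (resp.\ $\phi_4,\phi_5,\phi_6$) and as the swap $\phi_i\leftrightarrow\phi_{i+3}$, and every Farey triangle has a unique image under this group lying inside $\{[a,b]:a\ge-1,\ b\ge1\}$, with one exception: the triangle on the slopes $0,\infty,-1$, whose $\ccw$- and $\cw$-triples give the orthants $\pm O$ and which has no image in that region. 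This yields the cones of \eqref{torus FB pmO} and \eqref{torus FB triangle}.

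For the remaining cones of $\F_\rationals(T_0)$ I use Proposition~\ref{torus compatible} once more: a compatible pair is either two $\ccw$ (resp.\ two $\cw$) curves on Farey-neighbor slopes, whose cones are the $2$-dimensional faces of the triangle cones above, or a pair $\{\cl(a,b),\ccw(a,b)\}$ or $\{\cl(a,b),\cw(a,b)\}$; by the explicit shear coordinates the latter two are precisely the cones $\Phi_i(\text{Farey ray})$ of \eqref{torus FB ray} for $i\in\{1,2,3\}$ and $i\in\{4,5,6\}$ respectively, each having one extreme ray $\reals_{\ge0}\b(T_0,\cl(a,b))$, a ray in the plane $x+y+z=0$. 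The singleton cones $C_{\{\cl(a,b)\}}$ are then the rational rays in that plane (each coprime integer vector with coordinate sum $0$ being a cyclic permutation of some $[-b,a,b-a]$), while $C_{\{\ccw(a,b)\}}$ and $C_{\{\cw(a,b)\}}$ are the rays through $\phi_i([a,b])$, which are faces of the triangle cones. Now since $\{\cl(a,b),\ccw(a,b)\}$ is a maximal compatible set, $C_{\{\cl(a,b),\ccw(a,b)\}}$ is a maximal cone of $\F_\rationals(T_0)$, so by Proposition~\ref{rat FB summary} it is contained in a $B$-cone; that $B$-cone cannot be $3$-dimensional, since (as noted below) the $3$-dimensional $B$-cones meet $x+y+z=0$ only at the origin whereas $C_{\{\cl(a,b),\ccw(a,b)\}}$ has a ray in that plane, so it equals $C_{\{\cl(a,b),\ccw(a,b)\}}$. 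Hence each Farey-ray cone is a genuine $B$-cone, and each rational ray in $x+y+z=0$ is an extreme ray, hence a face, of exactly the two Farey-ray cones through it.

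It remains to treat the plane $x+y+z=0$ and to assemble everything. One checks directly that $\pm O$ and every $\Phi_i(\Delta)$ meet $x+y+z=0$ only at the origin, since their spanning vectors either lie in a single orthant (for $\pm O$) or all have coordinate sum $+1$ or all have coordinate sum $-1$ (for $\Phi_i(\Delta)$). Since every mutation map $\eta_\k^B$ preserves the coordinate sum, it restricts to a piecewise-linear self-homeomorphism of the plane $x+y+z=0$; using this and the explicit form of $\eta_{21}^B$ together with the cones $D_j$ from the proof of Theorem~\ref{torus null tangle}, I would show that any two distinct rays in $x+y+z=0$ are separated by $\vsgn\circ\eta_\k^B$ for some $\k$, so that the $B$-cones contained in $x+y+z=0$ are exactly its rays and the origin; in particular every ray in that plane, rational or irrational, is its own $B$-cone, completing item~(4). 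Now let $\mathcal C$ be the collection of all the cones listed in the theorem together with their faces. Each cone of $\mathcal C$ is a face of a $B$-cone, hence lies in $\F_B$; and $\mathcal C$ is a complete fan, which one verifies by checking it is a fan and that it covers $\reals^3$ — the equatorial plane by its rays, and each open half-space by the three-dimensional cones and Farey-ray cones, via the $\phi_i$ and the Farey tessellation (the Farey rays appearing as the ``slits'' left between the families of Farey triangles converging to a rational direction of the equatorial plane, and the central triangle's images being $\pm O$). Since $\F_B$ is also a complete fan (Theorem~\ref{fan}) and $\mathcal C\subseteq\F_B$ with $\mathcal C$ closed under faces, a $B$-cone not in $\mathcal C$ would have a relative-interior point lying in some $\sigma'\in\mathcal C$, forcing that $B$-cone to be a face of $\sigma'$ and hence in $\mathcal C$; so $\mathcal C=\F_B$. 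Finally the maximality claims follow by inspection: a $3$-dimensional cone is never a proper face; a Farey-ray cone is not a face of any listed $3$-dimensional cone because its planar extreme ray has coordinate sum $0$, unlike every extreme ray of such a cone; an irrational ray in $x+y+z=0$ is not a face of any larger listed cone; and a rational ray there is precisely the common face of the two Farey-ray cones through it.

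I expect the decisive step to be proving that the $B$-cones inside the plane $x+y+z=0$ are exactly its rays, i.e.\ that the piecewise-linear dynamics of the maps $\eta_\k^B$ on this plane separates every pair of distinct rays. This is exactly where the non-locally-finite structure of $\F_B$ — infinitely many three-dimensional cones accumulating at the equatorial plane — is concentrated, and where the rational-part results of \cite{unisurface} give no traction. By contrast the Farey-tessellation bookkeeping used in the other steps, though it needs care in pinning down the fundamental region and the exceptional central triangle, is routine.
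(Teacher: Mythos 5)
Your architecture for the rational part matches the paper's: identify the full-dimensional cones of $\F_\rationals(T_0)$ with $\pm O$ and the $\Phi_i$-images of Farey triangles, identify the Farey-ray images as the remaining maximal cones of $\F_\rationals(T_0)$, and transfer to $\F_B$ via Proposition~\ref{rat FB summary}. But there is a gap already here: from ``the $B$-cone $D$ containing $C_{\{\cl(a,b),\ccw(a,b)\}}$ is not $3$-dimensional'' you jump to ``$D$ equals $C_{\{\cl(a,b),\ccw(a,b)\}}$.'' A $2$-dimensional $B$-cone containing $C$ could a priori be strictly larger --- in particular it could be the union of $C$ with the Farey-ray cone on the other side of the plane $x+y+z=0$ sharing the equatorial extreme ray. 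The paper closes this by observing that such a union would be a \emph{rational} $B$-cone that is not a cone of $\F_\rationals(T_0)$, contradicting the clause of Proposition~\ref{rat FB summary} that every rational $B$-cone is a cone in $\F_\rationals(T_0)$; you need that extra sentence (or the paper's density argument for why $D$ cannot extend past the non-equatorial extreme ray of $C$).

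The more serious gap is exactly the step you flag as decisive: item (4), that every ray in the plane $x+y+z=0$ is its own $B$-cone. You propose to prove it by showing that the piecewise-linear dynamics of $\eta_{21}^B$ on the cones $D_j$ separates any two distinct rays of the plane, but you do not carry this out, and it is not routine: iterating $\eta_{12}^B$ or $\eta_{21}^B$ alone sends $D_j$ to $D_{j\mp2}$ and therefore never separates two rays lying in the interior of a single $D_j$; one would have to combine the analogous dynamics for the other index pairs and prove, in effect, a minimality statement for the resulting action on the circle of directions in the plane. The paper avoids this entirely with a short indirect argument that you should adopt: if two non-parallel plane vectors lay in one $B$-cone $C$, convexity puts an integer point $[x'',y'',z'']$ of the plane in $C$; that integer point spans an extreme ray of a $2$-dimensional $B$-cone $C'$ (a Farey-ray image, already identified) not contained in the plane, and then $C\cap C'$ is not a face of $C$, contradicting Theorem~\ref{fan}. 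With that substitution, and with your (correct) completeness-plus-face-closure argument for concluding $\mathcal{C}=\F_B$, the proof goes through; as written, the central claim rests on an unexecuted sketch.
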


To prove Theorem~\ref{torus FB}, we need the following lemma.

\begin{lemma}\label{Farey decomp}
Every point in $\set{[a,\,b]\in\reals^2:b\ge1}$ is contained in some Farey triangle or ray.
\end{lemma}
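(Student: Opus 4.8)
The plan is to prove the lemma by an explicit Stern--Brocot descent. Fix $[a,b]$ with $b\ge 1$ and put $m=\lfloor a/b\rfloor$ (legitimate since $b>0$). The Farey points $X_0=[m,1]$ and $Y_0=[m+1,1]$ are Farey neighbors ($\det=-1$ and $m(m+1)\ge 0$), and solving $[a,b]=s_0X_0+t_0Y_0$ gives $t_0=a-bm\in[0,b)$ and $s_0=b-t_0\in(0,b]$; thus $s_0,t_0\ge0$ and $s_0+t_0=b\ge 1$. So $[a,b]$ lies in the ``truncated cone'' $C(X_0,Y_0):=\{sX_0+tY_0:s,t\ge0,\ s+t\ge1\}$.

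Next I would record the elementary subdivision fact. If $X,Y$ are Farey neighbors whose second coordinates are $\ge 1$, then $X+Y$ is again a Farey point (its coordinates are coprime since $\det(X,X+Y)=\det(X,Y)=\pm1$) and is a Farey neighbor of both $X$ and $Y$: the determinant is again $\pm1$, the second coordinates involved are all positive, and the first coordinates of $X$ and $X+Y$ (and of $Y$ and $X+Y$) have weakly the same sign because those of $X$ and $Y$ do. Moreover
\[
C(X,Y)=\operatorname{conv}\{X,Y,X+Y\}\ \cup\ C(X,X+Y)\ \cup\ C(X+Y,Y),
\]
and in the coordinates $(s,t)$ the middle triangle is exactly $\{0\le s,t\le1,\ s+t\ge1\}$; this is checked by distinguishing whether both of $s,t$ are $\le1$ or one exceeds $1$, re-associating $sX+tY=(s-t)X+t(X+Y)$ or $sX+tY=s(X+Y)+(t-s)Y$ as appropriate. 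In particular $\operatorname{conv}\{X,Y,X+Y\}$ is a Farey triangle.

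Now run the recursion, maintaining the invariant ``$[a,b]=sX+tY$ with $X,Y$ Farey neighbors having second coordinates $\ge1$, and $s,t\ge0$ with $s+t\ge1$.'' If $s=0$ then $[a,b]=tY$ with $t\ge1$, so $[a,b]$ is on the Farey ray of $Y$ and we stop (similarly if $t=0$). If $s\le1$ and $t\le1$ then $[a,b]\in\operatorname{conv}\{X,Y,X+Y\}$ and we stop. Otherwise $\max(s,t)>1$, and the subdivision lets us replace $(X,Y,s,t)$ by $(X,X+Y,s-t,t)$ when $s\ge t$, or by $(X+Y,Y,s,t-s)$ when $t\ge s$; either move preserves the invariant, since the new $s+t$ equals the old $\max(s,t)>1$, and the new Farey-neighbor pair still has second coordinates $\ge1$.

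The only real obstacle is termination. Let $M_k,m_k$ be the larger and smaller of the two coefficients after $k$ steps. Each step decreases $s+t$ by $m_k>0$, and one checks the new sum $M_{k+1}+m_{k+1}$ equals the old maximum $M_k$; hence $(M_k)$ is strictly decreasing. If the recursion never stopped, then $M_k>1$ forever (the triangle test never passes), so $M_k\downarrow L\ge1>0$, whence $m_k=M_{k-1}-M_k\to0$. But then $M_k>2m_k$ for all large $k$, which forces $m_{k+1}=m_k$ there; combined with $m_k\to0$ and $m_k>0$ this is absurd. So the recursion stops, exhibiting $[a,b]$ in a Farey triangle or on a Farey ray. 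Apart from this, the argument is routine bookkeeping: checking that the Farey-neighbor relation and the ``second coordinate $\ge1$'' condition persist under the moves, and that the three stopping/recursion cases are exhaustive.
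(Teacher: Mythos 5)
Your proof is correct, and it takes a genuinely different route from the paper's. The paper argues by induction on $\lceil|a|\rceil$: it handles the strip $0\le\lceil a\rceil\le1$ via Lemma~\ref{Farey neighbor}\eqref{Farey neighbor a=0} (whose Farey neighbors $[1,n]$ of $[0,1]$ supply triangles and rays covering that strip) and then pulls a general point back into the strip using the piecewise-linear maps $[a,\,b]\mapsto[a-b,\,b]$ and $[a,\,b]\mapsto[b-a,\,a]$, which preserve Farey points, triangles, and rays. You instead keep the point fixed and run a Stern--Brocot descent on cones: starting from the neighbors $[m,\,1]$, $[m+1,\,1]$ with $m=\lfloor a/b\rfloor$, you repeatedly subdivide the truncated cone $C(X,Y)$ by the mediant $X+Y$ and track the barycentric coefficients $(s,t)$. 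The two are of course dual faces of the same Euclidean-algorithm phenomenon, but your version is self-contained (it never invokes Lemma~\ref{Farey neighbor}) and, notably, it supplies an explicit and airtight termination argument — the monotonicity of $M_k$ with $M_{k+1}+m_{k+1}=M_k$ and the eventual-constancy contradiction — whereas the paper's stated induction measure $\lceil|a|\rceil$ does not obviously decrease under the map $[a,\,b]\mapsto[b-a,\,a]$ (e.g.\ $[2,\,100]\mapsto[98,\,2]$), so your argument is in that respect the more carefully justified of the two. All the checks you flag as routine (the mediant is a Farey point and a neighbor of both parents, the three cones cover $C(X,Y)$, the second coordinates stay $\ge1$, and the stopping cases land exactly in $\operatorname{conv}\{X,Y,X+Y\}$ or on the Farey ray $\{cY:c\ge1\}$) do go through as claimed.
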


\begin{proof}
Suppose $[a,\,b]\in\reals^2$ has $b\ge1$.
We argue by induction on $\lceil |a|\rceil$ (the smallest integer greater than or equal to $|a|$).
For $0\le\lceil a\rceil\le 1$, the lemma follows from Lemma~\ref{Farey neighbor}\eqref{Farey neighbor a=0}.
If $a>1$, then we apply the piecewise-linear map given by $[a,\,b]\mapsto[a-b,\,b]$ (if $a\ge b$) or $[a,\,b]\mapsto[b-a,\,a]$ (if $a\le b$).
This map preserves Farey points, triangles, and rays.
By induction, the image of $[a,\,b]$ under the map is contained in some Farey triangle or ray.
Therefore, the same is true of $[a,\,b]$.
If $a<0$, then we argue symmetrically and appeal to induction.
\end{proof}

\begin{proof}[Proof of Theorem~\ref{torus FB}]
We first explicitly construct $\F_\rationals(T_0)$.
The rays of $\F_\rationals(T_0)$ are spanned by the shear coordinates of allowable curves in $(\S,p)$.
Proposition~\ref{explicit coords} shows that these rays are the images of standard Farey points under the maps $\Phi_i$ and the rational rays in the plane $x+y+z=0$.
We have $\lim_{c\to\infty}\frac{1}{c}\phi_1([ca,\,cb])=[-b,\,a,\,b-a]$ for a Farey point $[a,\,b]$ with $a\ge0$ and $b\ge 1$.
The proof of Proposition~\ref{explicit coords} shows that $[-b,\,a,\,b-a]$ is $\b(T_0,\cl(a,b))$.
By Proposition~\ref{torus compatible}, we see that $\Phi_1$ takes the Farey ray with vertex $[a,\,b]$ to the $2$-dimensional cone of $\F_\rationals (T_0)$ spanned by the shear coordinates of $\cl(a,b)$ and $\ccw(a,b)$.
By symmetry, $\Phi_i$ maps this Farey ray to a $2$-dimensional cone of $\F_\rationals (T_0)$ for all $i$.

Proposition~\ref{torus compatible} also implies that the full-dimensional cones of $\F_\rationals(T_0)$ are obtained as follows:
Take exactly one representative of each antipodal pair of Farey triangles.
Each representative indexes two full-dimensional cones.
One is obtained by computing shear coordinates of $\ccw(\cdot,\cdot)$ of each vertex and taking nonnegative span of the three resulting vectors.
The other is obtained in the same way with $\cw(\cdot,\cdot)$.
We deal first with the cones obtained from curves with counterclockwise spirals.

The proof of Proposition~\ref{explicit coords} establishes that, for Farey triangles contained in the region $\set{[a,\,b]\in\reals^2:a\ge0,\,b>0}$, the maximal cones for curves with counterclockwise spirals are obtained by applying the map $\Phi_1$.
Furthermore, one checks directly that the shear coordinate vector of $\ccw([1,\,-n])$ is $[1-n,\,0,\,n]=\phi_1([-1,\,n])$.
Thus the map $\Phi_1$ takes all Farey triangles contained in $\set{[a,\,b]\in\reals^2:a\ge-1,\,b\ge1}$ to full-dimensional cones of $\F_\rationals(T_0)$.
The map $[a,\,b]\mapsto[b,\,a-b]$ takes the region $\set{[a,\,b]\in\reals^2:a+b\ge1,\,b\le1}$ to $\set{[a,\,b]\in\reals^2:a\ge-1,\,b\ge1}$ and, as pointed out in the proof of Proposition~\ref{explicit coords}, corresponds to a cyclic permutation of the shear coordinates of $\ccw(a,b)$.
The map $[a,\,b]\mapsto[a,\,-a-b]$ takes the region $\set{[a,\,b]\in\reals^2:a\ge1,\,a+b\le1}$ to $\set{[a,\,b]\in\reals^2:a\ge-1,\,b\ge1}$ and corresponds to the other cyclic permutation of shear coordinates of $\ccw(a,b)$.
The only remaining pair of antipodal Farey triangles is $[0,\,1]$, $[-1,\,0]$, $[-1,\,1]$ and its antipodal opposite.
The shear coordinates of $\ccw(a,\,b)$ for vertices $[a,\,b]$ of this triangle span the cone $O$.

We have established that the maximal cones in $\F_\rationals(T_0)$ spanned by curves with counterclockwise spirals are given by $O$ and the cones described in \eqref{torus FB triangle} for $i=1,2,3$.
To construct the maximal cones in $\F_\rationals(T_0)$ spanned by curves with clockwise spirals, we use the same symmetry we used in the proof of Proposition~\ref{explicit coords} (reflection through the line $a=b$).
Specifically, the cones for clockwise spirals are obtained from the cones for counterclockwise spirals by switching $a$ and $b$, swapping the first two coordinates, and negating the shear coordinates.
Thus the cones for clockwise spirals are $-O$ and the cones described in \eqref{torus FB triangle} for $i=4,5,6$.
We have now shown that the maximal cones in $\F_\rationals(T_0)$ are given by \eqref{torus FB pmO}, \eqref{torus FB triangle}, and~\eqref{torus FB ray}.
Each cone in \eqref{torus FB ray} has a rational ray in the plane $x+y+z=0$ as an extreme ray, and all rational cones in this plane occur, by Proposition~\ref{explicit coords}.

Let $U$ be the set $\set{[a,\,b]\in\reals^2:a\ge-1,\,b\ge1}$.
Then 
\[\phi_1(U)=\set{[x,\,y,\,z]\in\reals^3:x+y+z=1,\,x\le0,\,y\ge0}\] 
and $\phi_2(U)$ and $\phi_3(U)$ are described similarly.
We see that the plane $x+y+z=1$ is the union 
\[\phi_1(U)\cup\phi_2(U)\cup\phi_3(U)\cup\set{[x,\,y,\,x]\in\reals^3:x+y+z=1,\,x\ge0,\,y\ge0,\,z\ge0}.\]
By this observation, we now show that every point in $\reals^3$ outside of the plane $x+y+z=0$ is in some cone of $\F_\rationals(T_0)$.
Given such a point, some positive scaling of the point is in the plane $x+y+z=1$ or in the plane $x+y+z=-1$.
If the point scales to the plane $x+y+z=1$, then it is in $O$ or in $\Phi_1(U)$, $\Phi_2(U)$, or $\Phi_3(U)$.
Lemma~\ref{Farey decomp} implies that the point is in $O$ or in the image of some Farey triangle or ray under $\Phi_1$, $\Phi_2$, or $\Phi_3$.
Thus the point is in some cone of $\F_\rationals(T_0)$.
By symmetry, a point with a positive scaling to the plane $x+y+z=-1$ is also in some cone in $\F_\rationals(T_0)$.
We see that the images, under $\Phi_i$ for $i=1,\ldots,6$ of Farey triangles and rays in $U$ cover the set of real vectors outside of the plane $x+y+z=0$.
Furthermore, since the images of Farey rays are two-dimensional and there are countably many Farey rays, the images of Farey triangles in $U$ are dense in $\reals^3$.

Next, we show that all of the cones in $\F_\rationals(T_0)$ are cones in $\F_B$.
Proposition~\ref{rat FB summary} states that the full-dimensional cones in $\F_\rationals(T_0)$ are $B$-cones.
The image $C$ of a Farey ray under some map $\Phi_i$ is contained in a $B$-cone $D$ by Proposition~\ref{rat FB summary}.
The images of Farey triangles are disjoint from the plane $x+y+z=0$, except at the origin, but $D$ contains a nonzero point in the plane $x+y+z=0$.
Thus $D$ is not the image of a Farey triangle in $U$.
Since images of Farey triangles in $U$ are dense in $\reals^3$, we see that $D$ is $2$-dimensional.
Furthermore, the only possibilities for $D$ are that $D=C$ or that $D$ is the union of $C$ with the other cone in $\F_\rationals(T_0)$ (on the other side of the plane $x+y+z=0$) that shares an extreme ray with $C$.
We rule out the latter case, however, because in that case $D$ is a rational $B$-cone but not a cone in $\F_\rationals(T_0)$, contradicting Proposition~\ref{rat FB summary}.
Thus $C$ equals the $B$-cone $D$.

It remains only to show that every ray in the plane $x+y+z=0$ is a distinct $B$-cone.
Suppose to the contrary that two non-parallel vectors $[x,\,y,\,z]$ and $[x',\,y',\,z']$ in the plane $x+y+z=0$ are in the same $B$-cone $C$.
Find an integer point $[x'',\,y'']=c_1[x,\,y]+c_2[x',\,y']$ with $c_1>0$ and $c_2>0$ and set $z''=-x''-y''$.
Then $[x'',\,y'',\,z'']=c_1[x,\,y,\,z]+c_2[x',\,y',\,z']$ is in $C$ because $B$-cones are convex cones.
Also $[x'',\,y'',\,z'']$ is an integer vector in the plane $x+y+z=0$, so as shown above, it spans an extreme ray of a $2$-dimensional maximal $B$-cone $C'$ not contained in the plane $x+y+z=0$.
But $C\cap C'$ is not a face of $C$, contradicting Theorem~\ref{fan}.
\end{proof}

In \cite[Proposition~5.2]{unisurface}, the $\g$-vectors of cluster variables are realized in terms of shear coordinates of allowable curves, and the cones spanned by $\g$-vectors of clusters are realized as the nonnegative linear spans of sets of pairwise compatible allowable curves.
Combining this realization with Theorem~\ref{torus FB}, we obtain the following corollary.

\begin{cor}\label{markov g}
Let $B$ be the matrix of Theorem~\ref{torus FB}, so that $B^T=\begin{bsmallmatrix*}[r]0&-2&2\\2&0&-2\\-2&2&0\\\end{bsmallmatrix*}$.
Then the $\g$-vectors of cluster variables associated to $B^T$ are the cyclic permutations of vectors $[1-b,\,a+1,\,b-a-1]$, for all (possibly infinite) positive rational slopes with standard form $b/a$.
The cones spanned by $\g$-vectors of maximal sets of pairwise compatible cluster variables are exactly the images, under the maps $\Phi_1$, $\Phi_2$, and $\Phi_3$, of Farey triangles contained in the region $\set{[a,\,b]\in\reals^2:a\ge-1,\,b\ge1}$.
\end{cor}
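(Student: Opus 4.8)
The plan is to obtain Corollary~\ref{markov g} by assembling the computations of Sections~\ref{expl shear} and~\ref{fan sec} and invoking the realization of $\g$-vectors from \cite[Proposition~5.2]{unisurface}. Applied to the triangulation $T_0$ (for which $B(T_0)=B$), that proposition identifies the $\g$-vector with respect to $T_0$ of the cluster variable attached to a tagged arc $\alpha$ of the cluster algebra of $B^T$ with the shear coordinate vector $\b(T_0,\kappa(\alpha))$, the tagging being the one for which $\kappa(\alpha)$ spirals counterclockwise into $p$; thus the relevant curves are the $\ccw(a,b)$. It likewise identifies the cone spanned by the $\g$-vectors of a cluster with the nonnegative span of the vectors $\b(T_0,\lambda)$ as $\lambda$ runs over a maximal pairwise-compatible family of such curves. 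So I reduce the corollary to reading off (i) the set of shear coordinate vectors of the curves $\ccw(a,b)$ and (ii) the maximal pairwise-compatible families of such curves together with the cones they span.

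For (i): Proposition~\ref{explicit coords}\eqref{gs}, together with the case analysis in its proof, states precisely that the shear coordinates of the curves $\ccw(a,b)$ are the cyclic permutations of $[1-b,\,a+1,\,b-a-1]$ as $b/a$ runs over the standard forms of all positive and infinite slopes. The three cyclic permutations attached to a fixed such $b/a$ belong to three distinct arcs: as recorded in that proof, the substitutions $[a,\,b]\mapsto[-b,\,a+b]$ and $[a,\,b]\mapsto[-a-b,\,a]$ set up bijections between the positive-and-infinite slopes and the two remaining ranges of slopes, under which the shear coordinates are cyclically permuted. Hence, as $b/a$ runs over positive and infinite slopes and we take all three cyclic permutations, we obtain the $\g$-vector of every cluster variable exactly once; this is the first assertion.

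For (ii): by Proposition~\ref{torus compatible}, a maximal pairwise-compatible family of curves $\ccw(a,b)$ is a triple indexed (after choosing suitable representatives of the slopes) by the three vertices of a Farey triangle, and conversely. By the definition of the maps $\Phi_i$ and the shear-coordinate computation in Proposition~\ref{explicit coords}, the nonnegative span of the three corresponding shear-coordinate vectors is the image of that Farey triangle under one of $\Phi_1,\Phi_2,\Phi_3$. The proof of Theorem~\ref{torus FB} already carried out the bookkeeping that collapses this to a single fundamental region: the substitutions $[a,\,b]\mapsto[b,\,a-b]$ and $[a,\,b]\mapsto[a,\,-a-b]$ used there carry the two auxiliary regions bijectively onto $U=\set{[a,\,b]:a\ge-1,\,b\ge1}$ while realizing the two nontrivial cyclic permutations. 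Hence the $\g$-vector cone of every cluster is the image under $\Phi_1$, $\Phi_2$, or $\Phi_3$ of a Farey triangle contained in $U$, and conversely every such image is a $\g$-vector cone. The one cluster not absorbed into this scheme is $T_0$ itself, whose $\g$-vectors are $\e_1,\e_2,\e_3$; its cone $O$ is the common $\Phi_i$-image of the Farey triangle $\set{[-1,\,0],[0,\,1],[-1,\,1]}$, which meets but is not contained in $U$, so it must be recorded separately --- exactly as $O$ appears in Theorem~\ref{torus FB}\eqref{torus FB pmO}. This gives the second assertion.

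There is no essential obstacle here beyond bookkeeping; the substance has already been done in Propositions~\ref{explicit coords} and~\ref{torus compatible} and in the proof of Theorem~\ref{torus FB}. The points that require care are: fixing the spiral-orientation convention so that the output is for $B^T$ (counterclockwise spirals) rather than $B$ (clockwise spirals, which would instead produce $-O$ and the $\Phi_i$-images for $i=4,5,6$); matching the three cyclic permutations of $[1-b,\,a+1,\,b-a-1]$ and the three maps $\Phi_1,\Phi_2,\Phi_3$ to the three ranges of slopes consistently; and checking that the Farey triangles in $U$, together with $\set{[-1,\,0],[0,\,1],[-1,\,1]}$, index each cluster exactly once, so that the list is neither redundant nor incomplete. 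The only external input is \cite[Proposition~5.2]{unisurface}, including the tagging convention it uses.
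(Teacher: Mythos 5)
Your proof is correct and follows essentially the same route as the paper, which obtains the corollary by combining \cite[Proposition~5.2]{unisurface} with Theorem~\ref{torus FB} (resting on Propositions~\ref{explicit coords} and~\ref{torus compatible} and the bookkeeping in the proof of Theorem~\ref{torus FB}); you have simply made that combination explicit. Your remark about the orthant is well taken: the initial cluster's cone $O$ is the $\Phi_i$-image of the Farey triangle with vertices $[-1,\,0]$, $[0,\,1]$, $[-1,\,1]$, which is not contained in $\set{[a,\,b]\in\reals^2:a\ge-1,\,b\ge1}$, so the second sentence of the corollary as literally stated elides that one cone (it is listed separately as item~\eqref{torus FB pmO} of Theorem~\ref{torus FB}).
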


The cones spanned by $\g$-vectors of maximal sets of pairwise compatible cluster variables in particular constitute a fan.
This fan, as it intersects the plane ${x+y+z=1}$, is illustrated in Figure~\ref{markov g fig}.
The $\g$-vectors are shown as blue dots.
(Cf. \cite[Figure~1]{FG2011} and \cite[Figure~2]{Najera}.)
\begin{figure}[ht]
\scalebox{1}{\includegraphics{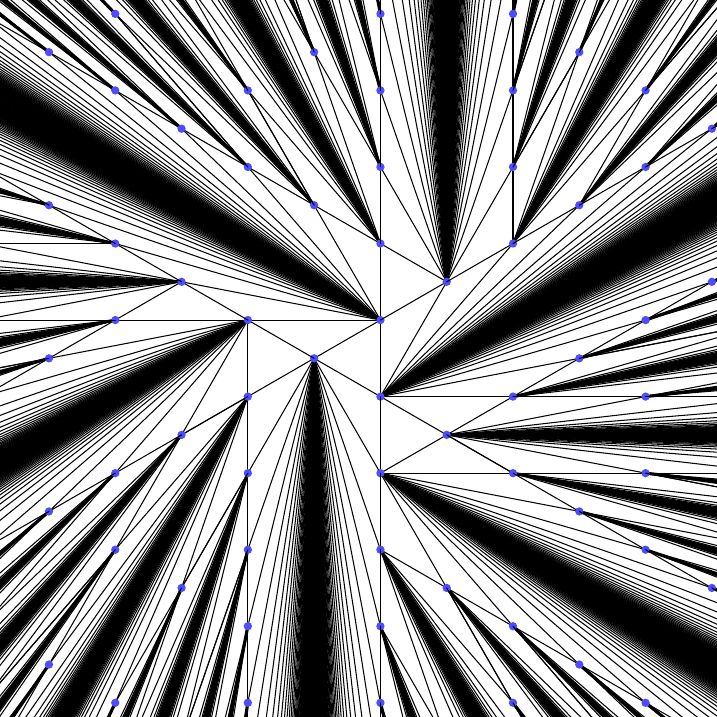}}
\caption{The $\g$-vector fan for $B^T$ as in Corollary~\ref{markov g}}
\label{markov g fig}
\end{figure}

\begin{remark}\label{rescale remark}
An exchange matrix $B'$ is a \newword{rescaling} of an exchange matrix $B$ if $B'=\Sigma^{-1} B\Sigma$ for some diagonal matrix $\Sigma$ with positive entries.
Up to symmetry, the rescalings of $B=\begin{bsmallmatrix*}[r]0&2&-2\\-2&0&2\\2&-2&0\\\end{bsmallmatrix*}$ are 
$B'=\begin{bsmallmatrix*}[r]0&4&-2\\-1&0&1\\2&-4&0\\\end{bsmallmatrix*}$ and $B'=\begin{bsmallmatrix*}[r]0&4&-4\\-1&0&2\\1&-2&0\\\end{bsmallmatrix*}$.
In \cite[Section~7]{universal}, it is explained how to obtain the mutation fan $\F_{B'}$, $R$-bases for $B'$, $\g$-vectors for $(B')^T$, and so forth from the corresponding constructions for $B$, when $B'$ is a rescaling of $B$.
For example, Theorem~\ref{torus FB} combines with \cite[Proposition~7.8(3)]{universal} to yield the following statement.
\begin{cor}\label{torus FB rescale 1}
Let $B'=\begin{bsmallmatrix*}[r]0&4&-2\\-1&0&1\\2&-4&0\\\end{bsmallmatrix*}$.
Then $\F_{B'}$ consists of the following cones and their faces:
\begin{enumerate}
\item \label{torus FB pmO rescale 1}
The nonnegative and nonpositive orthants $\pm O$.
\item \label{torus FB triangle rescale 1}
For each $i=1,2,3,4,5,6$, the cone obtained by applying the map $[x,\,y,\,z]\mapsto[x,\,2y,\,z]$ to the image under $\Phi_i$ of all Farey triangles contained in the region $\set{[a,\,b]\in\reals^2:a\ge-1,\,b\ge1}$.
\item \label{torus FB ray rescale 1}
For each $i=1,2,3,4,5,6$, the cone obtained by applying the map $[x,\,y,\,z]\mapsto[x,\,2y,\,z]$ to the image under $\Phi_i$ of all Farey rays whose vertex is $[a,\,b]$ with $a\ge0$ and $b\ge 1$.
\item
All rays contained in the plane $2x+y+2z=0$.
\end{enumerate}
All of the cones described above are maximal except for the rational rays in the plane $2x+y+2z=0$, each of which is a proper face of the image of two Farey rays.
\end{cor}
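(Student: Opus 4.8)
The plan is to deduce Corollary~\ref{torus FB rescale 1} from Theorem~\ref{torus FB} by the linear change of coordinates attached to the rescaling, following the recipe of \cite[Section~7]{universal}. First I would pin down the diagonal matrix realizing the rescaling. Writing $\Sigma=\diag(d_1,d_2,d_3)$, the entries of $\Sigma^{-1}B\Sigma$ are $(d_j/d_i)\,b_{ij}$, and a direct check of the six off-diagonal entries shows that $B'=\Sigma^{-1}B\Sigma$ exactly when $d_2/d_1=d_2/d_3=2$ and $d_3/d_1=1$; so we may take $\Sigma=\diag(1,2,1)$. (This is the rescaling already singled out in Remark~\ref{rescale remark}.)

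Next I would invoke \cite[Proposition~7.8(3)]{universal}: for a rescaling $B'=\Sigma^{-1}B\Sigma$ with $\Sigma$ having positive diagonal entries, the mutation fan $\F_{B'}$ is the image of $\F_B$ under the linear isomorphism $\x\mapsto\Sigma\x$ of $\reals^3$. (Concretely, one checks from \eqref{mutation map def} that $\eta_\k^{B'}=\Sigma\circ\eta_\k^B\circ\Sigma^{-1}$, using that matrix mutation commutes with rescaling, and since $\Sigma$ is positive it commutes with $\vsgn$, so $\equiv^{B'}$-classes are the $\Sigma$-images of $\equiv^B$-classes.) With $\Sigma=\diag(1,2,1)$ this isomorphism is precisely the map $[x,\,y,\,z]\mapsto[x,\,2y,\,z]$ named in the statement. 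Because it is a linear isomorphism, it sends the complete fan $\F_B$ to a complete fan, preserving containments, faces, dimensions, and maximality; so the proof reduces to pushing each item of the list in Theorem~\ref{torus FB} through it.

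Carrying this out: the orthants $\pm O$ are each fixed setwise, since $\Sigma$ has positive diagonal entries, which yields item~\eqref{torus FB pmO rescale 1}. The images under $\Phi_i$ of Farey triangles (resp.\ Farey rays) in the region $\set{[a,\,b]\in\reals^2:a\ge-1,\,b\ge1}$ (resp.\ with vertex $[a,\,b]$, $a\ge0$, $b\ge1$) become exactly their images under $[x,\,y,\,z]\mapsto[x,\,2y,\,z]$, giving \eqref{torus FB triangle rescale 1} and \eqref{torus FB ray rescale 1}. Finally, if $x+y+z=0$ and $[u,\,v,\,w]=[x,\,2y,\,z]$, then $2u+v+2w=2(x+y+z)=0$, so the plane $x+y+z=0$ maps onto the plane $2x+y+2z=0$; and the assertion that each rational ray in this plane is a proper face of the images of exactly two Farey rays is the $\Sigma$-image of the corresponding assertion in Theorem~\ref{torus FB}.

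I do not expect a genuine obstacle here, since once the rescaling matrix and the transport principle \cite[Proposition~7.8(3)]{universal} are in place this is transport of structure. The only points requiring care are convention-matching: confirming that the relevant isomorphism is $\x\mapsto\Sigma\x$ rather than $\x\mapsto\Sigma^{-1}\x$ (equivalently, that the scaling factor on the $y$-coordinate is $2$ and not $\tfrac12$), and checking that the plane $x+y+z=0$ is carried to $2x+y+2z=0$ under that map. Both are settled by the short computations indicated above.
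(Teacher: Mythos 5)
Your proposal is correct and is exactly the paper's argument: the paper derives this corollary in one sentence by combining Theorem~\ref{torus FB} with \cite[Proposition~7.8(3)]{universal}, which is precisely the transport-of-structure step you carry out. Your added computations (identifying $\Sigma=\diag(1,2,1)$, checking $\eta_\k^{B'}=\Sigma\circ\eta_\k^B\circ\Sigma^{-1}$ and the image of the plane $x+y+z=0$) are accurate and simply make explicit what the citation leaves implicit.
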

\end{remark}

\section{A positive $\reals$-basis}\label{real sec}
In this section, we complete the proof of Theorem~\ref{markov main} by proving the assertion about universal geometric coefficients over $\reals$.
Theorem~\ref{torus FB} implies that the vectors listed in Theorem~\ref{markov main} (1), (2) and (3$'$) consist of one nonzero vector $\v_\rho$ in each ray $\rho$ of $\F_B$.
By Proposition~\ref{pos reals FB converse}, to show that these vectors are a $\reals$-basis for $B$, we need only verify that they are a $\reals$-independent set for $B$. 
The assertion in Theorem~\ref{markov main} then follows by Theorem~\ref{basis univ}.

We begin by realizing the irrational rays of $\F_B$ within the framework of curves in $(\S,p)$.
For each real or infinite slope $\sigma$, choose a line in $\reals^2$ with slope $\sigma$, not containing any integer points.
(Fixing $\sigma$, the line is determined by its $x$-intercept.
Thus, to avoid integer points, there are only countably many $x$-intercepts that must be avoided.)
Let $\lambda(\sigma)$ be the projection of this line to the once-punctured torus.
If $\sigma$ is a rational number or $\infty$ with standard form $b/a$, then $\lambda(\sigma)=\cl(a,b)$.
Otherwise $\lambda(\sigma)$ is dense in the torus.

The \newword{normalized shear coordinates} $\overline{\b}(T_0,\lambda(\sigma))=(\overline{b}_\gamma(T_0,\lambda(\sigma)):\gamma\in T_0)$ of $\lambda(\sigma)$ with respect to $T_0$ are defined as follows:
Start at any point on $\lambda(\sigma)$, fix a direction on the curve and mark off an arc length $d$ in that direction.
(The arc length $d$ is calculated in the usual infinitesimal metric on the unpunctured torus, inherited from $\reals^2$.)
The marked off segment of the curve has shear coordinates given by the rule shown in Figure~\ref{shear fig}.
For large enough $d$, these shear coordinates are nonzero and so can be normalized to unit length in the usual norm on $\reals^3$.
The normalized shear coordinates of $\lambda(\sigma)$ are given by the limit of these unit vectors, as $d$ goes to infinity.
The discussion below in particular shows that this limit exists.

Suppose $\sigma$ is positive or infinite.
Consider a segment of the line with slope $\sigma$ from $y=0$ to $y=k$, and associate a word in $\r$ and $\t$ to this segment as in the proof of Proposition~\ref{explicit coords}.
This word starts and ends with $\t$ and has $k+1$ instances of $\t$ and some number $\ell$ of instances of $\r$.
If $\sigma\ge1$, then this word has no two consecutive letters $\r$.
Thus the word has $\r\t$ $\ell$ times, has $\t\r$ $\ell$ times, and has $\t\t$ $k-\ell$ times.
The shear coordinates of the segment are 
\[\ell[-1,\,0,\,0]+\ell[0,\,1,\,0]+(k-\ell)[-1,\,0,\,1]=[-k,\,\ell,\,k-\ell].\]
If $\sigma\le1$, then the word has no two consecutive letters $\t$.
Thus it has $\r\t$ $k$ times, $\t\r$ $k$ times, and $\r\r$ $\ell-k$ times so the shear coordinates of the segment are 
\[k[-1,\,0,\,0]+k[0,\,1,\,0]+(\ell-k)[0,\,1,\,-1]=[-k,\,\ell,\,k-\ell].\]
As $k$ goes to infinity, the ratio $k/\ell$ goes to $\sigma$, so the limit of the unit vectors in the direction $[-k,\,\ell,\,k-\ell]$ is the unit vector in the direction $[-\sigma,\,1,\,\sigma-1]$.

Nonpositive slopes are dealt with by symmetry as in the proof of Proposition~\ref{explicit coords}.
Specifically, if $-1<\sigma\le0$ then the map $[a,\,b]\mapsto [-b,\,a+b]$ takes $[1,\,\sigma]$ to $[-\sigma,\,\sigma+1]$, which corresponds to a slope $\frac{-\sigma-1}{\sigma}$.
If $\sigma\le-1$ then the map $[a,\,b]\mapsto[-a-b,\,a]$ takes $[1,\sigma]$ to $[-1-\sigma,1]$, corresponding to a slope $\frac{-1}{1+\sigma}$.
We see that the normalized shear coordinates of curves $\lambda(\sigma)$ are the cyclic permutations of the unit vectors in the direction of $[-\sigma,\,1,\,\sigma-1]$ for positive real numbers $\sigma$ or $\sigma=\infty$.
(In the latter case, the unit vector is $\frac1{\sqrt{2}}[-1,\,0,\,1]$.)
This calculation shows in particular that the normalized shear coordinates of $\lambda(\sigma)$ depend only on $\sigma$ and not on the particular choice of a line of slope sigma.
When $\sigma$ is rational or infinite with standard form $b/a$, the normalized shear coordinates of $\lambda(\sigma)$ are obtained from the shear coordinates of $\cl(a,b)$ by normalizing to unit length.

As part of the proof of Proposition~\ref{explicit coords}, we showed that all nonzero integer vectors $[x,\,y,\,z]$ with $x+y+z=0$ such that $x$, $y$, and $z$ have no common factors appear as cyclic permutations of vectors $[-b,\,a,\,b-a]$ such that $b/a$ is the standard form of a rational slope or $\infty$.
The same proof shows that the map $\sigma\mapsto\b(T,\lambda(\sigma))$ is a bijection from real or infinite slopes to unit vectors in the plane $x+y+z=0$.
Thus by Theorem~\ref{torus FB}, the rays of the mutation fan $\F_B$ are spanned by the shear coordinates of curves $\cw(a,b)$ and $\ccw(a,b)$ and by the normalized shear coordinates of curves $\lambda(\sigma)$.

The discussion above establishes that normalized shear coordinates of projected lines exist with respect to the fixed triangulation $T$, but the analogous procedure defines normalized shear coordinates of projected lines exist with respect to any triangulation.

These considerations also suggest the definition of a \newword{real tangle} in the once-punctured torus.
This is a finite collection of distinct curves of the form $\ccw(a,b)$ or $\cw(a,b)$ for standard Farey points $[a,\,b]$ or $\lambda(\sigma)$ for real or infinite slopes $\sigma$.
Each curve is given a real weight.
The shear coordinates of a real tangle are the weighted sum of (normalized) shear coordinates of the curves in the tangle.
A real tangle is null if its shear coordinates are zero with respect to every triangulation.

Proposition~\ref{null tangle is B coher rel} extends to real null tangles, as we now explain.
To begin with, \cite[Theorem~3.9]{unisurface} (which is part of \cite[Theorem~13.5]{cats2}) states that when a triangulation $T$ is altered by a flip, the shear coordinate vector of an allowable curve $\lambda$ is altered by the corresponding mutation map.
Since the normalized shear coordinates of curves $\lambda(\sigma)$ are limits of normalizations of shear coordinates of allowable curves, and since mutation maps are continuous, \cite[Theorem~3.9]{unisurface} extends to curves $\lambda(\sigma)$.
The extension of Proposition~\ref{null tangle is B coher rel} parallels the proof given in \cite[Proposition~7.9]{unisurface} for (integer) null tangles:
The extension of \cite[Theorem~3.9]{unisurface} implies that, for any triangulation $T$, a real null tangle corresponds to a linear relation among normalized shear coordinates that is preserved under all mutation maps.
Then \cite[Proposition~2.3]{unisurface} says that a real null tangle corresponds to a $B(T)$-coherent linear relation among normalized shear coordinates.

Proposition~\ref{one positive or one negative} also extends to real tangles.
The proof is a straightforward concatenation of the extension of Proposition~\ref{null tangle is B coher rel} with \cite[Proposition~4.12]{universal}.
The latter is a version of Proposition~\ref{one positive or one negative} which holds for all $B$-coherent linear relations.

The extension of  Proposition~\ref{null tangle is B coher rel} to real tangles means that we can complete the proof of Theorem~\ref{markov main} by establishing the following ``Real Null Tangle Property'' of the once-punctured torus.
(We emphasize, however, that we have no definition, for general surfaces, of a real tangle.)

\begin{theorem}\label{realnulltangle}
Every real null tangle in $(\S,p)$ is trivial.   
\end{theorem}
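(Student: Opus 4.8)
The plan is to follow the proof of Theorem~\ref{torus null tangle}, now using the extensions of Proposition~\ref{one positive or one negative} and Proposition~\ref{null tangle is B coher rel} to real tangles discussed above. The first step is to show, in parallel with Proposition~\ref{null tangle lines only}, that the support of a real null tangle $\Xi$ consists only of curves $\lambda(\sigma)$. Given $\ccw(e,f)\in\Xi$, since $\Xi$ is finite one can choose a slope $\bar q<f/e$ such that no curve of $\Xi$ has slope in $(\bar q,f/e)$, and then use Lemma~\ref{Farey neighbor} to build a triangulation $T$ with an arc $\gamma$ of slope $f/e$ such that $b_\gamma(T,\ccw(e,f))=1$ and the set of slopes $\sigma$ for which $b_\gamma(T,\lambda(\sigma))>0$ --- where this means the normalized coordinate when $\sigma$ is irrational --- is contained in $(\bar q,f/e)$. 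That set is the same open interval as in the rational case, because whether $b_\gamma(T,\lambda(\sigma))>0$ is determined by the behavior of $\lambda(\sigma)$ at the two triangles adjacent to $\gamma$, hence by $\sigma$ alone. The extended Proposition~\ref{one positive or one negative} then forces the weight of $\ccw(e,f)$ to vanish, and the $\cw$ case follows from the reflection symmetry used in Proposition~\ref{null tangle lines only}.

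Once the support is reduced to curves $\lambda(\sigma_1),\dots,\lambda(\sigma_r)$ with distinct slopes, the extended Proposition~\ref{null tangle is B coher rel}, applied with $T_0$, identifies $\Xi$ with a $B$-coherent linear relation $\sum_i w_i\v_i=\mathbf 0$ among the distinct vectors $\v_i=\overline{\b}(T_0,\lambda(\sigma_i))$, all of which lie in the plane $P$ given by $x+y+z=0$; it therefore suffices to show that every $B$-coherent linear relation among vectors of $P$ is trivial. For a curve $\lambda(\sigma_i)$ with $\sigma_i$ rational this is already contained in the proof of Theorem~\ref{torus null tangle}: pass to a triangulation $T'$ with $\b(T',\lambda(\sigma_i))=[1,-1,0]$, the fixed vector of the twist $\eta_{21}^B$, and run the telescoping argument verbatim; that argument uses only that the remaining vectors lie in $P$, not that they are integral. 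So every rational-slope curve in the support has weight zero, leaving a $B$-coherent relation $\sum_i w_i\v_i=\mathbf 0$ in which every $\sigma_i$ is irrational and every $\v_i$ spans an irrational ray of $P$.

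To conclude I would treat each remaining curve $\lambda(\sigma_i)$ in turn. Since $\eta_{21}^B$ and all of its conjugates fix only rational rays of $P$, one cannot move $\v_i$ onto a fixed ray; instead, choose a rational slope $q$ on a prescribed side of $\sigma_i$, nearer to $\sigma_i$ than any other $\sigma_j$ and near enough that, for the conjugate twist $\tau$ fixing the ray spanned by $\overline{\b}(T_0,\lambda(q))$, the cone of its cone family $\{D_j\}$ that contains $\v_i$ meets none of the other $\v_j$. For such $q$ that cone has extremal index among the finitely many cones meeting $\{\v_1,\dots,\v_r\}$, and $\v_i$ lies in its interior since the cone boundary rays are rational. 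Applying a suitable power of $\tau$ to carry all the $\v_j$ into cones $D_0,D_1,\dots$, with $\v_i$ alone in whichever of $D_0$ or $D_1$ it lands in, the base step of the telescoping argument of Theorem~\ref{torus null tangle} --- obtained by restricting \eqref{piecewise eta}, or its difference with \eqref{linear eta}, for $\k=\emptyset$ to the appropriate coordinate --- shows that $w_i\v_i$ lies on a rational line; as $\v_i$ is irrational, $w_i=0$. Carrying this out for every $i$ finishes the proof.

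The hard part is this last step: making rigorous, via the Farey (continued-fraction) structure, that a rational slope $q$ taken close enough to $\sigma_i$ on a chosen side puts $\v_i$ in a cone of $\tau$'s cone family that is at once isolated from the other $\v_j$ and of extremal index, and verifying that the conjugated cone family and the base step of the telescoping behave exactly as for $\eta_{21}^B$. The remaining steps are either routine or direct repetitions of arguments already carried out for Theorem~\ref{torus null tangle}.
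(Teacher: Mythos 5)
Your first two reductions coincide with the paper's: the extension of Proposition~\ref{one positive or one negative} eliminates spiralling curves exactly as in Proposition~\ref{null tangle lines only}, and the telescoping argument of Theorem~\ref{torus null tangle} (which, as you correctly observe, never uses integrality of the vectors other than $\v$) eliminates the rational-slope curves $\lambda(\sigma)$. Up to that point the proposal is sound and matches the paper.

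The third step is where the proof actually has to do something new, and it is precisely there that your argument has a genuine gap --- the one you flag yourself. The assertion that a rational $q$ taken sufficiently close to $\sigma_i$ on a prescribed side puts $\v_i$ into a cone of the conjugated family $\{D_j\}$ that is simultaneously disjoint from the other $\v_j$ \emph{and} of extremal index is not a routine limiting statement: the cone family depends on $q$, and as $q\to\sigma_i$ both the index $j_0(q)$ of the cone containing $\v_i$ and the largest index $N(q)$ of a cone meeting another $\v_k$ go to infinity, so one must actually prove $j_0(q)>N(q)$ for suitable $q$. This is a Diophantine statement about the Farey structure, and it is exactly the content the paper isolates in Lemma~\ref{diophantine lemma} (for a fixed rational $q$, only finitely many pairs of Farey neighbors straddle $q$) and Lemma~\ref{tech lemma}. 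Without that input your plan is a reduction of the problem to its hardest sub-claim rather than a proof. A secondary, fixable point: even after isolating $\v_i$ in $D_0$ or $D_1$, the base step must be run in the coordinate whose sign singles out that one cone ($x<0$ for the interior of $D_{-1}\cup D_0$, $y>0$ for the interior of $D_0\cup D_1$), since each coordinate's sign condition covers \emph{two} adjacent cones; you allude to ``the appropriate coordinate,'' but this needs to be checked against which of $D_0$, $D_1$ the vector lands in.

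For comparison, the paper avoids twists entirely at this stage. It takes the two largest remaining slopes $\sigma_1<\sigma_2$ and uses Lemma~\ref{tech lemma} to produce a single Farey triangle, hence a triangulation $T$, with vertices of slopes $b/a<d/c<f/e$ satisfying $\sigma_1<\frac{b+d}{a+c}<\sigma_2<\frac{f}{e}$. Lemma~\ref{slope ineq lemma} then says that the curves with a strictly positive normalized shear coordinate at the arc of slope $f/e$ are exactly those with slope in the interval $\bigl(\frac{b+d}{a+c},\frac{f}{e}\bigr)$, which by construction contains only $\sigma_2$; the extended Proposition~\ref{one positive or one negative} kills its weight and gives the contradiction. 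If you want to salvage your twist-based route, you will in any case need to prove statements equivalent to Lemmas~\ref{diophantine lemma} and~\ref{tech lemma}, at which point the paper's direct separation argument is shorter.
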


To prove the theorem, we first prove several lemmas.

\begin{lemma}\label{slope ineq lemma}
Suppose $q_1<q_2<q_3$ are slopes with standard forms $b/a=q_1$, $d/c=q_2$, and $f/e=q_3$.
Suppose also that the corresponding arcs $\alpha_1$, $\alpha_2$, and $\alpha_3$ constitute a triangulation~$T$ of $(\S,p)$.
Then a curve $\lambda(\sigma)$ has $\overline{b}_{\alpha_3}(T,\lambda(\sigma))>0$ if and only if $\frac{b+d}{a+c}<\sigma<\frac{f}{e}$.
\end{lemma}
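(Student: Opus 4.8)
The plan is to reduce to the standard triangulation $T_0$ via the action of $\mathrm{SL}_2(\integers)$ on the once-punctured torus, and then to read off the sign of the relevant normalized shear coordinate from the explicit computations of Section~\ref{real sec}. I would begin by recording the Farey combinatorics: since $\alpha_1,\alpha_2,\alpha_3$ form a triangulation, Proposition~\ref{torus arcs compatible} shows the Farey points $P_1=[a,\,b]$, $P_2=[c,\,d]$, $P_3=[e,\,f]$ are pairwise Farey neighbors, and the inequalities $q_1<q_2<q_3$ fix the signs so that $ad-bc=cf-de=af-be=1$. Comparing $2\times 2$ determinants then forces $P_2=P_1+P_3$, so in particular $P_1+P_2=[a+c,\,b+d]$ is again a Farey point, whose slope $\tfrac{b+d}{a+c}$ is the mediant of $q_1$ and $q_2$ and therefore satisfies $q_1<\tfrac{b+d}{a+c}<q_2$.

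For the reduction, let $g\in\mathrm{SL}_2(\integers)$ be the unique matrix with $gP_1=[1,\,-1]$ and $gP_3=[0,\,1]$ (so $gP_2=gP_1+gP_3=[1,\,0]$); its determinant is $1$ because $\begin{bsmallmatrix}a&e\\b&f\end{bsmallmatrix}$ and $\begin{bsmallmatrix}1&0\\-1&1\end{bsmallmatrix}$ both have determinant $1$. Acting on $\reals^2$, $g$ descends to an orientation-preserving homeomorphism of $(\S,p)$ carrying $T$ to $T_0$ with $\alpha_1\mapsto\gamma_3$, $\alpha_2\mapsto\gamma_1$, $\alpha_3\mapsto\gamma_2$, and carrying $\lambda(\sigma)$ to $\lambda(g\cdot\sigma)$, where $g\cdot\sigma$ is the image of $\sigma$ under the associated fractional-linear map. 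Shear coordinates of a curve are unchanged when an orientation-preserving homeomorphism is applied at once to the curve and the triangulation, and the same holds for normalized shear coordinates: although $g$ is not an isometry, it takes an initial arc of $\lambda(\sigma)$ of length $d$ to an initial arc of $\lambda(g\cdot\sigma)$ whose shear coordinates are the same and whose length still tends to $\infty$, so the limiting unit vectors agree. Hence $\overline{b}_{\alpha_3}(T,\lambda(\sigma))=\overline{b}_{\gamma_2}(T_0,\lambda(g\cdot\sigma))$ for every slope $\sigma$.

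It then remains to determine the sign of the middle coordinate $\overline{b}_{\gamma_2}(T_0,\lambda(\tau))$ and to transport the answer back through $g^{-1}$. The sign is a direct bookkeeping from Section~\ref{real sec}, supplemented for the rational and nonpositive cases by the explicit shear coordinates of $\cl(\cdot,\cdot)$ and the two cyclic-permutation symmetries of Section~\ref{expl shear}: the normalized shear coordinate vector of $\lambda(\tau)$ with respect to $T_0$ is proportional to $[-\tau,\,1,\,\tau-1]$ for $\tau$ positive and finite, equals $\tfrac1{\sqrt2}[-1,\,0,\,1]$ for $\tau=\infty$, is proportional to $[0,\,1,\,-1]$ for $\tau=0$, and for negative $\tau$ is a cyclic permutation of the positive case. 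Carrying this out yields $\overline{b}_{\gamma_2}(T_0,\lambda(\tau))>0$ exactly when $\tau$ lies on the open arc of $\mathbb{RP}^1$ running from $-\tfrac12$ to $\infty$ through $0$, that is, exactly when $-\tfrac12<\tau<\infty$. Since $g$ acts on $\mathbb{RP}^1$ through $\mathrm{PSL}_2(\integers)$, hence as an orientation-preserving self-homeomorphism, and $g^{-1}$ sends the slope $\infty$ to $q_3=\tfrac fe$, sends the slope $-\tfrac12$ (the slope of $gP_1+gP_2=[2,\,-1]$) to the slope of $P_1+P_2$, namely $\tfrac{b+d}{a+c}$, and sends the slope $0$ (interior to the arc) to $q_2=\tfrac dc$, it carries the arc $(-\tfrac12,\infty)$ onto the arc from $\tfrac{b+d}{a+c}$ to $\tfrac fe$ through $q_2$. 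Because $\tfrac{b+d}{a+c}<q_2<q_3$, that arc is precisely the interval $\tfrac{b+d}{a+c}<\sigma<\tfrac fe$ of the statement, which proves the lemma.

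I expect the main obstacle to be the sign bookkeeping for $\overline{b}_{\gamma_2}(T_0,\lambda(\tau))$ over all slopes $\tau$: one must chase the two symmetry regions for negative $\tau$ and, in particular, confirm the boundary behavior (the coordinate vanishes at $\tau=\infty$ and at $\tau=-\tfrac12$, so those slopes are correctly excluded). A secondary point to watch is the invariance of normalized shear coordinates under the non-isometric map $g$ and keeping track of orientations, so that $g^{-1}$ is seen to select the correct one of the two complementary arcs of $\mathbb{RP}^1$.
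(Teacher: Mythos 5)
Your proof is correct, but it takes a genuinely different route from the paper's. The paper argues directly in the universal cover for the arbitrary triangulation $T$: it draws the lifts of $\alpha_1,\alpha_2,\alpha_3$ forming the quadrilateral with diagonal $\alpha_3$, observes that the other diagonal has slope $\tfrac{b+d}{a+c}$, reads off necessity of $\tfrac{b+d}{a+c}<\sigma<\tfrac fe$ from the picture, and gets sufficiency from the existence of one well-placed line of slope $\sigma$ together with the independence of normalized shear coordinates from the choice of line. You instead reduce to the base triangulation $T_0$ via the orientation-preserving element $g\in\mathrm{SL}_2(\integers)$ determined by the Farey triple, and then read the sign of the $\gamma_2$-coordinate off the explicit formulas of Section~\ref{real sec}. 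Your supporting steps all check out: the mediant identity $P_2=P_1+P_3$ does follow from the three determinant conditions $ad-bc=cf-de=af-be=1$ of Proposition~\ref{torus arcs compatible}; the equivariance of normalized shear coordinates under simultaneous application of $g$ is legitimate (the combinatorial contributions are preserved and the image segments exhaust the curve, so the normalized limits agree); and the sign bookkeeping $\overline{b}_{\gamma_2}(T_0,\lambda(\tau))>0\iff -\tfrac12<\tau<\infty$ agrees with the cyclic-permutation symmetries of Section~\ref{expl shear} (with vanishing exactly at $\tau=-\tfrac12$ and $\tau=\infty$), after which the $\mathbb{RP}^1$ arc-chasing via the interior point $0\mapsto q_2$ correctly selects the interval $(\tfrac{b+d}{a+c},\tfrac fe)$. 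What your approach buys is an explicit, uniform treatment of both implications, including the "if" direction that the paper disposes of somewhat tersely; what it costs is the extra apparatus (mediant identity, equivariance, arc-chasing) in place of the paper's single picture.
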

\begin{proof}
The situation is illustrated in Figure~\ref{slopeineqfig}.
Two preimages of $\alpha_1$ are drawn in red, two preimages of $\alpha_2$ are drawn in blue, and one preimage of $\alpha_3$ is drawn in purple.
\begin{figure}[ht]
\scalebox{.9}{\includegraphics{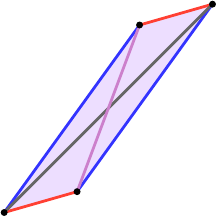}}
\caption{Illustration for the proof of Lemma~\ref{slope ineq lemma}}
\label{slopeineqfig}
\end{figure}
The arc associated to $\frac{b+d}{a+c}$ is drawn gray.
(For readers viewing the article in black and white:  The purple arc is the shorter of the two diagonals of the parallelogram.)
It is immediate that $\frac{b+d}{a+c}<\sigma<\frac{f}{e}$ is a necessary condition for $\overline{b}_{\alpha_3}(T,\lambda(\sigma))$ to be positive.
On the other hand, if $\frac{b+d}{a+c}<\sigma<\frac{f}{e}$, then there exists a line with slope $\sigma$ that intersects the parallelogram in such a way as to have a positive normalized shear coordinate.
The normalized shear coordinates depend only on $\sigma$, so $\overline{b}_{\alpha_3}(T,\lambda(\sigma))>0$.
\end{proof}

\begin{lemma}\label{diophantine lemma}
Given a rational number $q$, there are at most finitely many pairs of Farey points $[a,\,b]$ and $[c,\,d]$ that are Farey neighbors and that have $b/a<q<d/c$.
\end{lemma}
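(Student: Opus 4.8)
The plan is to convert the hypothesis into a Diophantine bound on the ``denominators'' $a$ and $c$, and then on the ``numerators'' $b$ and $d$, in terms of the denominator of $q$. First I would write $q=s/r$ in lowest terms with $r\ge 1$. Reading the chain $b/a<q<d/c$ as inequalities of real numbers forces $a\neq 0$ and $c\neq 0$. Next I would normalize signs: simultaneously negating the vectors $[a,\,b]$ and $[c,\,d]$ preserves both slopes and all three defining conditions of a Farey-neighbor pair, so I may assume $a>0$, and then the requirement that $a$ and $c$ have weakly the same sign forces $c>0$.

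With $a,c,r>0$ in hand, I would clear denominators. Multiplying $b/a<s/r$ by $ar$ and $s/r<d/c$ by $cr$ gives the integer inequalities $sa-rb\ge 1$ and $rd-sc\ge 1$, and multiplying $b/a<d/c$ by $ac$ gives $ad-bc\ge 1$, hence $ad-bc=1$ (it is $\pm1$). The crux is the identity
\[
c\,(sa-rb)+a\,(rd-sc)=r\,(ad-bc)=r,
\]
which follows by direct expansion. Since $a,c\ge 1$, $sa-rb\ge 1$, and $rd-sc\ge 1$, both summands on the left are positive integers, so the identity yields $a+c\le r$, and also $c\,(sa-rb)=r-a\,(rd-sc)\le r-1$, hence $sa-rb\le r-1$, and likewise $rd-sc\le r-1$.

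To finish, note that each of $a$, $c$, $sa-rb$, $rd-sc$ lies in $\{1,\dots,r-1\}$, so there are finitely many choices for all four; then $b$ is recovered from $a$ and $sa-rb$ via $rb=sa-(sa-rb)$, and $d$ from $c$ and $rd-sc$. Thus there are finitely many pairs $[a,\,b],\,[c,\,d]$ with $a,c>0$, and undoing the sign normalization at most doubles the count. The argument is short once the displayed identity is at hand; the only point needing real care is the sign bookkeeping of the first paragraph — reading $b/a<q<d/c$ as a chain of real inequalities (so that $a,c\neq 0$) and then using the ``weakly the same sign'' clauses in the definition of Farey neighbors to reduce to the case $a,c>0$.
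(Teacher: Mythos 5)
Your core argument for the case $a,c>0$ is correct and is a cleaner execution of the same underlying strategy as the paper's: both proofs use $ad-bc=1$ to bound the denominators by the denominator $r$ of $q$. Your Bezout-type identity $c(sa-rb)+a(rd-sc)=r(ad-bc)=r$ gives the sharp bound $a+c\le r$ in one stroke, and then recovers $b$ and $d$ uniquely; the paper instead argues somewhat more ad hoc that a multiple of $1/r$ cannot lie strictly between $d-1/a$ and $d$ once $a>r$. For this case your write-up is, if anything, preferable.

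There is, however, one genuine gap: the opening claim that $b/a<q<d/c$ forces $a\neq0$ and $c\neq0$. In this paper's conventions the Farey points $[0,\,1]$ and $[0,\,-1]$ carry the infinite slope, ordered so that $-1/0=-\infty<q$ and $q<+\infty=1/0$, and the lemma is applied (in the proof of Lemma~\ref{tech lemma}) to edges of Farey triangles that can be incident to these points. For example, the pairs $[1,\,n]$ and $[0,\,1]$ for $n\ge0$ are Farey neighbors straddling any positive rational $q$ in this sense, and they must be accounted for; discarding them would break the count of Farey triangles in Lemma~\ref{tech lemma}. The fix is short, and is exactly what the paper does: if $a=0$, then $[a,\,b]=[0,\,-1]$, so $ad-bc=\pm1$ forces $c=\pm1$ (and $c=1$ after your sign normalization), while weak sign agreement of $d$ with $b=-1$ gives $d\le0$, and there are only finitely many integers $d\le0$ with $d>q$; the case $c=0$ is symmetric. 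With that two-line addendum your proof is complete.
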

\begin{proof}
Suppose $[a,\,b]$ and $[c,\,d]$ are Farey neighbors with $b/a<q<d/c$.
In particular, $a$ and $c$ weakly agree in sign, so it is enough to show that there are finitely many possibilities with $a\ge0$ and $c\ge0$.
Proposition~\ref{torus arcs compatible} says that $ad-bc=1$.

If $a=0$, then since $b/a<q$, we have $[a,\,b]=[0,\,-1]$.
Thus $ad-bc=c=1$ and $d\le 0$.
There are at most finitely many integers $d\le 0$ such that $d>q$.
Similarly, if $c=0$ then $[c,\,d]=[0,\,1]$, so that $a=1$ and $b\ge 0$.
There are finitely many integers $b\ge0$ such that $b<q$.

Now suppose $a>0$ and $c>0$.
Rewrite the identity $ad-bc=1$ as $b/a=d/c-1/(ac)$, so that $d/c-1/(ac)<q<d/c$ and multiply through by $c$ to obtain $d-1/a<cq<d$.
For $a$ large enough, there are no multiples of $q$ strictly between an integer $d$ and the quantity $d-1/a$, because $q$ is rational.
Thus there are only finitely many possible values of $a$.

Now use the identity $b/a=d/c-1/(ac)$ to rewrite the inequality $b/a<q<d/c$ as $b/a<q<b/a+1/(ac)$ and multiply through by $a$ to obtain $b<aq<b+1/c$.
We have $1/c\le1$, and so for each fixed $a$ there is at most one $b$ such that $b<aq<b+1/c$.
We have shown that there are only finitely many possible points $[a,\,b]$.

The symmetric argument shows that there are at most finitely many points $[c,\,d]$.
Specifically, for large enough $c$, the inequality $b<aq<b+1/c$ cannot be satisfied.
Also, for fixed $c$, there is at most one $d$ satisfying $d-1/a<cq<d$.
\end{proof}

\begin{lemma}\label{tech lemma}
Given real numbers $x<y$ with $y$ irrational, there exists a Farey triangle with vertices $[a,\,b]$, $[c,\,d]$, and $[e,\,f]$ having $b/a<d/c<f/e$ such that $x<\frac{b+d}{a+c}<y<\frac{f}{e}$.
\end{lemma}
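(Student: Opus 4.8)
The plan is to prove the lemma by a Stern--Brocot (continued-fraction) descent toward $y$. After applying the slope-shift $[a,\,b]\mapsto[a,\,b+a]$ finitely many times --- a bijection that preserves Farey points, triangles, and rays and that sends the mediant $[a+c,\,b+d]$ of two points to the mediant of their images --- I may assume $0<x<y$, so that the infinite slope never arises as a vertex and every mediant in the argument is an ordinary Farey mediant of fractions with positive denominators.

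First I would build a shrinking nested sequence of Farey intervals straddling $y$. Start from the Farey triangle $\set{[1,\,\lfloor y\rfloor],\,[1,\,\lfloor y\rfloor+1],\,[2,\,2\lfloor y\rfloor+1]}$, whose two extreme vertex slopes are consecutive integers $u_0<v_0$ with $u_0<y<v_0$ and whose middle vertex has slope $w_0=\mathrm{med}(u_0,v_0)$. Inductively, suppose I have a Farey triangle whose extreme vertex slopes $u_k<v_k$ are Farey neighbors with $u_k<y<v_k$ and whose middle vertex slope is $w_k=\mathrm{med}(u_k,v_k)$. Since $y$ is irrational, $y\neq w_k$; put $(u_{k+1},v_{k+1})=(u_k,w_k)$ (``go left'') if $y<w_k$ and $(u_{k+1},v_{k+1})=(w_k,v_k)$ (``go right'') if $y>w_k$, and set $w_{k+1}=\mathrm{med}(u_{k+1},v_{k+1})$. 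A routine check --- the mediant of two Farey neighbors is a Farey neighbor of each (cross product $\pm1$), hence is a Farey point by the observation in the proof of Lemma~\ref{Farey neighbor}, and the three resulting points are pairwise Farey neighbors, hence noncollinear by Lemma~\ref{Farey tri nondegenerate} --- shows the points of slopes $u_{k+1},w_{k+1},v_{k+1}$ again form a Farey triangle, and that $u_k<y<v_k$ is maintained.

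Next I would record the two facts about this descent that I need: (i) infinitely many steps are ``go right''; and (ii) $u_k\to y$. For (i): if every step from some point on were ``go left'', then $u_k$ would freeze at a value $u^*<y$ while $v_{k+1}=\mathrm{med}(u^*,v_k)$; since $v_k$ and $u^*$ stay Farey neighbors, $v_k-u^*=\pm 1/(Q^*S_k)$ where $Q^*$ is the (fixed) denominator of $u^*$ and $S_k$ is the denominator of $v_k$, which grows linearly, so $v_k\searrow u^*$, contradicting $v_k>y>u^*$. The symmetric argument shows infinitely many steps are ``go left''. For (ii): at each step exactly one of the two denominators of $u_k,v_k$ is replaced by the sum of the two, so by (i) and its mirror both denominators tend to $\infty$; hence $v_k-u_k=\pm 1/(\text{product of those denominators})\to 0$, and since $u_k\nearrow$, $v_k\searrow$, and $u_k<y<v_k$, this forces $u_k\to y$ (and $v_k\to y$).

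Finally, pick $k$ to be a ``go right'' step large enough that $u_k>x$, possible by (i) and (ii). Let $[a,\,b]$, $[c,\,d]$, $[e,\,f]$ be the standard Farey points of slopes $q_1=u_k<q_2=w_k<q_3=v_k$; these are the vertices of the Farey triangle at step $k$. Then $q_3=v_k>y$, while $q_2=w_k<y$ because the step is ``go right''. Since $[a,\,b]$ and $[c,\,d]$ are Farey neighbors, $[a+c,\,b+d]$ is a Farey point whose slope $\frac{b+d}{a+c}$ lies strictly between $q_1$ and $q_2$; thus $x<u_k<\frac{b+d}{a+c}<w_k<y<\frac fe$, which is exactly the assertion. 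I expect the only mildly delicate point to be (i)--(ii), namely verifying that the descent genuinely converges to $y$ rather than stalling near one endpoint --- and this is precisely where the irrationality of $y$ is used.
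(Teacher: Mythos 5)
Your proof is correct, but it takes a genuinely different route from the paper's. The paper also walks along a sequence of Farey triangles whose vertex slopes converge to $y$ from both sides, but it produces that sequence geometrically: it follows the ray $s[1,\,y]$ as $s$ increases, invoking (the symmetric form of) Lemma~\ref{Farey decomp} together with the irrationality of $y$ to place each point $s[1,\,y]$ in a Farey triangle, and then uses Lemma~\ref{diophantine lemma} twice --- once to force all vertex slopes above a rational $q\in(x,y)$ for $s$ large, and once to find a step at which the triangle drops its lowest vertex while keeping the other two. Your Stern--Brocot descent constructs the triangles explicitly and replaces both appeals to Lemma~\ref{diophantine lemma} (and the appeal to Lemma~\ref{Farey decomp}) by the elementary identity $|u_k-v_k|=1/(A_kC_k)$ for Farey neighbors $[A_k,\,B_k]$ and $[C_k,\,D_k]$; the resulting argument is self-contained and arguably more elementary, at the cost of having to verify by hand that the descent yields genuine Farey triangles and does not stall near one endpoint, which you do correctly (and this is indeed exactly where irrationality enters, in both proofs). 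The final selections also match in spirit: your ``go right'' step plays the role of the paper's step at which $\Delta_{k+1}$ shares its two upper vertices with $\Delta_k$, and both proofs then take the mediant of the two lower vertices.

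One small point to tighten: the shear $[a,\,b]\mapsto[a,\,b+a]$ is not literally a bijection on Farey neighbors under the paper's definition, because of the sign conditions at the points $[0,\,\pm1]$ (for instance, $[0,\,1]$ and $[1,\,-1]$ are not Farey neighbors, but their images $[0,\,1]$ and $[1,\,0]$ are). This is harmless for your argument: every vertex you construct has strictly positive first coordinate, and for two such points the sign condition on the second coordinates is automatic given $ad-bc=\pm1$ (strictly opposite signs would force $|ad-bc|\ge 2$), so the inverse shear does carry your triangle back to a Farey triangle for the original $x$ and $y$. It would be worth saying this explicitly rather than asserting that the shear preserves Farey triangles in general.
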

\begin{proof}
We will find $s>0$ such that the vector $s[1,\,y]$ is in such a Farey triangle.
Since $y$ is irrational, no vector $s[1,\,y]$ is in a Farey ray.
The symmetric statement to Lemma~\ref{Farey decomp} (switching $a$ and $b$) implies that for $s>1$, the vector $s[1,\,y]$ is in some Farey triangle.
Choose a rational number $q$ strictly between $x$ and $y$.
Lemma~\ref{diophantine lemma} implies that there are only finitely many Farey triangles having a vertex whose corresponding slope is less than $q$ \emph{and} having a vertex whose corresponding slope is greater than $q$.
Thus there is some number $s_{\lim}>1$ such that for $s>s_{\lim}$, the vector $s[1,\,y]$ is in some Farey triangle all of whose vertices correspond to slopes greater than or equal to $q$.

Let $\Delta_0$ be some Farey triangle containing $s_0[1,\,y]$ for some $s_0>s_{\lim}$.
As $s$ increases from $s_0$, the point $s[1,\,y]$ passes through a sequence $\Delta_0,\Delta_1,\Delta_2,\ldots$ of Farey triangles.
We claim that there exists $k>0$ such that, writing $[a,\,b]$, $[c,\,d]$, and $[e,\,f]$ for the vertices of  $\Delta_k$ with $b/a<d/c<f/e$, the triangle $\Delta_{k+1}$ shares the vertices $[c,\,d]$ and $[e,\,f]$ with $\Delta_k$.
If not, then for any $k\ge0$, the vector $s[1,\,y]$ passes through infinitely many edges defined by Farey neighbors one of which is $[a,\,b]$ and the other of which corresponds to a slope greater than $y$.
This gives a contradiction to Lemma~\ref{diophantine lemma} for any rational number $q'$ with $b/a<q'<y$.
This contradiction proves the claim.

For $\Delta_k$ as in the claim, we have $b/a<d/c<y<f/e$.
Also, by construction $x<q\le b/a$.
We can complete the proof by showing that $b/a<(b+d)/(a+c)<d/c$.
Since $x<b/a$, we have $[a,\,b]\neq[0,\,-1]$, and since $d/c>f/e$, we also have $[c,\,d]\neq[0,\,1]$.
Thus both $a$ and $c$ are positive, so the inequality $b/a<(b+d)/(a+c)$ follows easily from $b/a<d/c$.
Similarly, $(b+d)/(a+c)<d/c$.
\end{proof}

\begin{proof}[Proof of Theorem~\ref{realnulltangle}]
Let $\Xi$ be a real null tangle that is not trivial.
The proof of Proposition~\ref{null tangle lines only} shows that $\Xi$ is supported on curves of the form $\lambda(\sigma)$ (using the extension of Proposition~\ref{one positive or one negative} to real tangles).
The remainder of the proof of Theorem~\ref{torus null tangle} shows that $\Xi$ is supported on curves $\lambda(\sigma)$ for $\sigma$ irrational (using the extension of Proposition~\ref{null tangle is B coher rel} to real tangles).
The support of $\Xi$ has at least two curves.
Let $\sigma_1$ and $\sigma_2$ be the largest two numbers indexing curves in the support of $\Xi$, with $\sigma_1<\sigma_2$.
Lemma~\ref{tech lemma} says that there exists a Farey triangle with vertices $[a,\,b]$, $[c,\,d]$, and $[e,\,f]$ with $b/a<d/c<f/e$ such that $\sigma_1<\frac{b+d}{a+c}<\sigma_2<\frac{f}{e}$.
By Propositions~\ref{torus arcs} and~\ref{torus arcs compatible}, the arcs given by slopes $b/a$, $d/c$, and $f/e$ form a triangulation $T$ of $(\S,p)$.
Let $\gamma$ be the arc associated to $f/e$.
Lemma~\ref{slope ineq lemma} and our choice of $\sigma_1$ and $\sigma_2$ imply that $\lambda(\sigma_1)$ is the unique curve in the support of $\Xi$ whose normalized shear coordinates with respect to $T$ have a positive entry in the position indexed by $\gamma$.
By the extension of Proposition~\ref{one positive or one negative}, we conclude that $\lambda(\sigma_1)$ appears with coefficient $0$ in $\Xi$, and this contradiction proves the theorem.
\end{proof}

%SinceSLC:  Anything I add about denominator vectors comes after SLC submission!

\section{Denominator vectors}\label{denom sec}
We close with an aside about denominator vectors ($\d$-vectors).
Denominator vectors for marked surfaces are given by intersection numbers, as described in \cite[Theorem~8.6]{cats1}.
These intersection numbers are easy to compute in the case of the once-punctured torus. 
Specifically, Proposition~\ref{torus arcs} combines with \cite[Theorem~8.6]{cats1} to prove the following proposition, which was already pointed out in \cite[Example~2.18]{NS} with the same proof.

\begin{prop}\label{markov d}
Let $B=\begin{bsmallmatrix*}[r]0&2&-2\\-2&0&2\\2&-2&0\\\end{bsmallmatrix*}$.
Then the $\d$-vectors of cluster variables associated to $B$ are the cyclic permutations of vectors $[a-1,b-1,a+b-1]$, for all (possibly infinite) positive rational slopes with standard form $b/a$.
\end{prop}
%In particular, all of the $\d$-vectors are contained in the hypersurface defined by the symmetric polynomial
%\begin{multline*}
%(x^3+y^3+z^3)-(x^2y+x^2z+xy^2+xz^2+y^2z+yz^2)+2xyz\\+(x^2+y^2+z^2)-2(xy+xz+yz)-(x+y+z)-1.
%\end{multline*}
% BUT THAT'S OBVIOUS because this polynomial is the product of the linear forms defining the three obvious planes!
Proposition~\ref{torus arcs compatible} allows us to picture the $\d$-vector fan as well.
Two stereographic views are shown in Figure~\ref{denoms}.
\begin{figure}[p]
\scalebox{0.97}{\includegraphics{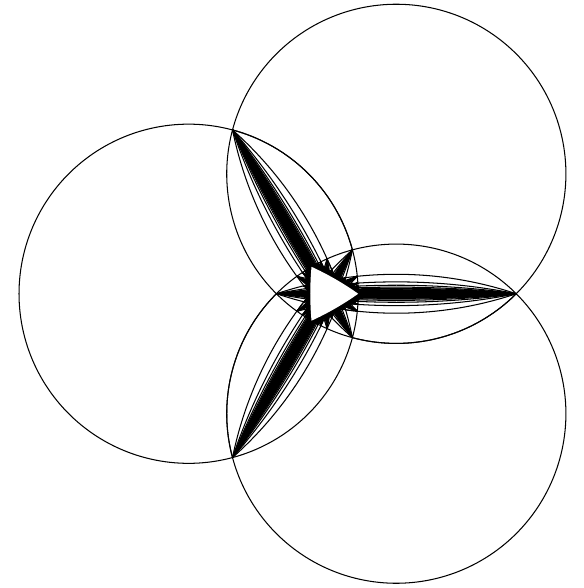}
\begin{picture}(0,0)(143,-141)
\put(30,-29){\small$\e_2$}
\put(30,28){\small$\e_3$}
\put(-21,0){\small$\e_1$}
\end{picture}}

\scalebox{0.99}{\includegraphics{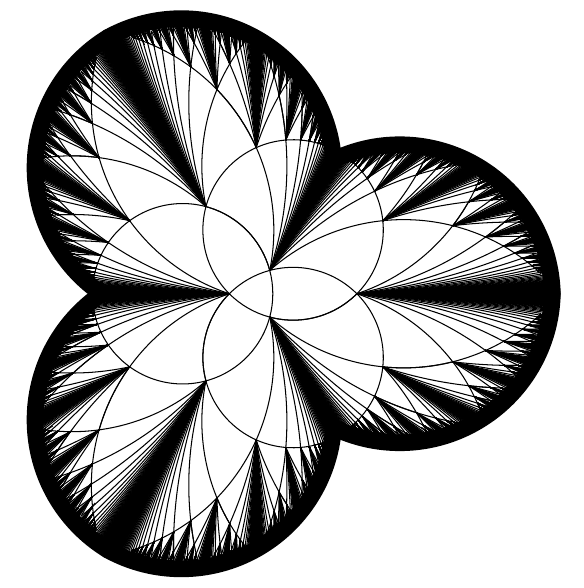}
\begin{picture}(0,0)(138,-141)
\put(-46,33){\small$\e_2$}
\put(-46,-34){\small$\e_3$}
\put(12,-1){\small$\e_1$}
\end{picture}}
\caption{Two views of the $\d$-vector fan for the once-puntured torus}
\label{denoms}
\end{figure}

\section{Extensions}\label{ext sec}
A natural question is to what extent the results of this paper can be extended to other surfaces or to other cluster algebras.
One would expect that the natural extension is to once-punctured surfaces of higher genus.
But it is difficult to see how to proceed in higher genus.
The key to the results for the torus is the fact that curves are indexed by rational slopes and that compatibility of curves is described by the Farey condition.
In higher genus, curves and compatibility appear to be much more complicated.

However, there is another surface where rational slopes and Farey-type conditions describe curves and compatibility:  the four-punctured sphere.
In~\cite{unisphere}, Barnard, Meehan, Polster, and the author extend the results of this paper to that surface.
Some of the preliminary results from \cite{unisphere} are already present in \cite[Section~5]{tubular1}.
Indeed, the four-punctured sphere is the simplest of the 4 \newword{tubular cluster algebras} discussed in \cite{tubular1,tubular2} and is the only one of the 4 associated to a surface.
Given the prominence of rational slopes and the Farey condition in \cite{tubular1,tubular2}, it seems plausible that the results of this paper (except as they pertain to marked surfaces) extend in some form to the other 3 tubular cluster algebras as well.
It also seems reasonable to wonder to what extent the once-punctured torus itself can be treated as a tubular cluster algebra.

\subsection*{Acknowledgments}
Thanks to Emily Barnard, Emily Meehan, Shira Polster, Salvatore Stella, and two anonymous referees for making valuable corrections to an earlier version.

%\addtocontents{toc}{\mbox{ }}

\newpage

\end{document}